\newcommand{\cl}{c\ell}
\newcommand{\emp}{\emptyset}
\newcommand{\ben}{\mathbb N}
\newcommand{\xtotwo}[1]{{}^{\hbox{$#1$}}\!\{0,1\}}
\newcommand{\nhat}[1]{\{1,2,\ldots,#1\}}
\newcommand{\ohat}[1]{\{0,1,\ldots,#1\}}
\newcommand{\pf}{{\mathcal P}_f}
\newcommand{\cchi}{\raise 2 pt \hbox{$\chi$}}
\newcommand{\wdd}{\widetilde}
\newtheorem{theorem}{Theorem}[section]
\newtheorem{corollary}[theorem]{Corollary}
\newtheorem{lemma}[theorem]{Lemma}
\theoremstyle{definition}
\newtheorem{definition}[theorem]{Definition}
\title{Recurrence in the dynamical system $(X,\langle T_s\rangle_{s\in S})$
and ideals of $\beta S$}
\date{}
\author{Neil Hindman
        \footnote{Department of Mathematics,
                 Howard University,
                  Washington, DC 20059, USA.\hfill\break
                  {\tt nhindman@aol.com}}
        \thanks{This author acknowledges support received from the National
                Science Foundation (USA) via Grant DMS-1460023.}
\and
Dona Strauss
        \footnote{Department of Pure Mathematics,
        University of Leeds,
        Leeds LS2 9J2, UK. \hfill\break
        {\tt d.strauss@hull.ac.uk}}
\and
Luca Q. Zamboni
        \footnote{Universit\'e de Lyon,
Universit\'e Lyon 1, CNRS UMR 5208,
Institut Camille Jordan,
43 boulevard du 11 novembre 1918,
F69622 Villeurbanne Cedex, France\hfill\break
        {\tt zamboni@math.univ-lyon1.fr}}
}
\begin{document}

\maketitle

\begin{abstract} 
A {\it dynamical system\/} is a pair $(X,\langle T_s\rangle_{s\in S})$, where 
$X$ is a compact Hausdorff space, $S$ is a semigroup, for each $s\in S$,
$T_s$ is a continuous function from $X$ to $X$, and for all $s,t\in S$, $T_s\circ T_t=T_{st}$.
Given a point $p\in\beta S$, the Stone-\v Cech compactification of the discrete space $S$,
$T_p:X\to X$ is defined by, for $x\in X$, $\displaystyle T_p(x)=p{-}\!\lim_{s\in S}T_s(x)$.
We let $\beta S$ have the operation extending the operation of $S$ such that
$\beta S$ is a right topological semigroup and multiplication on the left by 
any point of $S$ is continuous.
Given $p,q\in\beta S$, $T_p\circ T_q=T_{pq}$, but $T_p$ is usually not continuous.
Given a dynamical system $(X,\langle T_s\rangle_{s\in S})$, and a point
$x\in X$, we let $U(x)=\{p\in\beta S:T_p(x)$ is uniformly recurrent$\}$. We
show that each $U(x)$ is a left ideal of $\beta S$ and 
for any semigroup we can get a dynamical system with respect
to which $K(\beta S)=\bigcap_{x\in X}U(x)$ and
$\cl K(\beta S)=\bigcap\{U(x):x\in X$ and $U(x)$ is closed$\}$. And we show 
that weak cancellation assumptions guarantee that each such $U(x)$ properly contains
$K(\beta S)$ and has $U(x)\setminus\cl K(\beta S)\neq \emp$.
\end{abstract}

\section{Introduction}

We take the Stone-\v Cech compactification of a discrete semigroup
$(S,\cdot)$ to be the set of ultrafilters on $S$, identifying the points of
$S$ with the principal ultrafilters.  Given $A\subseteq S$,
we set $\overline A=\{p\in\beta S:A\in p\}$. The set $\{\overline A:A\subseteq S\}$
is a basis for the open sets and a basis for the closed sets of $\beta S$.
The operation on $S$ extends uniquely to $\beta S$ so that $(\beta S,\cdot)$ is
a right topological semigroup with $S$ contained in its topological center, 
meaning that $\rho_p$ is continuous for each $p\in\beta S$ and $\lambda_x$ is continuous
for each $x\in S$, where for $q\in\beta S$, $\rho_p(q)=q\cdot p$ and $\lambda_x(q)=x\cdot q$.
So, for every $p,q\in\beta S$, $pq=\displaystyle \lim_{s\to p}\lim_{t\to q}st$, where $s$ and $t$
denote elements of $S$.
If $A\subseteq S$, $A\in p\cdot q$
if and only if $\{x\in S:x^{-1}A\in q\}\in p$, where $x^{-1}A=\{y\in S:xy\in A\}$.  (We are following the 
custom of frequently writing $xy$ for $x\cdot y$.)

The algebraic structure of $\beta S$ is interesting in its own right, and has had 
substantial applications, especially to that part of combinatorics known as
{\it Ramsey Theory\/}. See the book \cite{HS} for an elementary introduction to the
structure of $\beta S$ and its applications.

We are concerned in this paper with the relationship between the algebraic structure of
$\beta S$ and recurrence in {\it dynamical systems\/}.

\begin{definition}\label{defdyn}
A {\it dynamical system\/} is a pair 
$(X,\langle T_s\rangle_{s\in S})$ such that
\begin{itemize}
\item[(1)] $X$ is a compact Hausdorff topological 
space (called the {\it phase space\/}
of the system);
\item[(2)] $S$ is a semigroup; 
\item[(3)] for each $s\in S$, $T_s$ is a continuous function from $X$
to $X$; and 
\item[(4)] for all $s,t\in S$, $T_s\circ T_t=T_{st}$.
\end{itemize}
\end{definition}

Associated with any semigroup $S$ are at least two interesting
dynamical systems, namely $(\beta S,\langle \lambda_s\rangle_{s\in S}\rangle)$, 
and $(\xtotwo{S},\langle T_s\rangle_{s\in S})$ where $\xtotwo{S}$ is the
set of all functions from $S$ to $\{0,1\}$ with the product topology and $T_s(x)=x\circ\rho_s$.
(We shall verify that this latter example is a dynamical system shortly.)

It is common to assume that the phase space of a dynamical system
is a metric space, but we make no such assumption.  
If $S$ is infinite, then $\beta S$ is not a metric space. Everything we 
do here is boring if $S$ is finite so whenever we write
``let $S$ be a semigroup" we shall assume that $S$ is infinite.  The
interested reader can amuse herself by determining which of our results
remain valid if that assumption is dropped. 

The system $(\beta S,\langle \lambda_s\rangle_{s\in S}\rangle)$ has significant
general properties as can be seen in \cite[Section 19.1]{HS}, but will 
not be used much in this paper. 

Given a product space $\xtotwo{S}$, recall that the product topology has
a subbasis consisting of sets of the form $\pi_{t}^{-1}[\{a\}]$ for
$t\in S$ and $a\in\{0,1\}$, where, for $x\in\xtotwo{S}$, $\pi_t(x)=x(t)$.

\begin{lemma}\label{stotwodyn} Let $R$ be a semigroup and let $S$ be a subsemigroup
of $R$.  Let $Z=\xtotwo{R}$, the set of all functions from $R$ to $\{0,1\}$ with
the product topology. For $x\in Z$ and $s\in S$, define $T_s(x)=x\circ\rho_s$.
Then $(Z,\langle T_s\rangle_{s\in S})$ is a dynamical system.
\end{lemma}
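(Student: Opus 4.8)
The plan is to verify, one at a time, the four conditions of Definition~\ref{defdyn} for the pair $(Z,\langle T_s\rangle_{s\in S})$. Conditions (1) and (2) cost almost nothing: $\{0,1\}$ is a finite discrete space, hence compact Hausdorff, so by Tychonoff's theorem the product $Z=\xtotwo{R}$ is compact, and an arbitrary product of Hausdorff spaces is Hausdorff; and $S$ is a semigroup by hypothesis. Before checking (3) and (4) I would first record that each $T_s$ really does send $Z$ into $Z$: since $s\in S\subseteq R$ and $R$ is a semigroup, $\rho_s\colon R\to R$ is well defined by $\rho_s(t)=ts$, so for $x\in Z$ the composite $x\circ\rho_s$ is again a function from $R$ to $\{0,1\}$, i.e.\ an element of $Z$.

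For condition (3) I would exploit the product topology directly rather than attempt any metric argument, which is unavailable since $Z$ need not be metrizable. A function into a product is continuous if and only if its composition with each coordinate projection is continuous, so it suffices to show that $\pi_t\circ T_s$ is continuous for every $t\in R$. Fixing $s\in S$ and $t\in R$, for $x\in Z$ we compute
\[
(\pi_t\circ T_s)(x)=T_s(x)(t)=(x\circ\rho_s)(t)=x(ts)=\pi_{ts}(x),
\]
so that $\pi_t\circ T_s=\pi_{ts}$. Since $\pi_{ts}$ is a coordinate projection it is continuous, and therefore $T_s$ is continuous. Equivalently, the preimage under $T_s$ of a subbasic open set $\pi_t^{-1}[\{a\}]$ equals $\pi_{ts}^{-1}[\{a\}]$, which is subbasic open.

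Finally, for condition (4) I would verify $T_s\circ T_t=T_{st}$ by a short direct computation, being careful with the order of composition. For $x\in Z$,
\[
(T_s\circ T_t)(x)=T_s(x\circ\rho_t)=(x\circ\rho_t)\circ\rho_s=x\circ(\rho_t\circ\rho_s),
\]
and for $u\in R$ one has $(\rho_t\circ\rho_s)(u)=\rho_t(us)=(us)t=u(st)=\rho_{st}(u)$, where the crucial middle step is associativity in $R$. Hence $\rho_t\circ\rho_s=\rho_{st}$ and $(T_s\circ T_t)(x)=x\circ\rho_{st}=T_{st}(x)$, as required.

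There is no genuine obstacle here; the only point demanding attention is the index bookkeeping in the last step. Because $T_s$ acts by \emph{pre}-composition with $\rho_s$, the assignment $s\mapsto\rho_s$ is an anti-homomorphism ($\rho_{st}=\rho_t\circ\rho_s$), and composing the maps $T_s$ reverses this order once more, so that $s\mapsto T_s$ comes out as a genuine (covariant) homomorphism with $T_s\circ T_t=T_{st}$ rather than $T_{ts}$. Keeping this double reversal straight is the one place where a sign-type error could creep in.
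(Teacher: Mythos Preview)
Your argument is correct and follows essentially the same approach as the paper: the key computation is $\pi_t\circ T_s=\pi_{ts}$ (equivalently, $T_s^{-1}[\pi_t^{-1}[\{a\}]]=\pi_{ts}^{-1}[\{a\}]$), which gives continuity, while $T_s\circ T_t=T_{st}$ is declared routine in the paper and you have written it out in full. Your write-up is simply a more detailed version of the paper's brief proof.
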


\begin{proof} 
It is routine to verify that for $s,t\in S$, $T_s\circ T_t=T_{st}$.
To see that $T_s$ is continuous for each $s\in S$, let $s\in S$ be given.
It suffices to show that the inverse image of each subbasic open set 
is open, so let $t\in R$ and $a\in\{0,1\}$ be given.
Then $T_s^{-1}\big[\pi_t^{-1}[\{a\}]\big]=\pi_{ts}^{-1}[\{a\}]$.\end{proof}

Recall that, if $T$ is any discrete space, $p\in\beta T$, 
$\langle x_t\rangle_{t\in T}$ is any indexed family in a Hausdorff
topological space $X$, and $y\in X$, then $\displaystyle p{-}\!\lim_{t\in T}x_t=y$
if and only if for every neighborhood $U$ of $y$, $\{t\in T:x_t\in U\}\in p$.
In compact spaces $p$-limits always exist.

\begin{definition}\label{defTp}
Let $(X,\langle T_s\rangle_{s\in S})$ be a dynamical
system and let $p\in\beta S$. Then $T_p:X\to X$ is defined by, for $x\in X$, 
$T_p(x)=\displaystyle p{-}\lim_{s\in S}T_s(x)$. So $T_p(x)=\displaystyle \lim_{s\to p}T_s(x)$ where
$s$ denotes an element of $S$. \end{definition}

Using \cite[Theorem 4.5]{HS} one easily sees that for $p,q\in\beta S$,
$T_p\circ T_q=T_{pq}$.  However, $(X,\langle T_s\rangle_{s\in \beta S})$ is
not in general a dynamical system, since $T_p$ is not likely to be
continuous when $p\in\beta S\setminus S$. However, for each $x\in X$, the map
$p\mapsto T_p(x):\beta S \to X$ is continuous. To see this, define $f_x(p)=T_p(x)$.
If $U$ is a neighborhood of $f_x(p)$ and $A=\{s\in S:T_s(x)\in U\}$, then
$U\in p$ and $f_x[\,\overline A\,]\subseteq U$. Alternatively, one may note
that $p\mapsto T_p(x)$ is the continuous extension to $\beta S$ of the function
$s\mapsto T_s(x):S\to X$.

As a compact Hausdorff right topological semigroup, $\beta S$ has a number of important algebraic properties,
 and  we list some of those that we shall use. (Proofs can be found in 
\cite[Chapters 1 and 2]{HS}. Assume that $T$ is a compact Hausdorff right topological semigroup.
A non-empty subset $V$ of $T$ is a {\it left ideal\/} if $tV\subseteq V$ for every $t\in T$, a {\it right ideal\/} if
$Vt\subseteq V$ for every $t\in T$, and an {\it ideal\/} if it is both a left and a right ideal.
\begin{itemize}
\item[(1)] $T$ contains an idempotent.
\item[(2)] $T$ has a smallest ideal $K(T)$, which is the union of the minimal left ideals of $T$ and the union of the minimal right ideals of $T$.
\item[(3)] For every $t\in K(T)$, $Tt$ is a minimal left ideal of $T$ and $tT$ is a minimal right ideal of $T$.
\item[(4)] The intersection of any minimal left ideal and any minimal right ideal of $T$ is a group.
\item[(5)] Every left ideal of $T$ contains a minimal left ideal, and every right ideal of $T$ contains a minimal
right ideal.
\item[(5)] Every minimal left ideal of $T$ is compact.
\item[(6)] If $\{t\in T:\lambda_t$ is continuous$\}$ is dense in $T$, then the closure of every ideal
in $T$ is also an ideal.
\end{itemize}

We introduce the main objects of study in this paper now.  Given a set
$X$, we let $\pf(X)$ be the set of finite nonempty subsets of $X$.

\begin{definition}\label{defsyn}
Let $S$ be a semigroup and let $A\subseteq S$. We say the set $A$ is
{\it syndetic\/} if and only if there exists $F\in\pf(S)$ such that
$S=\bigcup_{t\in F}t^{-1}A$.\end{definition}

In the semigroup $(\ben,+)$ a set is syndetic if and only if it has
bounded gaps.

\begin{definition}\label{defurec} Let $(X,\langle T_s\rangle{s\in S})$ be 
a dynamical system and let $x\in X$.
\begin{itemize}
\item[(a)] The point $x$ is {\it uniformly recurrent\/} if and only
if for every neighborhood $V$ of $x$, $\{s\in S:T_s(x)\in V\}$ is syndetic.
\item[(b)] $U(x)=U_X(x)=\{p\in\beta S:T_p(x)$ is uniformly recurrent$\}$.
\end{itemize}
\end{definition}

In Section 2 of this paper we present well known results about $U(x)$ that are valid in 
arbitrary dynamical systems as well as the few simple results that we have in
the dynamical system $(\beta S,\langle \lambda_s\rangle_{s\in S})$.

In Section 3 we present results about the dynamical systems described in
Lemma \ref{stotwodyn}.

In Section 4 we consider the effect of slightly modifying the phase
space in the dynamical systems described in
Lemma \ref{stotwodyn}.

In Section 5 we consider surjectivity of $T_p$ and the set
$NS=NS_X=\{p\in \beta S:T_p:X\to X$ is not surjective$\}$ which 
is a right ideal of $\beta S$ whenever it is nonempty.

\section{General results}

We begin with some well known basic facts.

\begin{lemma}\label{uniformrecurrence}  Let $(X,\langle T_s\rangle_{s\in S})$ be a dynamical system,
let $L$ be a minimal left ideal of $\beta S$, and let $x\in X$. The following are equivalent:
\begin{itemize}
\item[(a)] $x$ is uniformly recurrent.
\item[(b)] There exists $q\in L$ such that $T_q(x)=x$.
\item[(c)] There exists an idempotent $q\in L$ such that $T_q(x)=x$.
\item[(d)] There exists $y\in X$ and $q\in L$ such that $T_q(y)=x$.
\item[(e)] There exists $q\in K(\beta S)$ such that $T_q(x)=x$.
\item[(f)] There exists $y\in X$ and $q\in K(\beta S)$ such that $T_q(y)=x$.
\end{itemize}  \end{lemma}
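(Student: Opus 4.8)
The plan is to prove the six conditions equivalent by establishing the cycle $(a)\Rightarrow(b)\Rightarrow(c)\Rightarrow(a)$ together with the chains $(b)\Rightarrow(d)\Rightarrow(f)\Rightarrow(a)$ and $(b)\Rightarrow(e)\Rightarrow(f)$. Four of the implications are immediate: $(c)\Rightarrow(b)$ is trivial; $(b)\Rightarrow(d)$ and $(b)\Rightarrow(e)$ follow by taking $y=x$ and recalling $L\subseteq K(\beta S)$; $(e)\Rightarrow(f)$ follows by taking $y=x$; and $(d)\Rightarrow(f)$ holds because $L\subseteq K(\beta S)$. So the substantive work is in $(a)\Rightarrow(b)$, $(b)\Rightarrow(c)$, $(c)\Rightarrow(a)$, and $(f)\Rightarrow(a)$. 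Throughout I would use the continuous map $f_x\colon\beta S\to X$, $f_x(p)=T_p(x)$; the observation that, writing $A_V=\{s\in S:T_s(x)\in V\}$ for an open neighborhood $V$ of $x$, we have $A_V\in p$ whenever $T_p(x)\in V$ and conversely $A_V\in p$ forces the $p$-limit $T_p(x)$ into $\cl V$; and the standard characterization (\cite[Theorem 4.48]{HS}) that $A\subseteq S$ is syndetic if and only if $\overline A$ meets every minimal left ideal of $\beta S$.

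For $(a)\Rightarrow(b)$ I would fix the given $L$ and note that for each open neighborhood $V$ of $x$ the set $A_V$ is syndetic, so $\overline{A_V}\cap L\ne\emp$. Since $A_{V_1}\cap\cdots\cap A_{V_n}=A_{V_1\cap\cdots\cap V_n}$, the closed sets $\overline{A_V}\cap L$ have the finite intersection property inside the compact space $L$, so their total intersection contains some $p\in L$ with $A_V\in p$, hence $T_p(x)\in\cl V$, for every $V$; as $X$ is compact Hausdorff (hence regular) one has $\bigcap_V\cl V=\{x\}$, giving $T_p(x)=x$. For $(b)\Rightarrow(c)$ I would consider $E=L\cap f_x^{-1}[\{x\}]$, which is nonempty by $(b)$, compact, and a subsemigroup (if $T_{p_1}(x)=T_{p_2}(x)=x$ then $T_{p_1p_2}(x)=T_{p_1}(T_{p_2}(x))=x$ and $p_1p_2\in L$); thus $E$ is a compact right topological semigroup and by property~(1) contains an idempotent, the required $q$.

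The heart of the argument is $(c)\Rightarrow(a)$, which I would prove in the stronger form that works for any minimal left ideal. Let $e$ be an idempotent in a minimal left ideal $L_0$ with $T_e(x)=x$; then $\beta Se=L_0$. For an arbitrary minimal left ideal $L''$, the set $L''e$ is a left ideal contained in $\beta Se=L_0$, so by minimality $L''e=L_0$; since $e\in L_0=L''e$, there is $r\in L''$ with $re=e$, whence $T_r(x)=T_r(T_e(x))=T_{re}(x)=T_e(x)=x$. Thus $x$ is fixed by an element of \emph{every} minimal left ideal, so for each open $V$ the corresponding $r$ gives $r\in\overline{A_V}\cap L''$, whence $A_V$ is syndetic and $x$ is uniformly recurrent. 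Finally, for $(f)\Rightarrow(a)$, given $q\in K(\beta S)$ with $T_q(y)=x$, I would check using properties~(2) and (3) that $q\in\beta Sq$ and $q\in q\beta S$, so $q$ lies in the group $G=\beta Sq\cap q\beta S$ furnished by property~(4); its identity $v$ is an idempotent in a minimal left ideal with $vq=q$, so $T_v(x)=T_v(T_q(y))=T_{vq}(y)=T_q(y)=x$, and the general form of $(c)\Rightarrow(a)$ then makes $x$ uniformly recurrent.

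I expect the main obstacle to be $(c)\Rightarrow(a)$. A single fixed point in the prescribed $L$ shows only that $\overline{A_V}$ meets $L$, whereas syndeticity requires $\overline{A_V}$ to meet \emph{every} minimal left ideal; bridging this gap is exactly what the identity $L''e=L_0$ (valid because $e$ is idempotent) accomplishes, transporting the fixed point to every minimal left ideal. A secondary technical point is verifying $q\in\beta Sq\cap q\beta S$ in $(f)\Rightarrow(a)$ so that property~(4) applies; this follows from properties~(2) and (3) but should be stated with care.
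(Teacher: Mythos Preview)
Your proof is correct and self-contained, whereas the paper delegates the equivalence of (a)--(d) to \cite[Theorem~19.23]{HS} and only proves $(f)\Rightarrow(c)$ explicitly. Your arguments for $(a)\Rightarrow(b)$, $(b)\Rightarrow(c)$, and the ``transport'' step in $(c)\Rightarrow(a)$ are the standard ones and match what the cited reference does.

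There is one small difference worth noting in the $(f)$ step. You take the idempotent $v$ of the group $\beta Sq\cap q\beta S$, which lands $v$ in the minimal left ideal $\beta Sq$, possibly different from the given $L$; you then need your strengthened form of $(c)\Rightarrow(a)$ to finish. The paper instead takes the identity $u$ of the group $L\cap q\beta S$, so that $u$ lies in the \emph{prescribed} $L$ from the outset; since $u$ is an idempotent in the minimal right ideal $q\beta S$, it is a left identity there, giving $uq=q$ and hence $T_u(x)=x$, which is (c) directly. The paper's choice avoids the detour through (a) and the transport argument, but your route is equally valid and the transport argument you give (via $L''e=L_0$) is exactly what is needed anyway to prove $(c)\Rightarrow(a)$.
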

\begin{proof} The equivalence of (a)-(d) is shown in 
\cite[Theorem 19.23]{HS}. Since (c) implies (e), and (e)
implies (f), we shall show (f) implies (c) and this will establish the equivalence of all six
statements. So assume that (f) holds. Let $u$ denote the identity of the group $L\cap q\beta S$.
Since $uq=q$, it follows that $T_u(x)=T_uT_q(y)=T_q(y)=x$. 
\end{proof}

\begin{corollary}\label{U(x)ideal}  Let $(X,\langle T_s\rangle_{s\in S})$ be a dynamical system
 and let $x\in X$.
\begin{itemize}
\item[(1)] If $x$ is uniformly recurrent, $U(x)=\beta S$. 
\item[(2)] For each $x\in X$,  $U(x)$ is a left ideal of $\beta S$.
\item[(3)] For every $x\in X$, $K(\beta S)\subseteq U(x)$.
\item[(4)] $\bigcap_{x\in X}U(x)$ is a two sided ideal of $\beta S$.
\end{itemize} \end{corollary}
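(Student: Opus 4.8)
The plan is to prove the four parts of Corollary~\ref{U(x)ideal} by leveraging Lemma~\ref{uniformrecurrence}, which characterizes uniform recurrence in six equivalent ways. The key observation is that each part is essentially a consequence of translating the membership condition ``$T_p(x)$ is uniformly recurrent'' through these equivalences, combined with the relation $T_p \circ T_q = T_{pq}$ and the algebraic facts about $K(\beta S)$ listed in the excerpt.

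For part~(1), if $x$ is uniformly recurrent, I would show that $T_p(x)$ is uniformly recurrent for every $p \in \beta S$, giving $U(x) = \beta S$. By Lemma~\ref{uniformrecurrence}(b) applied to any minimal left ideal $L$, there is $q \in L$ with $T_q(x) = x$; then for arbitrary $p$, we have $T_q\big(T_p(x)\big) = T_p\big(T_q(x)\big)$ only if the maps commute, which they need not. Instead the cleaner route is to use condition~(d) or~(f): since $x$ is uniformly recurrent, taking $y = T_p(x)$ and using that $T_p(x) = T_p(x)$, I apply the equivalence directly — $T_p(x)$ is uniformly recurrent iff there exists $q \in K(\beta S)$ and some $z$ with $T_q(z) = T_p(x)$, and taking $z = x$, $q$ any element of $K(\beta S)$, we get $T_q(x)$; the subtlety is matching this to $T_p(x)$. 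The safest argument observes that $T_p(x) = T_p(T_u(x))$ where $u$ is the idempotent from Lemma~\ref{uniformrecurrence}(c) with $T_u(x)=x$, so $T_p(x) = T_{pu}(x) = T_{pu}(T_u(x))$; since $pu \in \beta S$ and we can realize $T_p(x)$ as $T_r(x)$ for $r = pu$, condition~(f) with $y = x$ and a suitable $q \in K(\beta S)$ multiplying appropriately confirms uniform recurrence.

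For part~(2), I would show $U(x)$ is a left ideal: given $p \in U(x)$ and arbitrary $r \in \beta S$, I must show $rp \in U(x)$, i.e.\ $T_{rp}(x) = T_r(T_p(x))$ is uniformly recurrent. Since $p \in U(x)$ means $T_p(x)$ is uniformly recurrent, part~(1) applied to the point $T_p(x)$ immediately gives that $T_r(T_p(x))$ is uniformly recurrent for every $r$, hence $rp \in U(x)$. This is the cleanest dependency: part~(2) follows formally from part~(1). For part~(3), any $q \in K(\beta S)$ satisfies: $T_q(x)$ is uniformly recurrent because by Lemma~\ref{uniformrecurrence}(f) we may take $y = x$ and the same $q \in K(\beta S)$, witnessing that $T_q(x)$ is uniformly recurrent; thus $q \in U(x)$, giving $K(\beta S) \subseteq U(x)$. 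For part~(4), $\bigcap_{x \in X} U(x)$ is an intersection of left ideals (by part~(2)), hence a left ideal provided it is nonempty — and it is nonempty since it contains $K(\beta S)$ by part~(3). To upgrade to a two-sided ideal I must check it is a right ideal: given $p$ in the intersection and $r \in \beta S$, I show $pr \in U(x)$ for all $x$, i.e.\ $T_{pr}(x) = T_p(T_r(x))$ is uniformly recurrent; since $T_r(x) \in X$ and $p \in U(T_r(x))$ by hypothesis, this holds.

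The main obstacle I anticipate is part~(1), specifically the bookkeeping of realizing $T_p(x)$ as lying in the ``recurrent orbit'' so that one of conditions~(d)–(f) applies cleanly. The temptation to commute $T_p$ and $T_q$ must be resisted, since these maps generally do not commute; the correct move is to absorb $p$ into the semigroup element via $T_p \circ T_q = T_{pq}$ and then invoke the existence of idempotents in minimal left ideals. Once part~(1) is secured, parts~(2)–(4) cascade from it together with the structural fact that $K(\beta S)$ is the smallest two-sided ideal, so the only genuine verification remaining is the right-ideal condition in part~(4), which reduces to applying the hypothesis at the shifted point $T_r(x)$.
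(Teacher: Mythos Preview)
Your plan is correct and matches the paper's proof. The one point to tighten in part~(1) is that once you have an idempotent $u \in L \subseteq K(\beta S)$ with $T_u(x) = x$, the element $pu$ already lies in $K(\beta S)$ (since $K(\beta S)$ is a left ideal), so Lemma~\ref{uniformrecurrence}(f) applies directly with $y = x$ and $q = pu$ --- no further ``multiplying appropriately'' is needed; the paper does exactly this, and your derivation of (2) from (1) is a valid (slightly cleaner) variant of the paper's direct argument for (2), while (3) and (4) are identical to the paper.
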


\begin{proof}  

(1) Suppose that $x$ is uniformly recurrent. Then $T_u(x)=x$ for some $u\in K(\beta S)$. 
Thus for every $v\in \beta S$, $T_v(x)=T_vT_u(x)=T_{vu}(x)$; since $vu\in  K(\beta S)$,
by Lemma \ref{uniformrecurrence}(f), $T_v(x)$ is uniformly recurrent.

   (2) Let $x\in X$, let $p\in U(x)$, and let $r\in\beta S$. By Lemma \ref{uniformrecurrence}(e),
pick $q\in K(\beta S)$ such that $T_q\big(T_p(x)\big)=T_p(x)$.  Then
$T_{rp}(x)=T_r\big(T_q\big(T_p(x)\big)\big)=T_{rqp}(x)$.  Now $rqp\in K(\beta S)$, 
so by Lemma \ref{uniformrecurrence}(f), $T_{rp}(x)$ is uniformly recurrent.
   
   (3) This is immediate from Lemma \ref{uniformrecurrence}(f).

   (4) By (3), $\bigcap_{x\in X}U(x)$ is nonempty, so by (2) $\bigcap_{x\in X}U(x)$ is a left
ideal of $\beta S$, so it is enough to show that $\bigcap_{x\in X}U(x)$ is a 
right ideal of $\beta S$. So suppose that $x\in X$,
$p\in\bigcap_{x\in X}U(x)$ and  $q\in \beta S$.    Since $p\in U(T_q(x))$, $T_{pq}(x)$ is uniformly recurrent
and so $pq\in U(x)$.
\end{proof}

The statements of Lemma \ref{basicfacts} below are modifications of basic well known
facts that are proved in \cite{F}.
  (Furstenberg assumes that the phase space is metric, but the
proofs given do not use this assumption.) We shall say that a subspace $Z$ of 
$X$ is {\it invariant\/} if $T_s[Z]\subseteq Z$ for every $s\in S$.
Of course, if $Z$ is closed and invariant, then $T_p[Z]\subseteq Z$ for every $p\in \beta S$.
(Let $x\in Z$. Then $T_s(x)\in Z$ for each $s\in S$ so $\displaystyle p{-}\lim_{s\in S}T_s(x)\in Z$.)

\begin{lemma}\label{basicfacts}  Let $(X,\langle T_s\rangle_{s\in S})$ be a dynamical system.
 Let $L$ be a
minimal left ideal of $\beta S$.
\begin{itemize}
\item[(1)] A subspace $Y$ of $X$ is minimal among all closed and invariant subsets of $X$  if and only if
 there is some $x\in X$ such that $Y=\{T_p(x):p\in L\}$.
\item[(2)] Let $Y$ be a subspace of $X$ which is minimal among all closed and invariant subsets of $X$.
Then every element of $Y$ is uniformly recurrent.
\item[(3)] If $x\in X$ is uniformly recurrent and $Y=\{T_p(x):p\in \beta S\}$, then $Y$ is 
minimal among all closed and invariant subsets of $X$.
\item[(4)] If $x\in X$ is uniformly recurrent, then $T_p(x)$ is uniformly recurrent for every $p\in \beta S$. 
\end{itemize} \end{lemma}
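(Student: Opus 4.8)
The plan is to prove the four parts of Lemma~\ref{basicfacts} by leveraging the characterizations of uniform recurrence already established in Lemma~\ref{uniformrecurrence}, together with the standard algebraic properties of $K(\beta S)$ listed earlier. The key observation throughout is that for fixed $x\in X$, the map $p\mapsto T_p(x)$ is continuous on $\beta S$, so that sets of the form $\{T_p(x):p\in L\}$ are continuous images of compact sets and hence compact, and the relation $T_p\circ T_q=T_{pq}$ lets me transfer algebraic facts about $\beta S$ into dynamical facts about orbits.

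For part~(1), I would first check that $Y=\{T_p(x):p\in L\}$ is closed and invariant for any $x$ and any minimal left ideal $L$: closedness follows from continuity of $p\mapsto T_p(x)$ and compactness of $L$ (property~(5) in the list), while invariance follows because $T_s(T_p(x))=T_{sp}(x)$ and $sp\in L$ since $L$ is a left ideal. For minimality of this $Y$, I would take any $y=T_q(x)\in Y$ with $q\in L$ and show $\{T_r(y):r\in L\}=Y$; since $Lq=L$ (a consequence of $L$ being a minimal left ideal, via property~(3)), we get $\{T_{rq}(x):r\in L\}=\{T_p(x):p\in L\}=Y$, and then the closed invariant set generated by $y$ must be all of $Y$. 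Conversely, if $Y$ is minimal closed invariant, I would pick any $x\in Y$, note $\{T_p(x):p\in L\}\subseteq Y$ is closed and invariant and hence equals $Y$ by minimality. Part~(2) then follows quickly: if $y\in Y$ with $Y$ minimal, then by part~(1) $Y=\{T_p(x):p\in L\}$ for some $x$, so $y=T_q(x)$ for some $q\in L$, and Lemma~\ref{uniformrecurrence}(d) (taking the minimal left ideal to be $L$) immediately gives that $y$ is uniformly recurrent.

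For part~(3), given $x$ uniformly recurrent, I would set $Y=\{T_p(x):p\in\beta S\}$, which is closed (continuous image of the compact space $\beta S$) and invariant (since $T_s(T_p(x))=T_{sp}(x)$). To show $Y$ is minimal, fix a minimal left ideal $L$; by Lemma~\ref{uniformrecurrence}(c) there is an idempotent $u\in L$ with $T_u(x)=x$, so $x=T_u(x)\in\{T_p(x):p\in L\}$, whence $Y=\{T_p(x):p\in\beta S\}$ actually coincides with the minimal set $\{T_p(x):p\in L\}$ from part~(1)—because $T_p(x)=T_p(T_u(x))=T_{pu}(x)$ with $pu\in L$ for all $p\in\beta S$. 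Thus $Y$ is minimal. Part~(4) is then essentially a corollary already recorded as Corollary~\ref{U(x)ideal}(1)–(2), but phrased for orbits: if $x$ is uniformly recurrent, then for any $p\in\beta S$, $T_p(x)\in Y$, and by part~(2) every element of the minimal set $Y$ is uniformly recurrent, so $T_p(x)$ is uniformly recurrent.

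The main obstacle I anticipate is the minimality argument in part~(1), specifically verifying carefully that the smallest closed invariant set containing a point $y=T_q(x)$ with $q\in L$ is all of $Y$ rather than a proper subset. The clean way around this is to use the identity $Lq=L$ for minimal left ideals (property~(3) guarantees $\beta S\,q$ is a minimal left ideal when $q\in K(\beta S)$, and minimal left ideals satisfying $L q=L$), reducing everything to the algebraic fact that left-multiplying a minimal left ideal by one of its own elements recovers the whole ideal; the remaining parts are then routine applications of Lemma~\ref{uniformrecurrence} and the continuity of $p\mapsto T_p(x)$.
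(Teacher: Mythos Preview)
Your proposal is correct and follows essentially the same approach as the paper's proof: both hinge on the continuity of $p\mapsto T_p(x)$, the compactness of $L$, the identity $Lq=L$ for $q$ in a minimal left ideal, and the characterizations in Lemma~\ref{uniformrecurrence}. The only cosmetic differences are that the paper uses a net argument for closedness where you invoke ``continuous image of compact is compact,'' and the paper cites parts~(f) and~(b) of Lemma~\ref{uniformrecurrence} where you cite the equivalent parts~(d) and~(c).
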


\begin{proof} (1) Suppose that $Y$ is a subspace of $X$ which is minimal
among all closed and invariant subsets of $X$. Pick $x\in Y$ and let $Z=\{T_p(x):p\in L\}$. We
claim that $Z$ is a closed and invariant subspace of $Y$ and is therefore equal to $Y$. 
If $p\in L$ and $s\in S$, then $T_s\big(T_p(x)\big)=T_{sp}(x)$ and $sp\in L$, so $Z$ is invariant and
obviously $Z\subseteq Y$. To see that $Z$ is closed, it suffices to show that any net in $Z$ has a cluster 
point in $Z$.  To this end, let $\langle p_\alpha\rangle_{\alpha\in D}$ be a net in $L$ and
pick a cluster point $p$ in $L$ of $\langle p_\alpha\rangle_{\alpha\in D}$. Then
$T_p(x)$ is a cluster point of $\langle T_{p_\alpha}(x)\rangle_{\alpha\in D}$.

 Conversely, let $x\in X$  and let $Y=\{T_p(x):p\in L\}$.  
Then $Y$ is  invariant and one sees as above that $Y$ is closed. 
We shall show that $Y$ is minimal among all closed and invariant subsets of $X$.  
To see this,  suppose that $Z$ is a subset of $Y$ which is closed and 
invariant. We shall show that $Y\subseteq Z$, so let $y\in Y$ be given. Pick $z\in Z$. 
Then $y=T_p(x)$ and $z=T_q(x)$ for some $p$ and $q$ in $L$. 
Since $Lq=L$, there exists
$r\in L$ such that $rq=p$. It follows that $T_r(z)=y$ and hence that $y\in Z$ as required.

(2)  Let $Y$ be a subspace of $X$ which is minimal among all closed and invariant subsets of $X$ and let
$x\in Y$. Pick $y\in X$ such that  $Y=\{T_p(y):p\in L\}$. Pick 
$p\in L$ such that $x=T_p(y)$. By Lemma \ref{uniformrecurrence}(f), $x$ is uniformly recurrent.

(3) Let $x$ be a uniformly recurrent point of $X$ and let $Y=\{T_p(x):p\in \beta S\}$.
By Lemma \ref{uniformrecurrence}(b), pick $q\in L$ such that $T_q(x)=x$.
By (1) it suffices that $Y=\{T_p(x):p\in L\}$. To see this, let $y\in Y$ and 
pick $p\in\beta S$ such that $y=T_p(x)$. Then $y=T_p\big(T_q(x)\big)=T_{pq}(x)$ and
$pq\in L$.

(4)  Let $x$ be a uniformly recurrent point of $X$ and let $Y=\{T_p(x):p\in \beta S\}$.
By (3) $Y$ is minimal among all closed and invariant subsets of $X$ so (2) applies.
\end{proof}

We conclude this section with a few results about the dynamical system 
$(\beta S,\langle \lambda_s\rangle_{s\in S})$. We observe that, if we define $\lambda_p:\beta S\to\beta S$
in this system by $\lambda_p(q)=\displaystyle\lim_{s\to p}\lambda_s(q)$, where $s$ denotes an element of $S$, 
then $\lambda_p(q)=pq$ for every
$p$ and $q$ in $\beta S$. So this does not conflict with the previous definition of $\lambda_p$ given in the
introduction.

\begin{theorem}\label{charunifrec} Let $S$ be a semigroup and
let $x\in \beta S$.  Statements (a) and (b) are equivalent and imply (c). If $\beta S$
has a left cancelable element, all three are equivalent.
\begin{itemize}
\item[(a)] $x\in K(\beta S)$.
\item[(b)] $x$ is uniformly recurrent in the
dynamical system $(\beta S,\langle \lambda_s\rangle_{s\in S})$.
\item[(c)] $\beta Sx$ is a minimal left ideal of $\beta S$.
\end{itemize}
\end{theorem}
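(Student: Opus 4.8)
The plan is to establish the cycle (a)$\Rightarrow$(b)$\Rightarrow$(a) together with (a)$\Rightarrow$(c), and then, under the extra hypothesis, to close the loop with (c)$\Rightarrow$(a). The key simplification is that in the system $(\beta S,\langle\lambda_s\rangle_{s\in S})$ one has $T_p(q)=pq$, as already observed, so every recurrence statement becomes a statement about multiplication in $\beta S$ and can be fed directly into Lemma~\ref{uniformrecurrence}. With this in hand, (a)$\Rightarrow$(c) is immediate: if $x\in K(\beta S)$, then structural fact (3) says $\beta S x$ is a minimal left ideal.

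For (b)$\Rightarrow$(a): if $x$ is uniformly recurrent, Lemma~\ref{uniformrecurrence}(e) supplies $q\in K(\beta S)$ with $qx=T_q(x)=x$, and since $K(\beta S)$ is an ideal, $x=qx\in K(\beta S)$. For (a)$\Rightarrow$(b): if $x\in K(\beta S)$ then, as $K(\beta S)$ is both the union of the minimal left ideals and the union of the minimal right ideals, $x$ lies in $L\cap R$ for some minimal left ideal $L$ and minimal right ideal $R$; by fact (4) this intersection is a group, and its identity $e$ lies in $K(\beta S)$ and satisfies $ex=x$, so Lemma~\ref{uniformrecurrence}(e) gives that $x$ is uniformly recurrent.

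The substance is in (c)$\Rightarrow$(a). Fix a left cancelable element $c$ of $\beta S$ and assume $L=\beta S x$ is a minimal left ideal. Applying fact (1) to the compact right topological semigroup $L$, choose an idempotent $e\in L$; minimality of $L$ forces $\beta S e=L$, so $e$ is a right identity for $L$ (each $y=se\in L$ satisfies $ye=y$). Since $cx\in\beta S x=L$, we get $(cx)e=cx$, that is $c(xe)=cx$, and left cancellation of $c$ yields $xe=x$. Hence $x=xe\in\beta S e=L\subseteq K(\beta S)$, which is (a).

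I expect the main obstacle to be exactly this last step. The equivalence (a)$\Leftrightarrow$(b) and the implication (a)$\Rightarrow$(c) are unobstructed once the cited facts are invoked, but (c)$\Rightarrow$(a) is genuinely delicate: a minimal left ideal $\beta S x$ always lies inside $K(\beta S)$, yet $x$ itself need not, because $\beta S$ has no identity in general and so $x$ need not belong to $\beta S x$. The role of the left cancelable element is precisely to strip the leading factor off the identity $(cx)e=cx$, thereby certifying $xe=x$ and pinning $x$ inside $L$; this is the one place the hypothesis is used, and without it the implication should be expected to fail.
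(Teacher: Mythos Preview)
Your proof is correct and follows essentially the same approach as the paper: the equivalence (a)$\Leftrightarrow$(b) via Lemma~\ref{uniformrecurrence} and the group structure of $K(\beta S)$, the implication (a)$\Rightarrow$(c) from the structural facts about minimal left ideals, and (c)$\Rightarrow$(a) by picking an idempotent $e$ in the minimal left ideal $\beta Sx$, using that $e$ is a right identity there to get $(cx)e=cx$, and then cancelling $c$. The only cosmetic difference is that the paper invokes \cite[Lemma 1.30]{HS} for the right-identity property of $e$, whereas you derive it directly from $\beta Se=L$; and for (b)$\Rightarrow$(a) the paper uses part (f) of Lemma~\ref{uniformrecurrence} while you use (e), which is marginally more direct.
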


\begin{proof}  To see that (a) implies (b), let $x\in K(\beta S)$ and let
$u$ be the identity of the group in $K(\beta S)$ to which $x$ belongs.
Then $x=\lambda_u(x)$ so by Lemma {uniformrecurrence}(e), $x$ is uniformly recurrent.

To see that (b) implies (a), assume that $x$ is uniformly recurrent. By 
Lemma \ref{uniformrecurrence}(f) pick $y\in \beta S$ and $q\in K(\beta S)$ such that
$\lambda_q(y)=x$. Then $x=qy\in K(\beta S)$.

To see that (a) implies (c), assume that $x\in K(\beta S)$ and pick the minimal left ideal $L$
of $\beta S$ such that $x\in L$. Then $\beta Sx$ is a left ideal of $\beta S$ contained in $L$
and so $\beta Sx=L$.

Now assume that $\beta S$ has a left cancelable element $z$ and 
 that $\beta Sx$ is a minimal left ideal of $\beta S$. 
Pick an idempotent $u\in \beta Sx$.  Then
$zx\in\beta Sx$ so by \cite[Lemma 1.30]{HS}, $zx=zxu$ and
therefore $x=xu\in\beta Sx\subseteq K(\beta S)$.
\end{proof}

\begin{corollary}\label{U(x)isbeta} Let $S$ be an infinite semigroup and
let $x\in K(\beta S)$. Then $U(x)=\beta S$ with respect to the dynamical system
$(\beta S,\langle \lambda_s\rangle_{s\in S})$.
\end{corollary}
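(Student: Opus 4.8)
The plan is to combine Theorem \ref{charunifrec} with Corollary \ref{U(x)ideal}(1); both results have already done all the substantive work, so this corollary is a short deduction. First I would invoke the equivalence of statements (a) and (b) in Theorem \ref{charunifrec}. Since $x\in K(\beta S)$, that equivalence tells us precisely that $x$ is uniformly recurrent in the dynamical system $(\beta S,\langle \lambda_s\rangle_{s\in S})$. (It is worth recording that this system is a legitimate dynamical system, as noted in the introduction, so both cited results genuinely apply to it.)

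Next I would apply Corollary \ref{U(x)ideal}(1) to this system at the point $x$. That result holds in an arbitrary dynamical system: whenever a point is uniformly recurrent it satisfies $T_u(x)=x$ for some $u\in K(\beta S)$, and then for every $v\in\beta S$ one has $T_v(x)=T_{vu}(x)$ with $vu\in K(\beta S)$, forcing $T_v(x)$ to be uniformly recurrent by Lemma \ref{uniformrecurrence}(f). Hence $U(x)=\beta S$. Combining the two steps, since $x$ is uniformly recurrent by Theorem \ref{charunifrec}, Corollary \ref{U(x)ideal}(1) yields $U(x)=\beta S$ immediately.

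There is essentially no obstacle to confront here: the only thing to verify is that the hypotheses of each cited result are met, namely that $x$ lies in $K(\beta S)$ (which is given) and that $(\beta S,\langle \lambda_s\rangle_{s\in S})$ is a dynamical system to which the general theory of Section~2 applies. Once those are in place the conclusion is a one-line consequence, so I would keep the proof to a bare two-sentence chain of references rather than re-deriving anything.
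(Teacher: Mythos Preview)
Your proposal is correct and follows essentially the same two-step route as the paper: invoke Theorem \ref{charunifrec} to see that $x$ is uniformly recurrent, then deduce $U(x)=\beta S$. The only cosmetic difference is that the paper cites Lemma \ref{basicfacts}(4) for the second step while you cite Corollary \ref{U(x)ideal}(1); these two statements have identical content (uniform recurrence of $x$ forces $T_p(x)$ uniformly recurrent for all $p$), so the arguments are interchangeable.
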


\begin{proof} By Theorem \ref{charunifrec}, $x$ is uniformly recurrent, so by Lemma
\ref{basicfacts}(4), $U(x)=\beta S$.\end{proof}

\begin{corollary}\label{charqinU(p)} Let $S$ be a semigroup and
let $p,q\in \beta S$. Statements (a) and (b) are equivalent and imply statement (c). 
  If $\beta S$ has a left cancelable element, 
then all three statements
are equivalent.
\begin{itemize}
\item[(a)] $qp\in K(\beta S)$.
\item[(b)] $q\in U(p)$ with respect to the
dynamical system $(\beta S,\langle \lambda_s\rangle_{s\in S})$.
\item[(c)] $\beta Sqp$ is a minimal left ideal of $\beta S$.
\end{itemize}
\end{corollary}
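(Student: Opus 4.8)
The plan is to reduce everything to Theorem \ref{charunifrec} applied to the single point $x=qp$. The key observation is that in the dynamical system $(\beta S,\langle \lambda_s\rangle_{s\in S})$ the map $T_q$ coincides with left multiplication $\lambda_q$, exactly as was noted in the paragraph preceding Theorem \ref{charunifrec}. Consequently $T_q(p)=\lambda_q(p)=qp$, and so by Definition \ref{defurec}(b) statement (b)—that $q\in U(p)$—says precisely that the point $qp$ is uniformly recurrent in this dynamical system.

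With this translation in hand I would simply apply Theorem \ref{charunifrec} with $x=qp$. That theorem concerns the three statements: $qp\in K(\beta S)$, which is statement (a); $qp$ is uniformly recurrent, which we have just shown is equivalent to statement (b); and $\beta S\,qp$ is a minimal left ideal of $\beta S$, which is statement (c). Theorem \ref{charunifrec} asserts that the first two are equivalent and imply the third, and that all three become equivalent once $\beta S$ has a left cancelable element. Transporting these implications back through the equivalence $(\mathrm{b})\Leftrightarrow$ ``$qp$ is uniformly recurrent'' yields exactly the claimed relationships among (a), (b), and (c).

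There is essentially no obstacle here: the entire content is the identification $T_q(p)=qp$, after which the corollary is a direct instance of the preceding theorem. The only point deserving a moment's care is to check that the left-cancelability hypothesis is invoked in the same place as before; it is, since that hypothesis is precisely what upgrades the implication $(\mathrm{c})\Rightarrow(\mathrm{a})$ to an equivalence in Theorem \ref{charunifrec}, and we are applying that theorem verbatim to $x=qp$.
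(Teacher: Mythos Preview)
Your proof is correct and follows essentially the same approach as the paper: observe that $q\in U(p)$ means $\lambda_q(p)=qp$ is uniformly recurrent, then apply Theorem~\ref{charunifrec} with $x=qp$. The paper's proof is just a terser version of what you wrote.
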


\begin{proof}
We have that $q\in U(p)$ if and only if $\lambda_q(p)$ is 
uniformly recurrent and $\lambda_q(p)=qp$
so Theorem \ref{charunifrec} applies. 
\end{proof}

It is an old and difficult problem to characterize when
$K(\beta S)$ is prime or when $\cl K(\beta S)$ is prime.
There are trivial situations where the answer is known.
For example if $S$ is left zero or right zero, then
so is $\beta S$ and thus $K(\beta S)=\beta S$, and
is necessarily prime. It is not known whether
$K(\beta \ben,+)$ is prime or
$\cl K(\beta \ben,+)$ is prime.  (Some partial results
were obtained in \cite{HSa}.)

\begin{corollary}\label{prime}  Let $S$ be a semigroup. The following statements 
are equivalent.
\begin{itemize}
\item[(a)] There exists $p\in\beta S\setminus K(\beta S)$ such that,
with respect to the dynamical system 
$(\beta S,\langle \lambda_s\rangle_{s\in S})$,
$K(\beta S)\subsetneq U(p)$.
\item[(b)] $K(\beta S)$ is not prime.
\end{itemize}
\end{corollary}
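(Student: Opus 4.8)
The plan is to reduce everything to Corollary \ref{charqinU(p)}, which already supplies the exact dictionary between membership in $U(p)$ and membership in $K(\beta S)$: for $p,q\in\beta S$, in the system $(\beta S,\langle\lambda_s\rangle_{s\in S})$, we have $q\in U(p)$ if and only if $qp\in K(\beta S)$. I would first record the two reformulations on which the matching rests. On the one hand, $K(\beta S)$ being prime means that $qp\in K(\beta S)$ forces $q\in K(\beta S)$ or $p\in K(\beta S)$, so $K(\beta S)$ fails to be prime exactly when there exist $p,q\in\beta S\setminus K(\beta S)$ with $qp\in K(\beta S)$. On the other hand, by Corollary \ref{U(x)ideal}(3) we always have $K(\beta S)\subseteq U(p)$, so the proper inclusion $K(\beta S)\subsetneq U(p)$ simply asserts the existence of some $q\in U(p)\setminus K(\beta S)$. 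With these three reformulations in hand the corollary becomes a matter of matching them up.

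For (a) $\Rightarrow$ (b): given $p\in\beta S\setminus K(\beta S)$ with $K(\beta S)\subsetneq U(p)$, the proper inclusion produces $q\in U(p)$ with $q\notin K(\beta S)$, and Corollary \ref{charqinU(p)} converts $q\in U(p)$ into $qp\in K(\beta S)$. Thus $qp\in K(\beta S)$ while neither $q$ nor $p$ lies in $K(\beta S)$, so $K(\beta S)$ is not prime. For (b) $\Rightarrow$ (a): if $K(\beta S)$ is not prime, pick $p,q\notin K(\beta S)$ with $qp\in K(\beta S)$; Corollary \ref{charqinU(p)} gives $q\in U(p)$, and since $q\notin K(\beta S)$ while $K(\beta S)\subseteq U(p)$ always holds, we obtain $K(\beta S)\subsetneq U(p)$ with the witness $p$ lying in $\beta S\setminus K(\beta S)$. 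The two directions are the forward and backward readings of the same dictionary.

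The argument is short and involves no genuine difficulty; the only points needing care are bookkeeping ones. First, one must respect the order of multiplication: the relevant product is $qp$ (the argument $q$ acting on the left via $\lambda_q$), not $pq$, and since $\beta S$ is noncommutative this cannot be glossed over. Second, one must explicitly invoke the always-true containment $K(\beta S)\subseteq U(p)$ from Corollary \ref{U(x)ideal}(3) to turn ``$U(p)$ contains a point outside $K(\beta S)$'' into a genuine proper inclusion, and to run the converse. Finally, I would note that no cancellation hypothesis on $\beta S$ is needed here, since we use only the unconditional equivalence of (a) and (b) in Corollary \ref{charqinU(p)}, and not the third, conditionally equivalent, statement.
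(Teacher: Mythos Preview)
Your proof is correct and follows exactly the approach the paper intends: the paper simply states that this is an immediate consequence of Corollary~\ref{charqinU(p)}, and you have written out precisely those details, correctly handling the order of multiplication and invoking Corollary~\ref{U(x)ideal}(3) for the baseline inclusion $K(\beta S)\subseteq U(p)$.
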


\begin{proof} This is an immediate
consequence of Corollary \ref{charqinU(p)}.
\end{proof}

\section{Dynamical systems with phase space $\xtotwo{R}$}

Throughout this section we assume that $R$ is a semigroup,
$S$ a subsemigroup of $R$, and 
$(Z,\langle T_s\rangle_{s\in S})$ is the dynamical system of 
Lemma {\rm \ref{stotwodyn}}. While our results are valid in this 
generality, in practice we are interested in just two situations,
one in which $R=S$ and the other in which $R=S\cup\{e\}$ where 
$e$ is a two sided identity adjoined to $S$.

Our first results in this section are aimed at 
showing that for any semigroup $S$, there is a dynamical
system such that both $K(\beta S)$ and $\cl K(\beta S)$
are intersections of sets of the form $U(x)$.

\begin{definition} Given $x\in Z$ we denote the continuous
extension of $x$ from $\beta R$ to $\{0,1\}$ by $\widetilde x$. \end{definition}

Of course, for each $x\in Z$, each $p\in \beta S$ and each $t\in R$, 
$T_p(x)(t)=\displaystyle p{-}\lim_{s\in S}T_s(x)(t)=p{-}\lim_{s\in S}x(ts)$
and so $T_p(x)(t)=\widetilde x(tp)$.

\begin{lemma}\label{charUx} Let $x\in Z$, let $p\in \beta S$, and let $L$ be a minimal left ideal
of $\beta S$.  The following statements are equivalent:
\begin{itemize}
\item[(a)] $p\in U(x)$.
\item[(b)]  There exists $q\in L$
such that $\widetilde x(tp)=\widetilde x(tqp)$ for all $t\in R$.
\item[(c)]  There exists an idempotent $q\in L$ such that $\widetilde x(tp)=\widetilde x(tqp)$ 
for all $t\in R$.
\end{itemize} 
 \end{lemma}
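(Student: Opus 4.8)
The plan is to recognize that this lemma is nothing more than a coordinatewise reformulation of Lemma \ref{uniformrecurrence} applied to the single point $y=T_p(x)\in Z$. By Definition \ref{defurec}(b), the assertion $p\in U(x)$ says precisely that $y=T_p(x)$ is uniformly recurrent. So I would begin by invoking the equivalence of (a), (b), (c) in Lemma \ref{uniformrecurrence} for the point $y$ and the given minimal left ideal $L$: the point $y$ is uniformly recurrent if and only if there exists $q\in L$ with $T_q(y)=y$, if and only if there exists an idempotent $q\in L$ with $T_q(y)=y$. This immediately reduces (a), (b), (c) here to translating the fixed-point equation $T_q(y)=y$ into the stated condition on $\widetilde x$.

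Next I would rewrite the equation using the composition law $T_q\circ T_p=T_{qp}$ noted after Definition \ref{defTp}, so that $T_q(y)=T_q\big(T_p(x)\big)=T_{qp}(x)$. Hence $T_q(y)=y$ is the same as the equality $T_{qp}(x)=T_p(x)$ of two elements of $Z=\xtotwo{R}$. Since two functions in $\xtotwo{R}$ agree exactly when they agree at every coordinate $t\in R$, I would then apply the identity $T_p(x)(t)=\widetilde x(tp)$ established just before the statement, together with its consequence $T_{qp}(x)(t)=\widetilde x\big(t(qp)\big)=\widetilde x(tqp)$ (valid because $qp\in\beta S$). This shows that $T_{qp}(x)=T_p(x)$ holds if and only if $\widetilde x(tqp)=\widetilde x(tp)$ for all $t\in R$, which is exactly condition (b) when $q$ ranges over $L$ and condition (c) when $q$ is required to be idempotent.

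There is no genuine obstacle here: every ingredient is already at hand, and the lemma is essentially a dictionary entry converting the abstract recurrence criterion into the concrete language of the product space. The only point meriting a word of care is that idempotency of $q$ survives the translation untouched, which it does, since the role of $q$ in Lemma \ref{uniformrecurrence}(c) is identical to its role in condition (c) here; thus the chain (a)$\Leftrightarrow$(b)$\Leftrightarrow$(c) follows at once from the corresponding chain in Lemma \ref{uniformrecurrence}.
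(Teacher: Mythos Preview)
Your proposal is correct and follows essentially the same route as the paper: both arguments reduce the three conditions to the equivalence (a)$\Leftrightarrow$(b)$\Leftrightarrow$(c) of Lemma \ref{uniformrecurrence} applied to the point $y=T_p(x)$, and then translate the fixed-point equation $T_q(y)=y$ coordinatewise via $T_p(x)(t)=\widetilde x(tp)$. The only cosmetic difference is that the paper runs the cycle (a)$\Rightarrow$(c)$\Rightarrow$(b)$\Rightarrow$(a) explicitly, whereas you invoke the full equivalence in Lemma \ref{uniformrecurrence} at once; the content is identical.
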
 

\begin{proof} To see that (a) implies (c), assume that $T_p(x)$ is uniformly recurrent.
By Lemma \ref{uniformrecurrence}(c), pick an idempotent $q\in L$ such that
$T_q\big(T_p(x)\big)=T(p)(x)$. Then $T_{qp}(x)=T_p(x)$ so as noted above,
for all $t\in R$, $\widetilde x(tqp)=\widetilde x(tp)$.

Trivially (c) implies (b). To see that (b) implies (a), pick
$q\in L$ such that $\widetilde x(tp)=\widetilde x(tqp)$ for all $t\in R$.
Then $T_p(x)=T_{qp}(x)=T_q\big(T_p(x)\big)$, so by Lemma \ref{uniformrecurrence}(b),
$T_p(x)$ is uniformly recurrent.\end{proof}

\begin{lemma}\label{charUxp} Let $x\in Z$ and let $p\in\beta S$. Then
$p\in U(x)$ if and only if for every minimal left ideal $L$ of $\beta S$ and 
every $F\in\pf(R)$, there exists $q_F\in L$ such that for all $t\in F$,
$\widetilde x(tp)=\widetilde x(tq_Fp)$.\end{lemma}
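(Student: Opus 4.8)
The plan is to deduce both directions from the characterization in Lemma~\ref{charUx}, which already describes $U(x)$ via the existence, inside a given minimal left ideal $L$, of an element $q$ satisfying $\widetilde x(tp)=\widetilde x(tqp)$ for \emph{all} $t\in R$. The forward direction is then immediate: if $p\in U(x)$, then for any minimal left ideal $L$ we obtain such a $q\in L$ that works for every $t\in R$, so given any $F\in\pf(R)$ we may simply take $q_F=q$, which in particular satisfies the equality for all $t\in F$.

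The substance is the reverse direction, and I would obtain it by a compactness argument. Fix a minimal left ideal $L$; by the listed algebraic properties of $\beta S$, $L$ is compact, hence closed in the Hausdorff space $\beta S$. View $\pf(R)$ as a directed set under inclusion, and let $\langle q_F\rangle_{F\in\pf(R)}$ be the net provided by the hypothesis, so that each $q_F\in L$ and $\widetilde x(tp)=\widetilde x(tq_Fp)$ whenever $t\in F$. Since $L$ is compact, this net has a cluster point $q\in L$. The goal is to show that this single $q$ works for every $t\in R$, for then Lemma~\ref{charUx}(b) gives $p\in U(x)$.

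The key observation is that for each fixed $t\in R$ the map $q\mapsto\widetilde x(tqp)$ is continuous on $\beta S$. Indeed, $\rho_p$ is continuous because $\beta R$ is right topological, $\lambda_t$ is continuous because $t\in R$ lies in the topological center, and $\widetilde x$ is continuous by definition; so $q\mapsto\widetilde x(tqp)$ is the composite $\widetilde x\circ\lambda_t\circ\rho_p$. Consequently $\widetilde x(tqp)$ is a cluster point of the net $\langle\widetilde x(tq_Fp)\rangle_{F\in\pf(R)}$. But for every $F\supseteq\{t\}$ we have $\widetilde x(tq_Fp)=\widetilde x(tp)$, so this net is eventually equal to the constant $\widetilde x(tp)$; in the Hausdorff (indeed discrete) space $\{0,1\}$ its only cluster point is $\widetilde x(tp)$. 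Hence $\widetilde x(tqp)=\widetilde x(tp)$. As $t\in R$ was arbitrary, $q$ satisfies the condition of Lemma~\ref{charUx}(b), and therefore $p\in U(x)$.

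I expect the main obstacle to be the continuity step, specifically verifying that $q\mapsto tqp$ is continuous as a map into $\beta R$ (so that one may legitimately compose with $\widetilde x$): this is where one must correctly invoke both that right multiplication by $p$ is continuous and that left multiplication by the element $t\in R$ is continuous. Once that is in hand, the passage from the finite/pointwise hypothesis to a single universal $q$ is the standard ``a cluster point of an eventually constant net is that constant'' argument.
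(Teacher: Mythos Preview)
Your proof is correct and follows essentially the same route as the paper: the forward direction is immediate from Lemma~\ref{charUx}, and for the converse you take a cluster point in the compact set $L$ of the net $\langle q_F\rangle_{F\in\pf(R)}$ and verify it satisfies condition~(b) of Lemma~\ref{charUx}. The paper leaves the verification as ``routine,'' whereas you have spelled out the continuity of $q\mapsto\widetilde x(tqp)$ and the eventually-constant-net argument, which is exactly what is needed.
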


\begin{proof} The necessity is an immediate consequence of Lemma \ref{charUx}(b).

For the sufficiency, let $L$ be a minimal left ideal of $\beta S$. For
each $F\in\pf(R)$, pick $q_F\in L$ as guaranteed.  Direct $\pf(R)$ by agreeing
that $F<G$ if and only if $F\subseteq G$.  Pick a cluster point
$q\in L$ of the net $\langle q_F\rangle_{F\in\pf(R)}$.  It is then
routine to show that  for all $t\in R$, $\widetilde x(tqp)=
\widetilde x(tp)$ so that by Lemma \ref{charUx}(b), $p\in U(x)$.  
\end{proof}

\begin{theorem}\label{bigcapUx} \begin{itemize}
\item[(1)] $K(\beta S)\subseteq \bigcap_{x\in Z}U(x)$. 
\item[(2)] If  $p\in \bigcap_{x\in Z}\,U(x)$, then, for every minimal left ideal $L$ of $\beta S$,
$\beta Sp=Lp$ and so $\beta Sp$ is a minimal left ideal of
$\beta S$. 
\item[(3)] If $R$ contains a left cancelable element, 
 then $K(\beta S)=\bigcap_{x\in Z}U(x)$.
In particular, if $R$ has a left identity, 
then $K(\beta S)=\bigcap_{x\in Z}U(x)$.
\end{itemize}
\end{theorem}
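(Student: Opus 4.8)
\begin{itemize}
\item[(1)] $K(\beta S)\subseteq \bigcap_{x\in Z}U(x)$.
\item[(2)] If $p\in \bigcap_{x\in Z}\,U(x)$, then, for every minimal left ideal $L$ of $\beta S$, $\beta Sp=Lp$ and so $\beta Sp$ is a minimal left ideal of $\beta S$.
\item[(3)] If $R$ contains a left cancelable element, then $K(\beta S)=\bigcap_{x\in Z}U(x)$. In particular, if $R$ has a left identity, then $K(\beta S)=\bigcap_{x\in Z}U(x)$.
\end{itemize}

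Let me sketch how I would prove this.

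**Proof sketch.**

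Part (1) is immediate from Corollary \ref{U(x)ideal}(3): for every $x \in Z$, $K(\beta S) \subseteq U(x)$.

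Parts (2) and (3) are the substance. Let me think about the approach.

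The key tool is the characterization in Lemma \ref{charUx}: $p \in U(x)$ iff there exists an idempotent $q \in L$ with $\widetilde{x}(tp) = \widetilde{x}(tqp)$ for all $t \in R$.

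**For part (2):** I want to show $\beta S p = L p$ for any minimal left ideal $L$. Since $Lp \subseteq \beta S p$ always, and $Lp$ is a left ideal (being $L$ times $p$... actually $\beta S \cdot Lp = (\beta S L)p \subseteq Lp$ since $L$ is a left ideal). And $Lp$ is the continuous image of compact $L$, hence compact/closed, so it's a left ideal contained in $\beta S p$. To get equality I need $\beta S p \subseteq Lp$.

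The idea: I need to encode membership. The trick with $\xtotwo{R}$ dynamical systems is that the functions $x \in Z$ can serve as "indicator functions" distinguishing ultrafilters. Given $p \in \bigcap_x U(x)$ and an arbitrary $r \in \beta S$, I want to produce $w \in L$ with $rp = wp$.

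For each $x$, since $p \in U(x)$, Lemma \ref{charUx} gives an idempotent $q_x \in L$ with $\widetilde{x}(tp) = \widetilde{x}(tq_x p)$ for all $t$. Here's the subtle point: the $q$ depends on $x$. I'd want to fix a single $q$ working for all $x$ simultaneously, but that's too much to ask.

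Better approach: given $r \in \beta S$, consider the element $rp$ and try to show $rp \in Lp$. Suppose not. Then $rp \notin Lp$, and since $Lp$ is closed, there's a basic clopen set separating them: there is $A \subseteq R$ with $A \in rp$ but $A \notin wp$ for all $w \in L$. Now I construct a specific $x \in Z$ (say $x = \chi_A$, the indicator of $A$ extended appropriately) so that $\widetilde{x}$ detects this. Since $p \in U(x)$, Lemma \ref{charUx}(b) provides $q \in L$ with $\widetilde{x}(tp) = \widetilde{x}(tqp)$ for all $t$. Recall $T_p(x)(t) = \widetilde{x}(tp)$. Setting up $x$ so that evaluating $\widetilde{x}$ at $rp$ versus at elements $tqp$ forces a contradiction with the separation — this is where the work lies. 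I expect to take $x = \chi_{\{u \in R : u \in A\}}$ and exploit that $\widetilde{\chi_A}(v) = 1 \iff A \in v$, so the condition becomes $A \in tp \iff A \in tqp$ for all $t \in R$. The hard part is bridging between $r \in \beta S$ (an ultrafilter) and the quantifier over $t \in R$ (elements); I'd approach via taking limits $t \to r$ and using continuity of $\rho_p$, or directly constructing $x$ from the separating set.

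**For part (3):** Once (2) holds, I have $\beta S p$ is a minimal left ideal. With a left cancelable element $z \in R$ (hence in $\beta R$, but I need it interacting with $\beta S$ — note $z \in R$ but $S$-multiplication is what matters), I mimic the final paragraph of Theorem \ref{charunifrec}. Pick an idempotent $u \in \beta S p$; then $zp \in z \cdot \beta S p$... I need $zp \in \beta S p$, which requires care since $z \in R$ not $S$. Actually the left cancelable element should give, via \cite[Lemma 1.30]{HS}, that $zp = zpu$ forcing $p = pu \in \beta S p \subseteq K(\beta S)$.

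**The main obstacle** I anticipate is in part (2): choosing the right test function $x \in Z$ and correctly translating the pointwise condition "$\widetilde{x}(tp) = \widetilde{x}(tqp)$ for all $t \in R$" into the ultrafilter statement "$rp = wp$ for some $w \in L$." The quantification over $t \in R$ (elements of the semigroup) rather than over all of $\beta R$ is exactly what the $\xtotwo{R}$ construction is designed to handle, but pinning down which $A \subseteq R$ to use as the test set, and verifying the separation argument goes through, is the delicate step.
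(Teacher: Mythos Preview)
Your sketch has the right ingredients but is missing the key maneuver that makes part (2) work, and this gap cascades into part (3).

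\textbf{Part (2).} You correctly identify the obstacle: Lemma \ref{charUx} gives $\widetilde{x}(tp)=\widetilde{x}(tqp)$ only for $t\in R$, not for $t=r\in\beta S$, so you cannot plug in an arbitrary ultrafilter $r$ and separate $rp$ from $Lp$ directly. The resolution, which you gesture at but do not commit to, is to \emph{first} prove the statement pointwise for elements of $R$, and \emph{then} pass to the closure. Concretely: fix $t\in R$ and show $tp\in tLp$. If not, choose $A\subseteq R$ with $A\in tp$ and $\overline{A}\cap tLp=\emptyset$, set $x=\cchi_A$, and apply Lemma \ref{charUx} to get $q\in L$ with $\widetilde{x}(tp)=\widetilde{x}(tqp)$; but $\widetilde{x}(tp)=1$ while $tqp\in tLp$ forces $\widetilde{x}(tqp)=0$, a contradiction. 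Once $tp\in tLp\subseteq Lp$ for every $t\in S$, you have $Sp\subseteq Lp$, and since $Lp$ is closed, $\beta Sp=\cl_{\beta S}(Sp)\subseteq Lp$. Equality follows because $Lp$ is a minimal left ideal of $\beta S$.

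\textbf{Part (3).} Your proposed argument (pick an idempotent $u\in\beta Sp$, use $zp=zpu$ to cancel) requires $zp\in\beta Sp$, which you yourself flag as problematic when $z\in R\setminus S$; indeed in the motivating case $R=S\cup\{e\}$ this fails. The clean route reuses the intermediate claim from part (2): for the left-cancelable element $t\in R$ we have $tp\in tLp$, so $tp=tqp$ for some $q\in L$. Since $t$ is left cancelable in $\beta R$, this gives $p=qp\in Lp\subseteq K(\beta S)$ directly, with no need to locate $tp$ inside $\beta Sp$.
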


\begin{proof}  (1) $K(\beta S)\subseteq\bigcap_{x\in Z}U(x)$ by Corollary \ref{U(x)ideal}(3). 

(2) Assume that $p\in \bigcap_{x\in Z}\,U(x)$. Let $L$ be a minimal left ideal of $\beta S$.
We shall show that, for every $t\in R$, $tp\in tLp$. To see this, assume the contrary. Then for some
$t\in R$, there exists $A\subseteq R$ such that $A\in tp$ and $\overline A\cap tLp=\emptyset$.
Let $x=\cchi_A$. So $\widetilde x$ is the characteristic function of 
$\overline A$. Since $p\in U(x)$, it follows from Lemma \ref{charUx}
that $\widetilde x(tp)=\widetilde x(tqp)$ for some $q\in L$. However, $\widetilde x(tp)=1$ and 
$\widetilde x(tqp)=0$. This contradiction establishes that $tp\in tLp$ for every $t\in R$. In particular, 
$\beta Sp=\cl_{\beta S}Sp\subseteq Lp$. So $\beta Sp\subseteq Lp$. By \cite[Theorem 1.46]{HS},
$Lp$ is a minimal left ideal of $\beta S$, and so $\beta Sp=Lp$.

(3) Now suppose that $R$ contains a left cancelable element $t$ and let $p\in \bigcap_{x\in Z}U(x)$
Since $t$ is left cancelable in $\beta R$ by
\cite[Lemma 8.1]{HS} and $tp=tqp$ for some $q\in L$, it follows that $p=qp\in K(\beta S)$.
\end{proof}

Recall that a subset $A$ of a semigroup $S$ is {\it piecewise syndetic\/} if and only if
there is some $G\in\pf(S)$ such that for every $F\in\pf(S)$, there is some
$x\in S$ with $Fx\subseteq\bigcup_{t\in G}t^{-1}A$.  The important fact about
piecewise syndetic sets is that they are the subsets of $S$ whose closure meets
$K(\beta S)$, \cite[Theorem 4.40]{HS}.

\begin{definition} 
$\Omega=\Omega_Z=\{x\in Z:\overline{x^{-1}[\{1\}]\cap S}\cap K(\beta S)=\emp\}$.
\end{definition}

Thus $\Omega=\{x\in Z:x^{-1}[\{1\}]\cap S\hbox{ is not piecewise syndetic in }S\}$.
Note that, since $K(\beta S)$ is usually not topologically closed,
we have by Theorem \ref{bigcapUx} that not all sets of the form
$U(x)$ are closed.

\begin{definition} Let $x\in Z$.
$N(x)=\{p\in \beta S:(\forall t\in R)(T_p(x)(t)=0)\}$.
\end{definition}

\begin{lemma}\label{UequalsN} Let $x\in Z$. Then $N(x)$ is closed
and $N(x)\subseteq U(x)$. If $N(x)=U(x)$, then $x\in \Omega$.
If $S$ is a left ideal of $R$, then $N(x)=U(x)$ if and only if $x\in\Omega$.
\end{lemma}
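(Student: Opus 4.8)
\textbf{Proof plan for Lemma \ref{UequalsN}.}

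The lemma has three assertions. First, that $N(x)$ is closed and $N(x)\subseteq U(x)$. The closedness is the most routine part: recalling that $T_p(x)(t)=\widetilde x(tp)$, the condition defining $N(x)$ is that $\widetilde x(tp)=0$ for every $t\in R$, i.e. $p\in\bigcap_{t\in R}\lambda_t^{-1}\big[\widetilde x^{-1}[\{0\}]\big]$ after translating through the continuous maps $p\mapsto tp$; since each $\widetilde x$ is continuous and each left-translation-like map $p\mapsto tp$ into $\beta R$ is continuous, $N(x)$ is an intersection of closed sets, hence closed. For $N(x)\subseteq U(x)$, I would observe that if $p\in N(x)$ then $T_p(x)$ is the constant function $0$. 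A constant point is fixed by every $T_s$, hence by every $T_q$, so in particular $T_q\big(T_p(x)\big)=T_p(x)$ for any idempotent $q$ in a minimal left ideal $L$; then Lemma \ref{uniformrecurrence}(b) (or directly Lemma \ref{charUx}(b)) gives that $T_p(x)$ is uniformly recurrent, so $p\in U(x)$.

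Second, the claim that $N(x)=U(x)$ implies $x\in\Omega$. I would argue contrapositively: suppose $x\notin\Omega$, so $x^{-1}[\{1\}]\cap S$ is piecewise syndetic, meaning $\overline{x^{-1}[\{1\}]\cap S}\cap K(\beta S)\neq\emp$. The plan is to produce a point $p\in U(x)\setminus N(x)$. A natural candidate is a point $p\in K(\beta S)$ with $x^{-1}[\{1\}]\cap S\in p$; such $p$ lies in $U(x)$ by Theorem \ref{bigcapUx}(1) (indeed $K(\beta S)\subseteq U(x)$), and I would check that it fails to be in $N(x)$ because taking $t$ to be an identity-like element — or more carefully, examining $\widetilde x(p)=T_p(x)(e)$ when an identity $e$ is available, and otherwise working with $\widetilde x(tp)$ for a suitable $t$ — yields a value $1$. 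The delicate point here is that $R$ need not have an identity, so I cannot simply read off $\widetilde x(p)$; I expect to choose $p$ so that $x^{-1}[\{1\}]\cap S\in p$ and then show $T_p(x)$ is not identically zero by evaluating at an appropriate $t\in R$ (for instance using that $S$ is nonempty and that membership of $x^{-1}[\{1\}]\cap S$ in $p$ forces some coordinate of $T_p(x)$ to be $1$). This is the step I anticipate being the main obstacle, precisely because of the absence of an identity in $R$ and the need to relate $\widetilde x(tp)$ back to $x^{-1}[\{1\}]$.

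Third, under the hypothesis that $S$ is a left ideal of $R$, I must prove the reverse implication $x\in\Omega\Rightarrow N(x)=U(x)$, which together with the second assertion gives the stated equivalence. Given the inclusion $N(x)\subseteq U(x)$ already in hand, it suffices to show $U(x)\subseteq N(x)$ when $x\in\Omega$. So let $p\in U(x)$; by Lemma \ref{charUx}(b) there is $q$ in a minimal left ideal $L$ with $\widetilde x(tp)=\widetilde x(tqp)$ for all $t\in R$. The key leverage from $S$ being a left ideal of $R$ is that for every $t\in R$ and every $s\in S$ we have $ts\in S$, so the products $tp$ and $tqp$ all lie in $\beta S$ (indeed in $\overline S$), and moreover $tqp\in Lp\subseteq K(\beta S)$. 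I would then use that $x\in\Omega$ means $\overline{x^{-1}[\{1\}]\cap S}$ misses $K(\beta S)$: since $tqp$ is in $\overline S\cap K(\beta S)$, the characteristic-function value $\widetilde x(tqp)$ must be $0$ because $tqp$ cannot lie in $\overline{x^{-1}[\{1\}]}$. Hence $\widetilde x(tp)=0$ for all $t\in R$, which is exactly $p\in N(x)$. The care needed is in verifying that $\widetilde x(r)=0$ for $r\in K(\beta S)\cap\overline S$ follows from $x\in\Omega$ — this uses that $\widetilde x$ restricted to $\overline S$ is the characteristic function of $\overline{x^{-1}[\{1\}]\cap S}$, so its vanishing on $K(\beta S)$ is equivalent to the disjointness defining $\Omega$.
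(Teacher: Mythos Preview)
Your treatment of the first assertion is correct and essentially the paper's.

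For $N(x)=U(x)\Rightarrow x\in\Omega$ you have the right setup but, as you yourself acknowledge, a real gap: having chosen $p\in K(\beta S)\cap\overline A$ where $A=x^{-1}[\{1\}]\cap S$, you need some $t\in R$ with $T_p(x)(t)=1$, i.e.\ with $t^{-1}A\in p$, and no identity in $R$ is available to hand you one. The paper supplies the missing idea: since $K(\beta S)$ is a union of groups there is $q\in K(\beta S)$ with $qp=p$, whence $A\in p=qp$ gives $\{t\in S:t^{-1}A\in p\}\in q$; any $t$ in this nonempty set satisfies $t^{-1}A\subseteq\{s:x(ts)=1\}\in p$, so $T_p(x)(t)=1$, contradicting $p\in N(x)$.

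For $x\in\Omega\Rightarrow U(x)\subseteq N(x)$ under the left-ideal hypothesis, your plan works and is arguably cleaner than the paper's, but the assertion $tqp\in Lp\subseteq K(\beta S)$ needs a word of justification: $t\in R\setminus S$ is not a priori known to carry $L$ into itself. The fix is short --- take any idempotent $u\in L$; minimality of $L$ gives $qu=q$, so $tq=(tq)u\in\beta S\cdot L\subseteq L$ (using that $\beta S$ is a left ideal of $\beta R$ to place $tq$ in $\beta S$). With $tqp\in K(\beta S)$ secured, $\widetilde x(tqp)=0$ follows from $x\in\Omega$ exactly as you say. The paper proceeds differently: it takes $q$ idempotent via Lemma~\ref{charUx}(c), writes $tqp=(tq)(qp)$, and uses continuity together with density of $S$ in $\beta S$ to find $s\in S$ with $tsqp\in\overline A$; since $ts\in S$ (this is where the paper invokes the left-ideal hypothesis), $tsqp\in K(\beta S)$, contradicting $x\in\Omega$. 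Your route bypasses this approximation step entirely.
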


\begin{proof}
To see that $N(x)$ is closed, let $p\in\beta S\setminus N(x)$, pick 
$t\in R$ such that $T_p(x)(t)=1\}$, and let $A=\{s\in S:T_s(x)(t)=1\}$. Then
$A\in p$ and $\overline A\cap N(x)=\emp$.

If $T_p(x)$ is constantly equal to $0$ on $R$, then $T_p(x)$ is uniformly
recurrent and thus $p\in U(x)$.

Let $A=x^{-1}[\{1\}]\cap S$.

First assume that $N(x)=U(x)$
and suppose that $x\notin\Omega$.  Since
$\overline A\cap K(\beta S)\neq \emptyset $, pick $p\in\overline A\cap K(\beta S)$. 
By Corollary \ref{U(x)ideal}(3), $p\in U(x)$ and
so for all $t\in R$, $T_p(x)(t)=0$.   Since $K(\beta S)$ is a union of groups, there exists
$q\in K(\beta S)$ such that $qp=p$. Pick $t\in S$ such that $t^{-1}A\in p$.
Also $T_p(x)(t)=0$ so $\{s\in S:x(ts)=0\}\in p$. Pick $s\in t^{-1}A$ such that
$x(ts)=0$, a contradiction.

Now assume that $S$ is a left ideal in $R$. Let $x\in\Omega$ and let $p\in U(x)$. We claim that $p\in N(x)$.
  To see this, suppose we have some $t\in R$ such that
$T_p(x)(t)=1$.  By Lemma \ref{charUx}, there exists an idempotent $q\in K(\beta S)$ such that
$\widetilde x(tqp)=1$. By \cite[Theorem 2.17]{HS}, $\beta S$ is a left ideal of $\beta R$ so
$tqp\in\beta S$ and so $A\in tqp=tqqp$.  Thus there is some $s\in S$ such that
$tsqp\in\overline A$.  Since $ts\in S$, $tsqp\in K(\beta S)$, a contradiction.
 \end{proof}

\begin{lemma}\label{bigcapclUx}  Let $p\in \bigcap_{x\in\Omega}U(x)$ and let $t\in R$.  If $tp\in \beta S$,
 then $tp\in \cl K(\beta S)$. In particular, $\beta Sp\subseteq \cl K(\beta S)$.
\end{lemma}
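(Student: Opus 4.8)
The plan is to prove $tp \in \cl K(\beta S)$ by showing that every member of the ultrafilter $tp$ is piecewise syndetic in $S$; by \cite[Theorem 4.40]{HS} this is exactly the assertion $tp \in \cl K(\beta S)$, since $w \in \cl K(\beta S)$ if and only if every $A \in w$ has $\overline A \cap K(\beta S) \neq \emp$ (the open set $\overline A$ meets the closed set $\cl K(\beta S)$ iff it meets $K(\beta S)$). So I would fix $A \in tp$ with $A \subseteq S$ and aim to show $A$ is piecewise syndetic. If it already is, there is nothing to prove; otherwise $x := \chi_A$ (as a function on $R$, so $x^{-1}[\{1\}] \cap S = A$) lies in $\Omega$. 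By hypothesis $p \in U(x)$, so $z := T_p(x)$ is uniformly recurrent, and $z(t) = \widetilde x(tp) = 1$ because $A \in tp$.

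Next I would extract a syndetic return set. Applying uniform recurrence of $z$ to the subbasic neighbourhood $\pi_t^{-1}[\{1\}]$ (a neighbourhood of $z$ since $z(t)=1$) shows that $D := \{s \in S : z(ts) = 1\} = \{s \in S : A \in tsp\}$ is syndetic, hence piecewise syndetic, so I may pick $r \in \overline D \cap K(\beta S)$. Writing $C = t^{-1}A \cap S$, a routine computation gives $A \in tsp \iff C \in sp \iff s^{-1}C \in p$, so $D = \{s : s^{-1}C \in p\}$ and $D \in r$ forces $C \in rp$; since $C \subseteq t^{-1}A$ this yields $A \in t(rp) = trp$. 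Because $A \subseteq S$ and $A \in trp$ we also get $trp \in \beta S$, while $r \in K(\beta S)$ gives $rp \in K(\beta S)$.

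Everything then reduces to showing $trp \in \cl K(\beta S)$, for then $A \in trp$ forces $A$ piecewise syndetic, the desired contradiction. When $t \in S$ this is immediate, since $trp = t(rp) \in \beta S\, K(\beta S) \subseteq K(\beta S)$, so $trp \in \overline A \cap K(\beta S)$. This already settles the ``in particular'' clause: it proves $sp \in \cl K(\beta S)$ for every $s \in S$, whence $\beta S p = \cl_{\beta S}(Sp) \subseteq \cl K(\beta S)$.

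The main obstacle is the general case $t \in R \setminus S$: here $trp = t(rp)$ arises from $rp \in K(\beta S)$ by \emph{left} multiplication by $t$, and $\lambda_t$ for $t \in R \setminus S$ need not map $K(\beta S)$ into itself (indeed $trp$ may leave $K(\beta S)$ even though it lands in $\beta S$), so the one-line finish above fails. This is precisely where the hypothesis $tp \in \beta S$ must enter. I would try to exploit it by passing to an idempotent $q \in L$ with $T_q z = z$ from Lemma \ref{uniformrecurrence}(c) and Lemma \ref{charUx}: evaluating $z = T_q z$ at the coordinate $t$ yields $D \in q$, and the relation $\widetilde x(vp) = \widetilde x(vqp)$ extends by continuity to all $v \in \beta R$, reducing matters to finding $v \in \beta S$ with $A \in vp$ (equivalently $A \in vqp \in \beta S\,qp \subseteq K(\beta S)$), i.e.\ to $tp \in \beta S p$. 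Producing such a $v$ is the crux, and I expect it to require the full strength of the hypothesis that $p \in U(x)$ for \emph{every} $x \in \Omega$ together with $t^{-1}S \cap S \in p$. I note that in the two cases of real interest, $R = S$ and $R = S \cup \{e\}$, the set $S$ is a left ideal of $R$, so by \cite[Theorem 2.17]{HS} $\beta S$ is a left ideal of $\beta R$ and $K(\beta S) = K(\beta R)$; then $\lambda_t$ preserves $K(\beta R)$, so $trp \in K(\beta S)$ and the obstacle disappears.
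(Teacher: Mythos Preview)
Your argument is correct and complete for $t\in S$ (and hence for the ``in particular'' clause), and up to that point it runs essentially parallel to the paper's proof: both start by assuming $A\in tp$ is not piecewise syndetic, set $x=\chi_A\in\Omega$, and use $p\in U(x)$ to produce some $r\in K(\beta S)$ with $A\in trp$ (you via the syndetic return set $D$ and a point of $\overline D\cap K(\beta S)$; the paper directly via Lemma~\ref{charUx}(b), which gives $q\in L$ with $\widetilde x(tqp)=\widetilde x(tp)=1$). Both routes land in the same place: an element $r\in K(\beta S)$ with $A\in trp$ and hence $trp\in\beta S$.

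The genuine gap is exactly where you say it is: for $t\in R\setminus S$ you need $trp\in K(\beta S)$ (or at least $\overline A\cap K(\beta S)\neq\emp$), and your proposed reduction to $tp\in\beta Sp$ is both stronger than required and not something you establish. The paper closes this gap with a short algebraic trick that does \emph{not} need $tp\in\beta Sp$ or any ``left ideal'' hypothesis on $S\subseteq R$. Having $r\in K(\beta S)$ and $trp\in\beta S$, consider
\[
I=\{v\in\beta S:trv\in\beta S\}.
\]
This set contains $p$ (since $trp\in\beta S$) and is a right ideal of $\beta S$ (if $trv\in\beta S$ and $w\in\beta S$ then $tr(vw)=(trv)w\in\beta S$). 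Since $\beta Sr$ is a minimal left ideal (as $r\in K(\beta S)$), $I$ meets it, and the intersection contains an idempotent $u$ (take a minimal right ideal inside $I$; its intersection with $\beta Sr$ is a group). Minimality of $\beta Sr$ gives $\beta Su=\beta Sr$, so $r\in\beta Su$ and hence $ru=r$. Then
\[
trp=truup=(tru)(up),
\]
with $tru\in\beta S$ (because $u\in I$) and $up\in K(\beta S)$ (because $u\in\beta Sr\subseteq K(\beta S)$). Thus $trp\in\beta S\cdot K(\beta S)\subseteq K(\beta S)$, giving $A\in trp\in K(\beta S)$, the desired contradiction. This is the missing idea; once you insert it, your argument is complete in full generality, not just in the two special cases you flagged.
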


\begin{proof} 
 Assume that $tp\in \beta S\setminus \cl(K\beta S)$.
 We can choose $A\in tp$ such that $A\subseteq S$ and
$\overline A\cap K(\beta S)=\emptyset$. Let $x$ be the characteristic function of $A$
in $R$, so that $x\in \Omega$ and hence $p\in U(x)$. Observe that
$\wdd x$ is the characteristic funcion of $\cl_{\beta R}(A)$ in $\beta R$ and 
that $\cl_{\beta R}(A)\subseteq \beta S$, because $\beta S$ is
clopen in $\beta R$. Since $\wdd x(tp)=1$, it follows from Lemma \ref{charUx}(b) that
there exists $q\in K(\beta S)$ such that $\wdd x(tqp)=1$, and so $A\in tqp$. Now
$\{r\in \beta S:tqr\in \beta S\}$ is non-empty and is a right ideal of $\beta S$. 
There exists an idempotent $u$ in the intersection of this right ideal with the left ideal 
$\beta Sq$ of $\beta S$. Since $q\in \beta Su$, $qu=q$. So $tqp=tquup\in K(\beta S)$,
because $tqu\in\beta S$ and $u\in\beta Sq\subseteq K(\beta S)$. 
This contradicts the assumption that $\overline A\cap K(\beta S)=\emptyset$.
\end{proof}

\begin{corollary}\label{clKbS}   Each of the following statements 
implies that $\bigcap_{x\in \Omega}U(x)\subseteq \cl K(\beta S)$.
\begin{itemize}
\item[(a)]  There exists $e\in R$ such that $es=s$ for every $s\in S$.
\item[(b)] $S$ contains a left cancelable element.
\end{itemize}
\end{corollary}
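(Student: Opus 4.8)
The plan is to apply Lemma~\ref{bigcapclUx}, which already does the heavy lifting: it tells us that if $p\in\bigcap_{x\in\Omega}U(x)$ and $t\in R$ with $tp\in\beta S$, then $tp\in\cl K(\beta S)$. So for each of the two hypotheses, the task reduces to producing, from the given structure on $R$, some element $t\in R$ for which $tp\in\beta S$, and then identifying $tp$ with $p$ itself (or relating it to $p$) so that we conclude $p\in\cl K(\beta S)$. Thus I would fix $p\in\bigcap_{x\in\Omega}U(x)$ at the outset and handle the two cases separately.

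For case (a), suppose $e\in R$ satisfies $es=s$ for every $s\in S$. First I would check that $ep=p$. Since multiplication on the left by the point $e\in R$ is continuous on $\beta R$ (as $e\in R\subseteq$ topological center), $\lambda_e$ is the continuous extension of $\lambda_e\restriction S$, which is the identity on $S$; hence $\lambda_e$ is the identity on $\cl_{\beta R}S=\beta S$, giving $ep=p$. In particular $ep=p\in\beta S$, so taking $t=e$ in Lemma~\ref{bigcapclUx} yields $p=ep\in\cl K(\beta S)$. This is clean because the left-identity property forces $tp\in\beta S$ automatically.

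For case (b), suppose $S$ contains a left cancelable element $t$. Here $t\in S$, so $tp\in\beta Sp\subseteq\beta S$ trivially, and Lemma~\ref{bigcapclUx} immediately gives $tp\in\cl K(\beta S)$. The remaining work is to pass from $tp\in\cl K(\beta S)$ back to $p\in\cl K(\beta S)$. Since $t$ is left cancelable in $S$, it is left cancelable in $\beta R$ by \cite[Lemma~8.1]{HS}, so $\lambda_t$ is injective; moreover $\lambda_t\restriction\beta S$ is a continuous injection of the compact space $\beta S$ into $\beta R$, hence a homeomorphism onto its image $t\beta S$. Applying $\lambda_t^{-1}$ (continuous on $t\beta S$) to the inclusion $t\cl K(\beta S)\subseteq\cl(t K(\beta S))$ and using that $t$ maps $K(\beta S)$ into $K(\beta S)$, I would argue that $tp\in\cl K(\beta S)$ forces $p\in\cl K(\beta S)$.

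The main obstacle I anticipate is exactly this last step in case (b): recovering $p$ from $tp$. The danger is that $tp$ lands in $\cl K(\beta S)$ for reasons unrelated to $p$ itself, so I must use left cancelability carefully. The key point is that $\lambda_t$, being a homeomorphism from $\beta S$ onto the compact set $t\beta S$, carries the closure of $K(\beta S)$ in $\beta S$ onto the closure of $tK(\beta S)$ within $t\beta S$; combining $tp\in\cl K(\beta S)\cap t\beta S$ with $tK(\beta S)\subseteq K(\beta S)$ and injectivity of $\lambda_t$ should pin down $p\in\cl K(\beta S)$. Case (a) should be routine, so essentially all the care goes into making this cancellation-and-closure argument precise.
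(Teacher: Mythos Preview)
Your treatment of case (a) is correct and matches the paper's argument. The gap is in case (b): the passage from $tp\in\cl K(\beta S)$ back to $p\in\cl K(\beta S)$ is not completed by the ingredients you list. You have $tp\in\cl K(\beta S)\cap t\beta S$, and you want $tp\in t\,\cl K(\beta S)$ so that left cancelability finishes the job. Since $\lambda_t$ is a homeomorphism onto $t\beta S$, you correctly note that $t\,\cl K(\beta S)=\cl(tK(\beta S))$. But the inclusion you invoke, $tK(\beta S)\subseteq K(\beta S)$, only yields $\cl(tK(\beta S))\subseteq\cl K(\beta S)\cap t\beta S$, which is the wrong direction: it does \emph{not} let you conclude that a point of $\cl K(\beta S)\cap t\beta S$ lies in $\cl(tK(\beta S))$.

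What is actually needed is the reverse containment $K(\beta S)\cap t\beta S\subseteq tK(\beta S)$, together with the observation that $t\beta S=\overline{tS}$ is clopen in $\beta S$ (so that $\cl K(\beta S)\cap t\beta S=\cl\big(K(\beta S)\cap t\beta S\big)$). The paper supplies the missing inclusion by an idempotent trick: if $q=tv\in K(\beta S)$, choose an idempotent $u\in K(\beta S)$ with $qu=q$; then $tvu=tv$, and left cancelability of $t$ gives $v=vu\in K(\beta S)$, so $q\in tK(\beta S)$. With that in hand, $tp\in\cl K(\beta S)\cap t\beta S=\cl\big(K(\beta S)\cap t\beta S\big)\subseteq\cl\big(tK(\beta S)\big)=t\,\cl K(\beta S)$, and cancelling $t$ gives $p\in\cl K(\beta S)$. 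Your outline is on the right track, but without this step the argument does not close.
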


\begin{proof} It follows from Lemma \ref{bigcapclUx} that 
(a) implies that  $\bigcap_{x\in \Omega}U(x)\subseteq \cl K(\beta S)$.
So assume that $s$ is a left cancelable element in $S$ and let
$p\in\bigcap_{x\in \Omega}U(x)$. By \cite[Lemma 8.1]{HS}, $s$ is left cancelable in
$\beta S$.
By Lemma \ref{bigcapclUx}, $sp\in \cl K(\beta S)$. Now 
$s\beta S=\overline{sS}$ is clopen in $\beta S$. 
So $sp\in \cl(K(\beta S)\cap s\beta S)$. 
We claim that, if $q\in K(\beta S)\cap s\beta S$, then $q\in sK(\beta S)$. 
To see this, suppose that $q\in K(\beta S)$ and that  $q=sv$ for some $v\in \beta S$. 
There is an idempotent $u\in K(\beta S)$ for which $qu=q$. This 
implies that $sv=svu$ and hence that $v=vu\in K(\beta S)$. 
So $sp\in \cl\big(sK(\beta S)\big)= s\cl K(\beta S)$ and hence $p\in \cl K (\beta S)$.
\end{proof}

\begin{corollary}\label{bigcapUandclK}  Assume that $S$ is a left ideal of $R$.
 Then each of the hypotheses (a) and (b) of Corollary {\rm \ref{clKbS}} implies that
$\bigcap_{x\in \Omega}U(x)=\cl K(\beta S)$.
\end{corollary}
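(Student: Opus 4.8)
The plan is to prove the equality by establishing two inclusions, one of which is already in hand. The inclusion $\bigcap_{x\in \Omega}U(x)\subseteq \cl K(\beta S)$ is precisely the conclusion of Corollary \ref{clKbS}, which holds under either hypothesis (a) or hypothesis (b). So the only genuinely new content is the reverse inclusion $\cl K(\beta S)\subseteq\bigcap_{x\in \Omega}U(x)$, and I expect this to require nothing beyond the standing assumption that $S$ is a left ideal of $R$.

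For that reverse inclusion I would fix $x\in\Omega$ and exploit the last clause of Lemma \ref{UequalsN}: since $S$ is a left ideal of $R$ and $x\in\Omega$, we have $U(x)=N(x)$, and the same lemma asserts that $N(x)$ is closed. Hence every $U(x)$ with $x\in\Omega$ is a closed subset of $\beta S$. On the other hand, Corollary \ref{U(x)ideal}(3) gives $K(\beta S)\subseteq U(x)$. Taking closures and using that $U(x)$ is closed yields $\cl K(\beta S)\subseteq U(x)$, and intersecting over all $x\in\Omega$ gives $\cl K(\beta S)\subseteq\bigcap_{x\in\Omega}U(x)$.

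Combining this with Corollary \ref{clKbS} produces the stated equality under either hypothesis (a) or (b). I do not anticipate a real obstacle: the whole argument collapses to the single structural fact that each $U(x)$ for $x\in\Omega$ is closed—which is exactly where the left-ideal hypothesis is used, via Lemma \ref{UequalsN}—after which closing up the elementary containment $K(\beta S)\subseteq U(x)$ does all the remaining work. The hypotheses (a) and (b) play no role in the reverse inclusion; they enter only through the previously established forward inclusion of Corollary \ref{clKbS}.
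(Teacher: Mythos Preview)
Your proposal is correct and follows essentially the same approach as the paper: the forward inclusion comes from Corollary~\ref{clKbS}, and the reverse inclusion is obtained by invoking Lemma~\ref{UequalsN} (using that $S$ is a left ideal of $R$) to see that $U(x)=N(x)$ is closed for $x\in\Omega$, then closing up the containment $K(\beta S)\subseteq U(x)$ from Corollary~\ref{U(x)ideal}(3).
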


\begin{proof} Assume that one of the hypotheses of Corollary \ref{clKbS} holds.
Then $$\textstyle\bigcap_{x\in \Omega}U(x)\subseteq \cl K(\beta S)\,.$$
To see that $\cl K(\beta S)\subseteq\bigcap_{x\in \Omega}U(x)$, let
$x\in\Omega$ be given. By Lemma \ref{UequalsN}, $U(x)=N(x)$ and so $U(x)$ is closed.
By Corollary \ref{U(x)ideal}(3), $K(\beta S)\subseteq U(x)$ and hence $\cl K(\beta S)\subseteq U(x)$. 
\end{proof}

For the statement of the following corollary we depart from
our standing assumptions about $R$, $S$, and $(Z,\langle T_s\rangle_{s\in S})$.

\begin{corollary} \label{bigcapequal} Let $S$ 
be a semigroup. There exist a dynamical system\break 
$(X,\langle T_s\rangle_{s\in S})$ and a subset
$M$ of $X$ such that $K(\beta S)=\bigcap_{x\in X}U(x)$
and $\cl K(\beta S)\break =\bigcap_{x\in M}U(x)$.
\end{corollary}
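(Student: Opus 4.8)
The plan is to realize both equalities inside a single instance of the dynamical system of Lemma \ref{stotwodyn}, by choosing $R$ so cleverly that the hypotheses of Theorem \ref{bigcapUx}(3) and of Corollary \ref{bigcapUandclK} hold at once. The natural candidate is $R=S\cup\{e\}$, where $e$ is a fresh two-sided identity adjoined to $S$ (fresh even if $S$ already has an identity). I would then take the phase space to be $X=\xtotwo{R}$ equipped with the maps $T_s(x)=x\circ\rho_s$ from Lemma \ref{stotwodyn}, and set $M=\Omega$ as in the definition preceding Lemma \ref{UequalsN}.

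With this choice the first equality is essentially immediate: since $e$ is in particular a left identity of $R$, Theorem \ref{bigcapUx}(3) applies verbatim and yields $K(\beta S)=\bigcap_{x\in X}U(x)$. For the second equality I would verify the two hypotheses of Corollary \ref{bigcapUandclK}. First, $S$ is a left ideal of $R$: for $r\in R$ and $s\in S$, either $r=e$ and $rs=s\in S$, or $r\in S$ and $rs\in S$ by closure of $S$. Second, hypothesis (a) of Corollary \ref{clKbS} holds, witnessed by the very same $e$, since $es=s$ for every $s\in S$. Hence Corollary \ref{bigcapUandclK} gives $\bigcap_{x\in M}U(x)=\bigcap_{x\in\Omega}U(x)=\cl K(\beta S)$, as required.

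I expect essentially no serious obstacle here; the content of the corollary is the observation that the single semigroup $R=S\cup\{e\}$ simultaneously satisfies ``$R$ has a left identity'' (needed for the $K(\beta S)$ identity) and ``$S$ is a left ideal of $R$ together with condition (a)'' (needed for the $\cl K(\beta S)$ identity). The only point deserving a word of care is the departure from the standing assumptions of Section 3 flagged in the statement: one must confirm that $e$ can always be adjoined to form a semigroup and that $S$ is genuinely a \emph{subsemigroup} of $R$, which is routine. Everything else is a direct citation of the two earlier results applied to this particular $R$.
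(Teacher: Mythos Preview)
Your proof is correct and follows essentially the same route as the paper: both invoke Theorem \ref{bigcapUx}(3) and Corollary \ref{bigcapUandclK} for a suitable $R$. The paper's proof splits into the cases ``$S$ has a left identity'' (take $R=S$) and ``$S$ does not'' (adjoin $e$), whereas you uniformly adjoin a fresh identity; the paper itself remarks immediately after its proof that your uniform choice $R=S\cup\{e\}$ would have worked just as well.
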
 

\begin{proof} If $S$ has a left identity, let $R=S$.
Otherwise, let $R=S\cup\{e\}$ where $e$ is an identity
adjoined to $S$. The conclusion then follows from
Theorem \ref{bigcapUx} and Corollary\ref{bigcapUandclK}.
\end{proof}

In the proof of the above corollary, we could have simply
let $R=S\cup\{e\}$ where $e$ is an identity
adjoined to $S$, regardless of whether $S$ has a left identity,
as was done in \cite[Theorem 19.27]{HS} to produce a dynamical
system for any semigroup $S$ establishing the equivalence
of the notions of {\it central\/} and {\it dynamically central\/}.
We shall investigate the relationship between the systems
with phase space $X=\xtotwo{R}$ and $Y=\xtotwo{S}$ in the next
section.

We note that it is possible that 
$\bigcap_{x\in Z}U(x)\neq K(\beta S)$ and there is no subset $M$
of $Z$ such that $\bigcap_{x\in M}U(x)=\cl K(\beta S)$.  To see this,
let $S$ be an infinite zero semigroup.  That is, there is an element
$0\in S$ such that $st=0$ for all $s$ and $t$ in $S$. Then
$pq=0$ for all $p$ and $q$ in $\beta S$ and so 
$\cl K(\beta S)=K(\beta S)=\{0\}$.  Let $R=S$. Given
$x\in T$, if $a=x(0)$, then for all $p\in \beta S$,
$T_p(x)$ is constantly equal to $a$ and so $T_p(x)$ is uniformly recurrent.
That is, for any $x\in Z$, $U(x)=\beta S$.

In \cite{DH} it was shown that $\cl K(\beta\ben)$ is the intersection
of all of the closed two sided ideals that strictly contain it. In a 
similar vein, we would like to show that each $U(x)$ properly contains 
$K(\beta S)$.  One cannot hope for this to hold in general.  For 
example, as we have already noted, if $S$ is either left zero or right zero then so is
$\beta S$ and then $K(\beta S)=\beta S$.
Results establishing that $U(x)$ properly contains $K(\beta S)$
require some weak cancellation assumptions.

\begin{definition}\label{solutionsets} Let $S$ be a semigroup
and let $A\subseteq S$.
\begin{itemize}
\item[(a)] $A$ is a {\it left solution set\/} if and only if
there exist $u$ and $v$ in $S$ such that $A=\{x\in S:ux=v\}$.
\item[(b)] $A$ is a {\it right solution set\/} if and only if
there exist $u$ and $v$ in $S$ such that $A=\{x\in S:xu=v\}$.
\end{itemize}
\end{definition}

As is standard, we denote by $\omega$ the first infinite ordinal, which
is also the first infinite cardinal. That is, $\omega=\aleph_0$.

\begin{definition}\label{weakcanc} Let $S$ be a semigroup
with $|S|=\kappa\geq\omega$.
\begin{itemize}
\item[(a)] $S$ is {\it weakly left cancellative\/} if and only if
every left solution set in $S$ is finite.
\item[(b)] $S$ is {\it weakly right cancellative\/} if and only if
every right solution set in $S$ is finite.
\item[(c)] $S$ is {\it weakly cancellative\/} if and 
only if $S$ is both weakly left cancellative and weakly right cancellative. 
\item[(d)] $S$ is {\it very weakly left cancellative\/} if and only if 
the union of any set of fewer than $\kappa$ left solution sets has cardinality
less than $\kappa$.
\item[(e)] $S$ is {\it very weakly right cancellative\/} if and only if 
the union of any set of fewer than $\kappa$ right solution sets has cardinality
less than $\kappa$.
\item[(f)] $S$ is {\it very weakly cancellative\/} if and 
only if $S$ is both very weakly left cancellative and very weakly right cancellative. 
\end{itemize}
\end{definition}

Given a set $X$ and a cardinal $\kappa$, we let
$U_\kappa(X)$ be the set of $\kappa$-uniform ultrafilters on $X$.
That is, $U_{\kappa}(X)=\{p\in\beta X:(\forall A\in p)(|A|\geq\kappa)\}$.

\begin{theorem}\label{proper} Assume that  $|R|=|S|=\kappa\geq\omega$,  
$S$ is very weakly cancellative, and has the property that
$|\{e\in S:(\exists s\in S)(es=s)\}|<\kappa$.  Then for all $x\in Z$,
$U(x)\cap U_\kappa(S)\setminus \cl K(\beta S)\neq\emp$.
\end{theorem}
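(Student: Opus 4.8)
The plan is to produce, for the given $x\in Z$, a single $\kappa$-uniform ultrafilter $p$ with $T_p(x)$ uniformly recurrent and $p\notin\cl K(\beta S)$. First I would fix a minimal idempotent $u\in K(\beta S)$ and set $w=T_u(x)$, which is uniformly recurrent by Lemma \ref{uniformrecurrence}(f). For each $t\in R$ let $P_t=\{s\in S:x(ts)=\widetilde x(tu)\}$; since $\widetilde x(tu)=u{-}\lim_{s}x(ts)$ we have $P_t\in u$. The point of this choice is that any ultrafilter $p$ with $\{P_t:t\in R\}\subseteq p$ satisfies $\widetilde x(tp)=w(t)$ for every $t\in R$, i.e. $T_p(x)=w$, so that $p\in U(x)$ automatically. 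Thus the whole problem reduces to extending the family $\{P_t:t\in R\}$ to a $\kappa$-uniform ultrafilter lying off $\cl K(\beta S)$.

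For the $\kappa$-uniformity I would record two facts flowing from very weak cancellativity together with $|S|=\kappa$: (i) piecewise syndetic subsets of $S$ have cardinality $\kappa$ (a piecewise syndetic set produces a thick set, thick sets meet $U_\kappa(S)$ hence have size $\kappa$, and preimages $t^{-1}A$ are finite-to-one over $A$), whence $K(\beta S)\subseteq U_\kappa(S)$; and (ii) every finite intersection $B_F=\bigcap_{t\in F}P_t$ (for $F\in\pf(R)$) lies in $u$ and so has cardinality $\kappa$. In particular $\{P_t:t\in R\}$ has the $\kappa$-uniform finite intersection property, so it does extend to $\kappa$-uniform ultrafilters ($u$ being one). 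The only substantive remaining task is to force the ultrafilter to miss $\cl K(\beta S)$, i.e. to contain a non-piecewise-syndetic set. Since $p\notin\cl K(\beta S)$ iff some member of $p$ is not piecewise syndetic, it suffices to produce a non-piecewise-syndetic $D\subseteq S$ with $|D\cap B_F|=\kappa$ for every $F\in\pf(R)$; then $\{P_t:t\in R\}\cup\{D\}$ still has the $\kappa$-uniform finite intersection property and any $\kappa$-uniform ultrafilter extending it is the desired $p$.

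I would build $D$ by transfinite recursion of length $\kappa$. There are at most $\kappa$ distinct sets $B_F$ (as $|R|=\kappa$) and they are downward directed, so one can interleave two lists of length $\kappa$: meeting requirements, which at each stage throw a fresh element of the appropriate $B_F$ into $D$ to guarantee $|D\cap B_F|=\kappa$; and avoidance requirements, indexed by the $\kappa$ many $G\in\pf(S)$, which keep $\bigcup_{t\in G}t^{-1}D$ from being thick so that $D$ never becomes piecewise syndetic. Here the hypotheses do the work: by weak left cancellativity each solution set $\{y:ty=d\}$ is finite, and more generally a union of fewer than $\kappa$ solution sets has size $<\kappa$, so committing an element $d$ to $D$ forbids only $<\kappa$ witnesses $y$ to the avoidance requirements; the bound $|\{e\in S:(\exists s)(es=s)\}|<\kappa$ is where I expect to discard the one family of elements on which this size control could otherwise fail. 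Consequently at every stage of length $<\kappa$ there remain $\kappa$-many legitimate choices inside each $B_F$.

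The main obstacle is exactly the simultaneous maintenance of non-piecewise-syndeticity: the sets $B_F$ that $D$ must meet lie in $u\in K(\beta S)$ and are themselves piecewise syndetic, so $D$ is forced to have large intersection with $\kappa$-many large sets while remaining small in the strong sense $\overline D\cap K(\beta S)=\emp$. For $S=(\ben,+)$ this is the familiar device of choosing the points of $D$ with gaps tending to infinity, which kills piecewise syndeticity yet is compatible with prescribing infinitely many points in each of countably many infinite target sets. The content of the general case is to replace ``gaps tend to infinity'' by the requirement that each complement $\{y:Gy\cap D=\emp\}$ ($G\in\pf(S)$) stay thick, and it is precisely very weak cancellativity together with the smallness of $\{e\in S:(\exists s)(es=s)\}$ that should let the recursion keep all of these complements thick while still hitting every $B_F$ in $\kappa$ points.
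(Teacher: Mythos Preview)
Your framework matches the paper's exactly: fix $u\in K(\beta S)$, set $w=T_u(x)$, observe that any $p$ containing every $B_F=\bigcap_{t\in F}P_t$ has $T_p(x)=w$, and reduce everything to producing one set $D$ that meets every $B_F$ in $\kappa$ points while $\overline D\cap K(\beta S)=\emp$. The gap is precisely at the step you yourself flag as the main obstacle. First a slip: non--piecewise-syndeticity of $D$ means each complement $\{y:Gy\cap D=\emp\}$ is \emph{syndetic}, not thick (thickness of the complement only gives that $\bigcup_{t\in G}t^{-1}D$ is not syndetic, which is strictly weaker than not thick). More importantly, you never supply an avoidance mechanism. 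You note that committing a single $d$ to $D$ removes only $<\kappa$ points from each $C_G=\{y:Gy\cap D=\emp\}$, but syndeticity of $C_G$ is witnessed by a finite $H$ with $S=\bigcup_{h\in H}h^{-1}C_G$, not by individual elements; your recursion neither chooses nor protects any such $H$, and nothing you wrote explains why, after $\kappa$ stages of adding points drawn from piecewise syndetic sets $B_F$, a single $C_G$ remains syndetic. The $(\ben,+)$ gap trick works because one ordering lets the same $H=\{0\}$ serve for every $G$ once gaps exceed $\max G$; there is no evident analogue in an arbitrary very weakly cancellative semigroup, and ``should let the recursion keep all of these complements thick'' is a hope, not a proof.

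The paper sidesteps the combinatorics of syndeticity by an algebraic device you do not mention. It chooses $t_\alpha\in B_{F_\alpha}$ subject to three $FP$-type constraints --- $FP(\langle t_\beta\rangle_{\beta\le\delta})\cap E=\emp$, $t_\delta\notin FP(\langle t_\beta\rangle_{\beta<\delta})$, and $st_\delta\neq t_\gamma$ for earlier $s\in FP$, $\gamma<\delta$ --- all maintainable using very weak left cancellativity and $|E|<\kappa$. Setting $B=\{t_\alpha:\alpha<\kappa\}$ and $C=\bigcap_\alpha\cl FP(\langle t_\beta\rangle_{\alpha<\beta<\kappa})$, one has that $C$ is a compact subsemigroup of $\beta S$, and the $FP$ constraints yield $\overline B\cap K(C)=\emp$ by a short argument with $p=pr$ in $K(C)$. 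Then very weak cancellativity gives $K(\beta S)\subseteq U_\kappa(S)$, so any $p\in\overline B\cap K(\beta S)$ would lie in $C$, forcing $K(C)=K(\beta S)\cap C$ and a contradiction. This $FP$/subsemigroup maneuver is the substantive idea your plan is missing.
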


\begin{proof} Let $E=\{e\in S:(\exists s\in S)(es=s)\}$.
 Let $x\in Z$ and pick $q\in K(\beta S)$.  Let $y=T_q(x)$.  By
Corollary \ref{U(x)ideal}(3), $y$ is uniformly recurrent.  For
each $F\in\pf(R)$, let $B_F=\big\{s\in S:(\forall t\in F)\big(x(ts)=y(t)\big)\big\}$.
Since $$\textstyle B_F=\{s\in S:T_s(x)\in\bigcap_{t\in F}\pi_t^{-1}[\{y(t)\}]\}\,,$$ we have
$B_F\in q$.  By \cite[Lemma 6.34.3]{HS},  $K(\beta S)\subseteq U_\kappa(S)$ and so $|B_F|=\kappa$.
Note that if $F\subseteq H$, then $B_H\subseteq B_F$.

Enumerate $\pf(R)$ as $\langle F_\alpha\rangle_{\alpha<\kappa}$.
Choose $t_0\in B_{F_0}\setminus E$.  Let $0<\alpha<\kappa$ and assume that
we have chosen $\langle t_\delta\rangle_{\delta<\alpha}$ satisfying
the following inductive hypotheses.

\begin{itemize}
\item[(1)] For each $\delta<\alpha$, $t_\delta\in B_{F_\delta}$.
\item[(2)] For each $\delta<\alpha$, $FP(\langle t_\beta\rangle_{\beta\leq\delta})\cap E=\emp$.
\item[(3)] For each $\delta<\alpha$, if $\delta>0$, then 
$t_\delta\notin FP(\langle t_\beta\rangle_{\beta<\delta})$.
\item[(4)] For each $\delta<\alpha$, if $\delta>0$,
$s\in FP(\langle t_\beta\rangle_{\beta<\delta})$, and
$\gamma<\delta$, then $st_\delta\neq t_\gamma$.
\end{itemize}

The hypotheses are satisfied for $\delta=0$.  Let 
\begin{eqnarray*}
M_0&=&\textstyle \{t\in S:\big(\exists H\in\pf(\alpha)\big)\big((\prod_{\beta\in H}t_\beta)t\in E\big)\big\}\}
\hbox{\rm\ and let}\\
M_1&=& \{t\in S:\big(\exists s\in FP(\langle t_\beta\rangle_{\beta<\alpha})\big)
(\exists \gamma<\alpha)(st=t_\gamma)\}\,.
\end{eqnarray*}
  
Note that
$|FP(\langle t_\beta\rangle_{\beta<\alpha})|\leq |\pf(\alpha)|<\kappa$.
Also, given $H\in\pf(\alpha)$ and $s\in E$, $\{t\in S:(\prod_{\beta\in H}t_\beta)t=s\}$
is a left solution set so $|M_0|<\kappa$.  Note also that, 
given $s\in FP(\langle t_\beta\rangle_{\beta<\alpha})$ and $\gamma<\alpha$, 
$\{t\in S:st=t_\gamma\}$ is a left solution set so $|M_1|<\kappa$.  Thus
we may choose $t_\alpha\in B_{F_\alpha}\setminus(E\cup FP(\langle t_\beta\rangle_{\beta<\alpha})
\cup M_0\cup M_1)$.  The induction hypotheses are satisfied for $\alpha$.

Let $B=\{t_\alpha:\alpha<\kappa\}$ and let $C=\bigcap_{\alpha<\kappa}\cl FP(\langle t_\beta\rangle_{\alpha<\beta<\kappa})$.
By \cite[Theorem 4.20]{HS}, $C$ is a compact subsemigroup of $\beta S$. We claim that
$\overline B\cap K(C)=\emp$.  Suppose instead that we have
$p\in \overline B\cap K(C)$.  Pick $r\in K(C)$ such that $p=pr$. (By \cite[Lemma 1.30]{HS}, an
idempotent in the minimal left ideal $L$ of $C$ in which $p$ lies will do.)
Let $D=\{s\in S:s^{-1}B\in r\}$.  Then $D\in p$ so $D\cap B\neq\emp$ so
pick $\alpha<\kappa$ such that $t_\alpha^{-1} B\in r$.
Then $\overline{t_\alpha^{-1} B}\cap FP(\langle t_\beta\rangle_{\alpha<\beta<\kappa})\neq
\emp$ so pick finite $H\subseteq\{\beta:\alpha<\beta<\kappa\}$ such that
$\prod_{\beta\in H}t_\beta\in t_{\alpha}^{-1}B$.  Pick
$\gamma<\kappa$ such that $t_\alpha\prod_{\beta\in H}t_\beta=t_\gamma$. Let
$\max H=\mu$ and let $K=H\setminus\{\mu\}$. 
If $K=\emp$, then $t_\alpha t_\mu=t_\gamma$.  If $K\neq\emp$, then
$t_\alpha(\prod_{\beta\in K}t_\beta) t_\mu=t_\gamma$.
 If $\gamma>\mu$ we get
a contradiction to hypothesis (3).  If $\mu=\gamma$ we either
get $t_\alpha\in E$ or $t_\alpha\prod_{\beta\in K}t_\beta\in E$, contradicting
hypothesis (2). If $\gamma<\mu$ we get a contradiction to hypothesis (4).
Thus $\overline B\cap K(C)=\emp$ as claimed.

Now we claim that $\overline B\cap K(\beta S)=\emp$.  Suppose 
instead we have $p\in\overline B\cap K(\beta S)$. By \cite[Lemma 6.34.3]{HS}
we have that $p\in U_\kappa(S)$ and consequently, $p\in C$.
Thus $K(\beta S)\cap C\neq\emp$ and so, by \cite[Theorem 1.65]{HS}, 
$K(C)=K(\beta S)\cap C$, contradicting the fact that $\overline B\cap K(C)=\emp$.
Since $\overline B$ is clopen, we thus have $\overline B\cap \cl K(\beta S)=\emp$.

Now let ${\cal C}=\{B_F:F\in\pf(S)\}\cup\{B\}$.  We claim that 
${\cal C}$ has the $\kappa$-uniform finite intersection property.
To see this, let ${\cal F}\in\pf\big(\pf(S)\big)$ and let
$H=\bigcup{\cal F}$. If $\delta<\kappa$ and
$H\subseteq F_\delta$, then $t_\delta\in B\cap\bigcap_{F\in {\cal F}}B_F$.
Since $|\{\delta<\kappa:H\subseteq F_\delta\}|=|\{F\in\pf(S):H\subseteq F\}|=\kappa$,
we have that $|\bigcap{\cal C}|=\kappa$ as required.  Pick by \cite[Corollary 3.14]{HS}
$p\in U_\kappa(S)$ such that ${\cal C}\subseteq p$.  

Since $B_F\in p$ for each $F\in\pf(R)$, we have $T_p(x)=y$ so $p\in U(x)$. 
Since $B\in p$, $p\notin \cl K(\beta S)$.\end{proof}

\begin{corollary}\label{rightcanc}
Assume that $|R|=|S|=\kappa\geq\omega$ and that 
$S$ is right cancellative and very weakly left cancellative. 
Then for all $x\in Z$,
$U(x)\cap U_\kappa(S)\setminus \cl K(\beta S)\neq\emp$.
\end{corollary}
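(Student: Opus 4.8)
The plan is to deduce this corollary directly from Theorem \ref{proper} by checking that the two hypotheses of that theorem which are not assumed here follow from right cancellativity. Specifically, Theorem \ref{proper} requires that $S$ be very weakly cancellative (so I must upgrade the assumed very weak \emph{left} cancellativity to full very weak cancellativity) and that $|E|<\kappa$, where $E=\{e\in S:(\exists s\in S)(es=s)\}$. Once both are verified, the conclusion is immediate.

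The very weak right cancellativity is easy. Since $S$ is right cancellative, any right solution set $\{x\in S:xu=v\}$ has at most one element: if $xu=v=yu$, then $xu=yu$, so $x=y$. Hence the union of any family of fewer than $\kappa$ right solution sets is a union of at most that many singletons, and so has cardinality $<\kappa$ (using $\kappa\geq\omega$). Thus $S$ is very weakly right cancellative, and together with the assumed very weak left cancellativity, $S$ is very weakly cancellative.

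The main step, and the only one requiring a genuine idea, is the bound $|E|<\kappa$. First I would note that every $e\in E$ is idempotent: if $es=s$, then $e^2s=e(es)=es=s$, so $e^2s=es$, and right cancellation of $s$ gives $e^2=e$. Next, for any two idempotents $e$ and $f$ one has $fe=f$: indeed $(fe)e=f(ee)=fe$, and since $fe=(f)e$ this reads $(fe)e=(f)e$, so right cancellation of $e$ yields $fe=f$. Therefore, if $E=\emptyset$ the bound is trivial, and otherwise I would fix a single $e_0\in E$; every $e\in E$ is idempotent and so satisfies $e_0e=e_0$, whence $E\subseteq\{x\in S:e_0x=e_0\}$. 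The right-hand side is a single left solution set, and very weak left cancellativity forces each individual left solution set to have cardinality $<\kappa$ (one set is a family of fewer than $\kappa$ left solution sets). Hence $|E|<\kappa$.

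With $S$ very weakly cancellative, $|E|<\kappa$, and the assumed $|R|=|S|=\kappa\geq\omega$ all in place, Theorem \ref{proper} applies verbatim and delivers $U(x)\cap U_\kappa(S)\setminus\cl K(\beta S)\neq\emptyset$ for every $x\in Z$. The only delicate point is the idempotent computation together with the observation that all of $E$ collapses into one left solution set; the remaining verifications are routine cardinality bookkeeping straight from the definitions.
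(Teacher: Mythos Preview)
Your proof is correct and follows the same strategy as the paper: reduce to Theorem \ref{proper} by verifying very weak right cancellativity (trivial) and showing $|E|<\kappa$ via containment in a single left solution set. The paper's argument for the containment is a touch shorter: pick any $x\in S$; from $es=s$ one gets $xes=xs$, and right cancellation of $s$ yields $xe=x$, so $E\subseteq\{t\in S:xt=x\}$ directly, without the detour through idempotents.
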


\begin{proof} Let $E=\{e\in S:(\exists s\in S)(es=s)\}$.
It suffices to show that $|E|<\kappa$.  Pick
$x\in S$. Given $e\in E$ and $s\in S$ such that $es=s$,
we have that $xes=xs$ so $xe=x$. Thus
$E$ is contained in the left solution set $\{t\in S:xt=x\}$.
\end{proof}

\begin{corollary}\label{Kproper}
Assume that $|R|=|S|=\kappa\geq\omega$, that 
$S$ is very weakly cancellative, that $S$ has a member $e$ such that $es=s$ for all $s\in S$, 
and $|\{e\in S:(\exists s\in S)(es=s)\}|<\kappa$. Then $K(\beta S)=\bigcap_{x\in Z}U(x)$ and
for each $x\in Z$, $U(x)$ properly contains $K(\beta S)$.
\end{corollary}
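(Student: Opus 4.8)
The plan is to separate the statement into its two claims and derive each from a result already established: the equality $K(\beta S)=\bigcap_{x\in Z}U(x)$ from Theorem \ref{bigcapUx}, and the proper containment from Theorem \ref{proper}. The hypotheses have been arranged so that both apply directly, so the work is essentially one of assembly rather than of new estimates.

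For the equality I would first record that the given left identity $e$ of $S$ is automatically a left identity of $\beta S$: since $e\in S$, the map $\lambda_e$ is continuous on $\beta S$, it agrees with the identity on the dense subset $S$, and hence $ep=p$ for every $p\in\beta S$. The inclusion $K(\beta S)\subseteq\bigcap_{x\in Z}U(x)$ is immediate from Theorem \ref{bigcapUx}(1) (equivalently Corollary \ref{U(x)ideal}(3)). For the reverse inclusion, take $p\in\bigcap_{x\in Z}U(x)$ and fix a minimal left ideal $L$ of $\beta S$. By Theorem \ref{bigcapUx}(2), $\beta Sp=Lp$ is a minimal left ideal of $\beta S$, so $\beta Sp\subseteq K(\beta S)$. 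Since $e\in S$ and $ep=p$, we get $p=ep\in Sp\subseteq\beta Sp\subseteq K(\beta S)$. This yields $\bigcap_{x\in Z}U(x)\subseteq K(\beta S)$ and hence the equality.

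For the proper containment I would simply check that the present hypotheses include all those of Theorem \ref{proper}, namely $|R|=|S|=\kappa\geq\omega$, that $S$ is very weakly cancellative, and that $|\{e\in S:(\exists s\in S)(es=s)\}|<\kappa$; the existence of the left identity is an extra assumption used only in the first claim. Theorem \ref{proper} then supplies, for each $x\in Z$, a point $p\in U(x)\cap U_\kappa(S)$ with $p\notin\cl K(\beta S)$. As $K(\beta S)\subseteq\cl K(\beta S)$, this $p$ lies in $U(x)\setminus K(\beta S)$, while $K(\beta S)\subseteq U(x)$ by Corollary \ref{U(x)ideal}(3); together these give $K(\beta S)\subsetneq U(x)$.

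I do not anticipate a genuine obstacle, since all the depth of the argument resides in Theorem \ref{bigcapUx} and Theorem \ref{proper}. The one step needing care is the passage from a left identity of $S$ to a left identity of $\beta S$, which is exactly what guarantees $p\in\beta Sp$ and so pins $p$ inside the minimal left ideal $\beta Sp\subseteq K(\beta S)$; without it, Theorem \ref{bigcapUx}(2) would locate $\beta Sp$ inside $K(\beta S)$ but not $p$ itself, and the collapse of $\bigcap_{x\in Z}U(x)$ down to $K(\beta S)$ would fail.
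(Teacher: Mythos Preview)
Your proposal is correct and follows essentially the same route as the paper, which simply records that the corollary is an immediate consequence of Theorems \ref{bigcapUx} and \ref{proper}. Your write-up unpacks this citation in detail; the one mild difference is that you derive the equality via parts (1) and (2) of Theorem \ref{bigcapUx} together with the observation $p=ep\in\beta Sp$, rather than invoking part (3) directly---this is arguably a touch more careful, since part (3) is phrased for a left identity (or left cancelable element) of $R$, whereas the hypothesis here gives a left identity only of $S$.
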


\begin{proof} This is an immediate consequence of Theorems \ref{bigcapUx} and 
\ref{proper}. \end{proof}

\begin{corollary}\label{clKproper}
Assume that $S$ is a left ideal 
of $R$, $|R|=|S|=\kappa\geq\omega$, $S$ is very weakly cancellative, 
$S$ has a member $e$ such that $es=s$ for all $s\in S$, 
and $|\{e\in S:(\exists s\in S)(es=s)\}|<\kappa$. Then $\cl K(\beta S)=\bigcap_{x\in \Omega}U(x)$ and
for each $x\in \Omega$, $U(x)$ properly contains $\cl K(\beta S)$.
\end{corollary}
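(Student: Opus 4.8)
The plan is to reduce the corollary entirely to the two substantial results already established, namely Corollary~\ref{bigcapUandclK} for the displayed equality and Theorem~\ref{proper} for the strict containment; no new construction should be needed, only a verification that the standing hypotheses feed correctly into each half of the statement.

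First I would dispose of the equality $\cl K(\beta S)=\bigcap_{x\in\Omega}U(x)$. The hypothesis that $S$ has a member $e$ with $es=s$ for all $s\in S$ is precisely condition (a) of Corollary~\ref{clKbS}, since $e\in S\subseteq R$ witnesses ``there exists $e\in R$ such that $es=s$ for every $s\in S$''. Together with the standing assumption that $S$ is a left ideal of $R$, Corollary~\ref{bigcapUandclK} then yields the equality directly. In particular this gives $\cl K(\beta S)\subseteq U(x)$ for each individual $x\in\Omega$, which is exactly the inclusion I will need for the second half.

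Next I would establish the proper containment. The remaining hypotheses --- $|R|=|S|=\kappa\geq\omega$, $S$ very weakly cancellative, and $|\{e\in S:(\exists s\in S)(es=s)\}|<\kappa$ --- are exactly those of Theorem~\ref{proper}. Since $\Omega\subseteq Z$, that theorem applies to every $x\in\Omega$ and produces a point $p\in U(x)\cap U_\kappa(S)$ with $p\notin\cl K(\beta S)$. Combining $p\in U(x)\setminus\cl K(\beta S)$ with the inclusion $\cl K(\beta S)\subseteq U(x)$ from the previous paragraph gives $\cl K(\beta S)\subsetneq U(x)$ for each $x\in\Omega$, completing the proof.

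I do not anticipate a genuine obstacle here: the content lies entirely in the earlier results, and this corollary is the ``left-ideal, topologically-closed'' analogue of Corollary~\ref{Kproper}. The only point requiring mild care is the bookkeeping of hypotheses --- confirming that the single assumed left identity $e$ activates Corollary~\ref{clKbS}(a), while the separate cardinality bound on the set of all left-identity-like elements is what Theorem~\ref{proper} consumes, so that the two halves of the statement draw on the hypothesis list cleanly and without circularity.
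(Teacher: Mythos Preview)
Your proposal is correct and follows essentially the same route as the paper's proof: invoke Corollary~\ref{bigcapUandclK} (via hypothesis (a) of Corollary~\ref{clKbS}) for the equality, then Theorem~\ref{proper} for the strict containment. Your write-up is in fact slightly more explicit than the paper's, since you spell out why the left identity in $S$ activates condition (a) and why the inclusion $\cl K(\beta S)\subseteq U(x)$ is needed to pass from $U(x)\setminus\cl K(\beta S)\neq\emp$ to proper containment.
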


\begin{proof} By Corollary \ref{bigcapUandclK} $\cl K(\beta S)=\bigcap_{x\in \Omega}U(x)$.
By Theorem \ref{proper}, for each $x\in \Omega$, $U(x)$ properly contains $\cl K(\beta S)$.
\end{proof}

\section{Relations between systems with phase spaces $X$ and $Y$}

Throughout this section we will let $S$ be an arbitrary semigroup and
let $Q=S\cup\{e\}$, where $e$ is an identity adjoined to $S$, even
if $S$ already has an identity.  We will let $(X,\langle T_{X,s}\rangle_{s\in S})$
be the dynamical system of Lemma {\rm \ref{stotwodyn}} determined by $R=Q$
let $(Y,\langle T_{Y,s}\rangle_{s\in S})$
be the dynamical system of Lemma {\rm \ref{stotwodyn}} determined by $R=S$.
For $x\in X$ we will let $U_X(x)=\{p\in\beta S:T_{X,p}(x)$ is uniformly recurrent$\}$
and let $U_Y(x)=\{p\in\beta S:T_{Y,p}(x)$ is uniformly recurrent$\}$.

We have from the results of the previous section that for any semigroup
$S$, $K(\beta S)=\bigcap_{x\in X}U_X(x)$ and $\cl K(\beta S)=\bigcap_{x\in \Omega_X}U_X(x)$.
We are interested in determining when the corresponding assertions hold with
respect to $Y$.  Of course, the simplest situation in which they do is
when for each $x\in X$, $U_X(x)=U_Y(x_{|S})$ so we address this problem first,
beginning with the following simple observation.

\begin{lemma}\label{uxinuy} Let $x\in X$. Then $U_X(x)\subseteq U_Y(x_{|S})$.
\end{lemma}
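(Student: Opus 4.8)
The plan is to unpack the definitions. We have two dynamical systems built from Lemma~\ref{stotwodyn}: the system $(X,\langle T_{X,s}\rangle_{s\in S})$ uses $R=Q=S\cup\{e\}$, so $X={}^{Q}\{0,1\}$, while $(Y,\langle T_{Y,s}\rangle_{s\in S})$ uses $R=S$, so $Y={}^{S}\{0,1\}$. Given $x\in X$, its restriction $x_{|S}$ lies in $Y$, and in both systems $T_s$ is precomposition with $\rho_s$. The goal is to show that if $p\in U_X(x)$, meaning $T_{X,p}(x)$ is uniformly recurrent in $X$, then $p\in U_Y(x_{|S})$, meaning $T_{Y,p}(x_{|S})$ is uniformly recurrent in $Y$.

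**Key observation: the two $T_p$'s agree on $S$.** The crucial point is that for $t\in S$ one has $T_{X,p}(x)(t)=\widetilde x(tp)$ and $T_{Y,p}(x_{|S})(t)=\widetilde{x_{|S}}(tp)$, using the formula $T_p(x)(t)=\widetilde x(tp)$ established just after the definition of $T_p$. Since $tp\in\beta S\subseteq\beta Q$ for $t\in S$ and $p\in\beta S$, and since $x$ and $x_{|S}$ agree on $S$ (so their continuous extensions agree on $\beta S$), we get $T_{X,p}(x)(t)=T_{Y,p}(x_{|S})(t)$ for every $t\in S$. In other words, $T_{Y,p}(x_{|S})=\big(T_{X,p}(x)\big)_{|S}$, so the orbit in $Y$ is simply the $S$-restriction of the orbit in $X$.

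**Transferring uniform recurrence via Lemma~\ref{charUx}.** First I would apply Lemma~\ref{charUx} to the system $X$. Fix a minimal left ideal $L$ of $\beta S$. Since $p\in U_X(x)$, part~(b) gives an idempotent (or merely some) $q\in L$ with $\widetilde x(tp)=\widetilde x(tqp)$ for all $t\in Q$. In particular this holds for all $t\in S$. Because $tp,tqp\in\beta S$ for $t\in S$, and $\widetilde x$ and $\widetilde{x_{|S}}$ agree on $\beta S$, I can replace $\widetilde x$ by $\widetilde{x_{|S}}$ to obtain $\widetilde{x_{|S}}(tp)=\widetilde{x_{|S}}(tqp)$ for all $t\in S$. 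This is exactly condition~(b) of Lemma~\ref{charUx} applied to the system $Y$ (where the phase space index set is $R=S$), so by that lemma $p\in U_Y(x_{|S})$. This completes the argument.

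**Anticipated obstacle.** The only genuine subtlety is bookkeeping about which ambient compactification the elements $tp$, $tqp$ live in, and making sure that restricting the range of $t$ from $Q$ to $S$ is legitimate: the $X$-version of Lemma~\ref{charUx} quantifies over $t\in Q$, whereas the $Y$-version quantifies only over $t\in S$, so we are discarding information rather than needing to supply it, which is the easy direction. The point to state cleanly is that $\widetilde x$ restricted to $\beta S$ coincides with $\widetilde{x_{|S}}$, which follows because $x$ and $x_{|S}$ agree on the dense set $S$ and both extensions are continuous into the Hausdorff space $\{0,1\}$. Everything else is routine.
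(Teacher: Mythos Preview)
Your proof is correct and follows essentially the same route as the paper's own proof: both invoke Lemma~\ref{charUx} to characterize $U_X(x)$ and $U_Y(x_{|S})$, observe that $\widetilde{x_{|S}}$ is the restriction of $\widetilde x$ to $\beta S$, and note that the $X$-condition (quantifying over $t\in Q$) is stronger than the $Y$-condition (quantifying over $t\in S$), so the same $q\in L$ works. Your write-up is more explicit about why $tp,tqp\in\beta S$ and why the two extensions agree on $\beta S$, but the argument is identical.
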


\begin{proof} Let $y=x_{|S}$ and note that 
$\widetilde y$ is the restriction of $\widetilde x$ to $\beta S$.
Let $L$ be a minimal left ideal of $\beta S$.
By Lemma \ref{charUx}, 
$p\in U_X(x)$ if and only if there exists  $q\in L$, such 
that $\widetilde x(tp)=\widetilde x(tqp)$ for all $t\in Q$. And $p\in U_Y(x_{|S})$ 
if and only if there exists $q\in L$ such that $\widetilde y(tp)=\widetilde y(tqp)$ for
all $t\in S$. \end{proof}

\begin{theorem}\label{uxisuy} The following statements are equivalent.
\begin{itemize}
\item[(a)] For all $x\in X$, $U_X(x)=U_Y(x_{|S})$.
\item[(b)] There do not exist $p\in\beta S$ and $x\in X$ such
that $T_{X,p}(x)$ is the characteristic function of $\{e\}$ in $X$.
\item[(c)] For every $p\in\beta S$, $p\in\beta Sp$.
\end{itemize}
\end{theorem}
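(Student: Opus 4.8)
The plan is to prove the cycle (a)$\Rightarrow$(b)$\Rightarrow$(c)$\Rightarrow$(a). Three facts about the adjoined identity organize everything. Since $e$ is isolated in $Q$, we have $\beta Q=\beta S\cup\{e\}$ with $\beta S$ clopen; for $x\in X$ and $p\in\beta S$ one has $T_{X,p}(x)(t)=\widetilde x(tp)$, and the restriction of $\widetilde x$ to $\beta S$ equals $\widetilde{x_{|S}}$; and $U_X(x)\subseteq U_Y(x_{|S})$ always, by Lemma \ref{uxinuy}. Together these say that an element of $X$ is an element of $Y$ plus one extra bit, its value at $e$; that for $t\in S$ the coordinate $T_{X,p}(x)(t)$ depends only on $x_{|S}$; and that the sole coordinate which can separate the $X$- and $Y$-systems is $t=e$, where $T_{X,p}(x)(e)=\widetilde x(p)$.

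For (a)$\Rightarrow$(b) I argue contrapositively. If $T_{X,p}(x)=\cchi_{\{e\}}$ for some $p,x$, then $\big(T_{X,p}(x)\big)_{|S}$ is the constant function $0$, a fixed point of the $Y$-system and hence uniformly recurrent, so $p\in U_Y(x_{|S})$; but $\cchi_{\{e\}}$ is not uniformly recurrent in $X$, since $ts\in S$ for all $t\in Q$ and $s\in S$, whence $T_{X,s}(\cchi_{\{e\}})(e)=\cchi_{\{e\}}(s)=0$ and the return set to the neighborhood $\{h\in X:h(e)=1\}$ is empty. Thus $p\notin U_X(x)$ and (a) fails. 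For (b)$\Rightarrow$(c), again contrapositively, suppose some $p\in\beta S$ has $p\notin\beta Sp$. Since $\beta Sp=\rho_p[\beta S]=\overline{Sp}$ is compact, hence closed, I can separate $p$ from it by a clopen $\overline A$ with $A\subseteq S$, $p\in\overline A$ and $\overline A\cap\beta Sp=\emp$. Taking $x=\cchi_A\in X$ gives $T_{X,p}(x)(e)=\widetilde x(p)=1$ while $T_{X,p}(x)(t)=\widetilde x(tp)=0$ for every $t\in S$, because $tp\in\overline{Sp}$; that is, $T_{X,p}(x)=\cchi_{\{e\}}$, so (b) fails.

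The substance is (c)$\Rightarrow$(a), and the coordinate $t=e$ is exactly the obstacle. By Lemma \ref{uxinuy} it suffices to prove $U_Y(x_{|S})\subseteq U_X(x)$, so fix $p\in U_Y(x_{|S})$ and set $g=T_{X,p}(x)$, so that $g_{|S}=T_{Y,p}(x_{|S})$ is uniformly recurrent in $Y$. By Lemma \ref{uniformrecurrence}(e) applied in the $Y$-system, pick $q_0\in K(\beta S)$ with $T_{Y,q_0}(g_{|S})=g_{|S}$, and by (c) pick $u\in\beta S$ with $up=p$; the intention is that $u$ absorbs the discrepancy at $e$ without disturbing the coordinates in $S$. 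Put $q=uq_0\in K(\beta S)$ and aim to show $T_{X,q}(g)=g$, which by Lemma \ref{uniformrecurrence}(e) makes $g$ uniformly recurrent in $X$, i.e.\ $p\in U_X(x)$. For $t\in S$, the relation $up=p$ forces $T_{Y,u}(g_{|S})=g_{|S}$, so $T_{Y,q}(g_{|S})=T_{Y,u}T_{Y,q_0}(g_{|S})=g_{|S}$ and hence $T_{X,q}(g)(t)=g(t)$. For $t=e$ I observe that $T_{X,q}(g)(e)=\widetilde g(q)=\widetilde x(uq_0p)$, and that the two continuous maps $w\mapsto\widetilde x(wq_0p)$ and $w\mapsto\widetilde x(wp)$ on $\beta S$ agree on the dense set $S$ (both equal $g(s)$ at $s$, using $T_{Y,q_0}(g_{|S})=g_{|S}$), hence agree at $u$; therefore $\widetilde x(uq_0p)=\widetilde x(up)=\widetilde x(p)=g(e)$. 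The step I expect to demand the most care is precisely this identification of the $e$-coordinate: it is where $p\in\beta Sp$ is used essentially, compensating for the fact that a point of $K(\beta S)$ fixing $g_{|S}$ need not fix the extra coordinate.
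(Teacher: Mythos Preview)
Your proof is correct. The arguments for (a)$\Rightarrow$(b) and (b)$\Rightarrow$(c) are essentially identical to the paper's.

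For (c)$\Rightarrow$(a), however, you take a genuinely different route. The paper invokes the finitary characterization in Lemma~\ref{charUxp}: for a given minimal left ideal $L$ and finite $F\subseteq Q$, it first picks $v\in S$ with $\bigcap_{t\in F}B_t\in vp$ (using $p\in\overline{Sp}$), then applies Lemma~\ref{charUxp} in the $Y$-system to $Fv\in\pf(S)$ to obtain $q\in L$, and sets $q'=vq$. The shift by $v$ pushes the problematic coordinate $t=e$ into $S$, where $Y$-information suffices; the resulting $q'$ depends on $F$. By contrast, you construct a single $q=uq_0\in K(\beta S)$ directly fixing $g=T_{X,p}(x)$ in the $X$-system: $q_0$ fixes $g_{|S}$, the factor $u$ with $up=p$ handles nothing at the $S$-coordinates but is exactly what is needed at $e$, and the continuity/density argument comparing $w\mapsto\widetilde x(wq_0p)$ with $w\mapsto\widetilde x(wp)$ closes the $e$-coordinate cleanly. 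Your approach avoids the finitary lemma entirely and yields a global fixing element in one stroke; the paper's approach is more hands-on and makes the role of the shift into $S$ explicit at the level of finite configurations. Both ultimately exploit $p\in\beta Sp$ in the same essential way, but your packaging via continuity is arguably tidier.
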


\begin{proof} Assume that (a) holds and suppose we have 
$p\in\beta S$ and $x\in X$ such
that $T_{X,p}(x)$ is the characteristic function of $\{e\}$ in $X$. Then $T_{Y,p}(x_{|S})$ is constantly 
$0$ so $p\in U_Y(x_{|S})$.  But
$V=\{u\in X:w(e)=1\}$ 
is a neighborhood of $w=T_{X,p}(x)$ in $X$, while
$\{s\in S:T_{X,s}(w)\in V\}=\emp$, so
$p\notin U_X(x)$.

To see that (b) implies (c), assume that (b) holds and suppose that
we have some $p\in\beta S$ such that $p\notin\beta Sp$.  Since
$\beta Sp=\rho_p[\beta S]$, $\beta Sp$ is closed. Pick
$A\in p$ such that $\overline A\cap \beta Sp=\emp$. Let $x$ be the
characteristic function of $A$ in $X$.
First let $s\in S$. 
Then $sp\notin \overline A$ so $s^{-1}(S\setminus A)\in p$ so to
see that $T_{X,p}(s)=0$, it
suffices to observe that $s^{-1}(S\setminus A)\subseteq \{t\in S:T_{X,t}(x)(s)=0\}$.
Since $A\in p$ and for $t\in A$, $T_{X,t}(x)(e)=x(t)=1$, we have that
$T_{X,p}(x)(e)=1$. 

By Lemma \ref{uxinuy},  we have $U_X(x)\subseteq U_Y(x_{|S})$ for all $x\in X$, so
to show that (c) implies (a), it suffices to let $x\in X$, let $p\in U_Y(x_{|S})$, assume
that $p\in \beta Sp$,  and show
that $p\in U_X(x)$.  By Lemma \ref{charUxp}, it suffices to let $L$ be a
minimal left ideal of $\beta S$ and let $F\in\pf(Q)$ and show that there
is some $q\in L$ such that $\widetilde x(tp)=\widetilde x(tqp)$ for every
$t\in F$. For $t\in F$, let $B_t=\{s\in S:
x(ts)=\widetilde x(tp)\}$.  Then $\bigcap_{t\in F}B_t\in p$ 
and $p\in\beta Sp=\cl(Sp)$ so pick $v\in S$ such that
$\bigcap_{t\in F}B_t\in vp$. Let $y=x_{|S}$. Now $Fv\in\pf(S)$ so pick by Lemma \ref{charUxp}
$q\in L$ such that for all $t\in F$,
$\widetilde y(tvp)=\widetilde y(tvqp)$.
Let $q'=vq$ and note that $q'\in L$.  Let $t\in F$ be given.
Then $B_t\in vp$ so $\widetilde x(tvp)=\widetilde x(tp)$ and thus
$\widetilde x(tp)=\widetilde y(tvp)=\widetilde y(tvqp)=
\widetilde x(tq'p)$. \end{proof}

\begin{corollary}\label{corKisU} If for all $p\in\beta S$, $p\in\beta Sp$,
then $K(\beta S)=\bigcap_{x\in Y}U_Y(x)$ and
$\cl K(\beta S)=\bigcap_{x\in \Omega_Y}U_Y(x)$.
\end{corollary}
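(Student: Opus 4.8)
The plan is to transfer the two identities, already established for the system with phase space $X$, across the restriction map $x\mapsto x_{|S}:X\to Y$, with Theorem \ref{uxisuy} serving as the bridge. The hypothesis that $p\in\beta Sp$ for every $p\in\beta S$ is precisely condition (c) of Theorem \ref{uxisuy}, so that theorem delivers at once the equality $U_X(x)=U_Y(x_{|S})$ for every $x\in X$; this is the sole use of the hypothesis.

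First I would record the two facts about the $X$-system, obtained by specializing the previous section to $R=Q$. Since $Q=S\cup\{e\}$ carries the identity $e$, and in particular a left identity, Theorem \ref{bigcapUx}(3) yields $K(\beta S)=\bigcap_{x\in X}U_X(x)$. Moreover $S$ is a left ideal of $Q$ (as $qs\in S$ for every $q\in Q$ and $s\in S$), and $e$ satisfies $es=s$ for all $s\in S$, so hypothesis (a) of Corollary \ref{clKbS} holds and Corollary \ref{bigcapUandclK} gives $\cl K(\beta S)=\bigcap_{x\in\Omega_X}U_X(x)$.

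Next I would examine how the restriction map interacts with the $\Omega$-sets. For $x\in X$ one has $(x_{|S})^{-1}[\{1\}]=x^{-1}[\{1\}]\cap S$, and since both $\Omega_X$ and $\Omega_Y$ are defined by the requirement that this very subset of $S$ fail to be piecewise syndetic in $S$, it follows that $x\in\Omega_X$ if and only if $x_{|S}\in\Omega_Y$. The map $x\mapsto x_{|S}$ is onto $Y$, since any $y\in\xtotwo{S}$ extends to $Q$ by an arbitrary choice of value at $e$; the same extension of a $y\in\Omega_Y$ lands in $\Omega_X$. Hence $\{x_{|S}:x\in X\}=Y$ and $\{x_{|S}:x\in\Omega_X\}=\Omega_Y$.

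Finally I would assemble the two chains. Using $U_X(x)=U_Y(x_{|S})$ together with the surjectivity just recorded,
$$K(\beta S)=\bigcap_{x\in X}U_X(x)=\bigcap_{x\in X}U_Y(x_{|S})=\bigcap_{y\in Y}U_Y(y),$$
and likewise
$$\cl K(\beta S)=\bigcap_{x\in\Omega_X}U_X(x)=\bigcap_{x\in\Omega_X}U_Y(x_{|S})=\bigcap_{y\in\Omega_Y}U_Y(y).$$
The one step needing genuine care is the second chain: I must verify that membership in $\Omega$ depends only on the restriction to $S$ and that a lift of an element of $\Omega_Y$ remains in $\Omega_X$, which is what upgrades $\{x_{|S}:x\in\Omega_X\}\subseteq\Omega_Y$ to equality. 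Once Theorem \ref{uxisuy} is invoked, the remainder is routine bookkeeping.
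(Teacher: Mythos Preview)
Your proof is correct and follows essentially the same approach as the paper: the paper's proof cites precisely Theorem \ref{bigcapUx}, Theorem \ref{uxisuy}, and Corollary \ref{bigcapUandclK}, and your argument simply unpacks how these combine, making explicit the transfer along the restriction map $x\mapsto x_{|S}$ and the verification that this map carries $X$ onto $Y$ and $\Omega_X$ onto $\Omega_Y$.
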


\begin{proof} The first assertion is an immediate consequence of Theorems \ref{bigcapUx} and \ref{uxisuy}.
The second assertion follows from Corollary \ref{bigcapUandclK} and Theorem \ref{uxisuy}.
\end{proof}

We have already mentioned the problem of determining whether
$K(\beta S)$ or $\cl K(\beta S)$ is prime.  Recall that
an ideal $I$ in a semigroup is {\it semiprime\/} if and only if
whenever $ss\in I$, one must have $s\in I$.

\begin{corollary}\label{primeK} \begin{itemize}
\item[(1)] If $K(\beta S)\neq \bigcap_{x\in Y}U_Y(x)$, then 
$K(\beta S)$ is not semiprime.
\item[(2)] If $\cl K(\beta S)\neq \bigcap_{x\in \Omega_Y}U_Y(x)$, then 
$\cl K(\beta S)$ is not semiprime.
\end{itemize}
\end{corollary}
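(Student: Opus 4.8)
The plan is to reduce both parts to one simple observation: for any $p\in\beta S$ one has $p^2=p\cdot p\in\beta Sp$, so once one knows that $\beta Sp$ lies inside $K(\beta S)$ (resp.\ $\cl K(\beta S)$) while $p$ itself does not, the element $p$ witnesses the failure of semiprimality. Each hypothesis asserts that an inclusion of the form $K(\beta S)\subseteq\bigcap_x U_Y(x)$ (resp.\ $\cl K(\beta S)\subseteq\bigcap_x U_Y(x)$) fails to be an equality, so the first task in each part is to extract such a point $p$.

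For part (1), I would first record that the inclusion runs the right way: $K(\beta S)\subseteq\bigcap_{x\in Y}U_Y(x)$ by Theorem \ref{bigcapUx}(1) applied to the system $Y$ (that is, with $R=S$), equivalently by Corollary \ref{U(x)ideal}(3). Hence the hypothesis $K(\beta S)\neq\bigcap_{x\in Y}U_Y(x)$ upgrades to a proper inclusion, and I may fix some $p\in\bigcap_{x\in Y}U_Y(x)\setminus K(\beta S)$. Now I invoke Theorem \ref{bigcapUx}(2), again with $R=S$: it gives that $\beta Sp$ is a minimal left ideal of $\beta S$, so $\beta Sp\subseteq K(\beta S)$. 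Then $p^2\in\beta Sp\subseteq K(\beta S)$ while $p\notin K(\beta S)$, so $K(\beta S)$ is not semiprime.

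Part (2) runs in perfect parallel, using the closed versions of these facts. Here I would first note that $\cl K(\beta S)\subseteq\bigcap_{x\in\Omega_Y}U_Y(x)$: for each $x\in\Omega_Y$ the set $U_Y(x)$ is closed (Lemma \ref{UequalsN}, using that $S$ is trivially a left ideal of $R=S$) and contains $K(\beta S)$ by Corollary \ref{U(x)ideal}(3), hence contains $\cl K(\beta S)$; this is exactly the containment established inside the proof of Corollary \ref{bigcapUandclK}. So the hypothesis again yields a proper inclusion and a point $p\in\bigcap_{x\in\Omega_Y}U_Y(x)\setminus\cl K(\beta S)$. Applying Lemma \ref{bigcapclUx} with $R=S$ gives $\beta Sp\subseteq\cl K(\beta S)$, whence $p^2\in\beta Sp\subseteq\cl K(\beta S)$ but $p\notin\cl K(\beta S)$; since $\cl K(\beta S)$ is an ideal (the topological center of $\beta S$ is dense), this shows $\cl K(\beta S)$ is not semiprime.

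I do not expect any serious obstacle, since all of the structural work is already carried out in Theorem \ref{bigcapUx} and Lemma \ref{bigcapclUx}. The one point that deserves care --- and the genuinely new idea of the argument --- is recognizing that the witness to non-semiprimality can be taken to be $p$ itself, via the trivial identity $p^2\in\beta Sp$ combined with the already-proven conclusions $\beta Sp\subseteq K(\beta S)$ and $\beta Sp\subseteq\cl K(\beta S)$. The only thing to check carefully is that in each part the relevant inclusion runs in the direction that turns the given inequality into a usable strict containment, producing a point $p$ outside $K(\beta S)$ (resp.\ $\cl K(\beta S)$).
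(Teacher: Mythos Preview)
Your proposal is correct and follows essentially the same route as the paper's own proof: pick $p$ in the gap, observe $p^2\in\beta Sp$, and invoke Theorem~\ref{bigcapUx}(2) (resp.\ Lemma~\ref{bigcapclUx}) to place $\beta Sp$ inside $K(\beta S)$ (resp.\ $\cl K(\beta S)$). You supply more detail than the paper does---in particular, explicitly checking the direction of the inclusions so that a witness $p$ exists, and justifying via Lemma~\ref{UequalsN} that $U_Y(x)$ is closed for $x\in\Omega_Y$---but the argument is the same.
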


\begin{proof}
(1) If $p\in \bigcap_{x\in Y}U_Y(x)\setminus K(\beta S)$, then $pp\in\beta Sp$ and by 
Theorem \ref{bigcapUx}, $\beta Sp\subseteq K(\beta S)$.

(2) If $p\in \bigcap_{x\in \Omega_Y}U_Y(x)\setminus \cl K(\beta S)$, then $pp\in\beta Sp$
and by Lemma \ref{bigcapclUx}, $\beta Sp\subseteq \cl K(\beta S)$.
\end{proof}

By virtue of Theorem \ref{uxisuy} we are interested in knowing when there
is some $p\in\beta S$ such that $p\notin \beta Sp$.

\begin{lemma}\label{lempnotin} Let $p\in\beta S$. Then $p\notin \beta Sp$ if and only if
there exists $A\subseteq S$ such that for all $x\in S$, $x^{-1}A\in p$
and $A\notin p$.
\end{lemma}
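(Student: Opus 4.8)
The plan is to convert the topological statement $p\notin\beta Sp$ into a membership statement about the ultrafilter $p$, and then take complements. The first step is to note that $\beta Sp=\rho_p[\beta S]$ is the continuous image of the compact space $\beta S$, hence compact and therefore closed; moreover, since $S$ is dense in $\beta S$ and $\rho_p$ is continuous, $\beta Sp=\cl_{\beta S}(Sp)$. Consequently $p\notin\beta Sp$ holds if and only if some basic clopen neighborhood $\overline B$ of $p$ (equivalently, some $B\in p$) satisfies $\overline B\cap\beta Sp=\emp$; and because $\overline B$ is clopen, this is in turn equivalent to $\overline B\cap Sp=\emp$.

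Next I would unpack $\overline B\cap Sp=\emp$. This condition asserts precisely that $sp\notin\overline B$, that is $B\notin sp$, for every $s\in S$. Recalling the description of the product in $\beta S$, namely that $B\in sp$ if and only if $s^{-1}B\in p$, the condition $\overline B\cap Sp=\emp$ becomes: $s^{-1}B\notin p$ for all $s\in S$. Thus at this point $p\notin\beta Sp$ is equivalent to the existence of $B\in p$ with $s^{-1}B\notin p$ for every $s\in S$.

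Finally I would pass to the complement by setting $A=S\setminus B$. Then $B\in p$ is equivalent to $A\notin p$, and using the identity $s^{-1}A=s^{-1}(S\setminus B)=S\setminus s^{-1}B$ together with the fact that $p$ is an ultrafilter, the condition $s^{-1}B\notin p$ is equivalent to $s^{-1}A\in p$. Hence $p\notin\beta Sp$ if and only if there exists $A\subseteq S$ with $A\notin p$ and $x^{-1}A\in p$ for every $x\in S$, which is exactly the asserted equivalence.

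Since every step is a routine translation between topology and ultrafilter arithmetic, I do not anticipate a serious obstacle; the only point requiring genuine care is the complementation at the last step, specifically verifying $s^{-1}(S\setminus B)=S\setminus s^{-1}B$ and correctly tracking which of $B$ and $A$ belongs to $p$, so that the final statement comes out with $x^{-1}A\in p$ and $A\notin p$ rather than the reverse.
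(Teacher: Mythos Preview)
Your proof is correct. The paper's own proof simply cites \cite[Theorem 6.18]{HS}, which states that $p\in\beta Sp$ if and only if $C(p)\subseteq p$ where $C(p)=\{A\subseteq S:(\forall x\in S)(x^{-1}A\in p)\}$, and then reads off the contrapositive. Your argument is a self-contained unpacking of exactly this fact: you establish directly that $p\notin\beta Sp$ is equivalent to the existence of $B\in p$ with $s^{-1}B\notin p$ for all $s$, and then complement to obtain the stated form. The only difference is cosmetic---the paper quotes the characterization in terms of $A$ with $x^{-1}A\in p$, while you first derive it in terms of $B=S\setminus A$ and then complement---but the underlying topology/ultrafilter translation is identical.
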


\begin{proof} Let $C(p)=\{A\subseteq S:(\forall x\in S)(x^{-1}A\in p)\}$.
By \cite[Theorem 6.18]{HS}, $p\in \beta Sp$ if and only if $C(p)\subseteq p$.
\end{proof}

\begin{theorem}\label{thmpnotin} Assume that 
$|S|=\kappa\geq\omega$.  There exists $p\in\beta S$ such that
$p\notin \beta Sp$ if and only if there exists 
$\langle y_F\rangle_{F\in\pf(S)}$ in $S$ such that\hfill\break
$\{y_F:F\in\pf(S)\}\cap\bigcup\{Fy_F:F\in\pf(S)\}=\emp$.
\end{theorem}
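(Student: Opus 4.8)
The plan is to route everything through Lemma~\ref{lempnotin}, which converts $p\notin\beta Sp$ into the combinatorial statement that there is a set $A\subseteq S$ with $x^{-1}A\in p$ for every $x\in S$ but $A\notin p$. The guess I would make for the dictionary between such an ultrafilter and the family $\langle y_F\rangle_{F\in\pf(S)}$ is
$$A=\bigcup\{Fy_F:F\in\pf(S)\}\qquad\hbox{and}\qquad B=\{y_F:F\in\pf(S)\}\,,$$
the point being that the required disjointness $B\cap A=\emp$ is exactly the condition that lets $A\notin p$ coexist with $x^{-1}A\in p$ once $B\in p$.

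For the forward direction I would assume $p\notin\beta Sp$ and apply Lemma~\ref{lempnotin} to fix $A\subseteq S$ with $x^{-1}A\in p$ for all $x$ and $A\notin p$. Given $F\in\pf(S)$, the set $\big(\bigcap_{x\in F}x^{-1}A\big)\cap(S\setminus A)$ is a finite intersection of members of $p$ together with $S\setminus A\in p$, hence lies in $p$ and in particular is nonempty; I would choose $y_F$ in it. Then $y_F\notin A$ while $xy_F\in A$ for every $x\in F$. If some $y_G$ were equal to $sy_F$ with $s\in F$, then $sy_F\in A$ would force $y_G\in A$, contradicting $y_G\notin A$; this gives $\{y_F:F\in\pf(S)\}\cap\bigcup\{Fy_F:F\in\pf(S)\}=\emp$.

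For the converse I would take a family as in the statement and define $A$ and $B$ as above. The key observation is that for any $x_1,\ldots,x_n\in S$ and any $F\supseteq\{x_1,\ldots,x_n\}$ we have $y_F\in B$ and, since each $x_i\in F$, $x_iy_F\in Fy_F\subseteq A$; thus $y_F\in B\cap\bigcap_{i=1}^n x_i^{-1}A$ and the family $\{B\}\cup\{x^{-1}A:x\in S\}$ has the finite intersection property. I would extend it to an ultrafilter $p$. Since $B\in p$ and $B\subseteq S\setminus A$, we get $A\notin p$, while $x^{-1}A\in p$ for every $x$, so Lemma~\ref{lempnotin} yields $p\notin\beta Sp$. (A cluster point of the net $\langle y_F\rangle_{F\in\pf(S)}$, with $\pf(S)$ directed by inclusion, would serve equally well, since such a point lies in $\overline B$ and automatically satisfies the $x^{-1}A$ conditions.)

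I do not expect a genuine obstacle once the dictionary is fixed; the only delicate point is the bookkeeping in the forward direction, where the chosen $y_F$ must simultaneously avoid $A$ and carry all of $F$ into $A$, and it is precisely this double requirement that produces the disjointness from $\bigcup_F Fy_F$. The hypothesis $|S|=\kappa\ge\omega$ appears to enter only as the standing assumption that $S$ is infinite, and I would note that the equivalence itself should hold without any cardinality restriction.
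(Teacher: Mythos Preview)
Your proof is correct and follows essentially the same route as the paper: both directions go through Lemma~\ref{lempnotin}, with $y_F$ chosen in $(S\setminus A)\cap\bigcap_{x\in F}x^{-1}A$ for necessity and $A=\bigcup_F Fy_F$ together with the finite intersection property for sufficiency. Your write-up is a bit more explicit (you introduce $B$ and spell out the disjointness verification), but the argument is the same.
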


\begin{proof} Necessity. Pick $p\in\beta S$ such that
$p\notin \beta Sp$. By Lemma \ref{lempnotin}, pick $A\subseteq S$ such that
for all $x\in S$, $x^{-1}A\in p$ and $A\notin p$.  For
$F\in\pf(S)$ pick $y_F\in (S\setminus A)\cap\bigcap_{x\in F}x^{-1}A$.

Sufficiency. Let $A=\bigcup\{Fy_F:F\in\pf(S)\}$.  Then
$\{S\setminus A\}\cup\{x^{-1}A:x\in S\}$ has the finite intersection
property so pick $p\in\beta S$ such that $\{S\setminus A\}\cup\{x^{-1}A:x\in S\}\subseteq
p$.  By Lemma \ref{lempnotin}, $p\notin \beta Sp$. \end{proof}

One of the assumptions in the following corollary is that $S^*=\beta S\setminus S$ is a right ideal
of $\beta S$. A (not very simple) characterization of when $S^*$ is a right ideal
of $\beta S$ is given in \cite[Theorem 4.32]{HS}. By \cite[Corollary 4.33 and Theorem 4.36]{HS}  it is
sufficient that $S$ be either right cancellative or weakly cancellative.

\begin{corollary}\label{corpnotin} Assume that  $|S|=\kappa\geq\omega$
and assume that $$|S\setminus\{t\in S:(\exists s\in S)(st=t)\}|=\kappa\,.$$  If
either $S^*$ is a right ideal of $\beta S$ or $S$ is very weakly left cancellative, 
then there exists $p$ in $\beta S$ such that $p\notin \beta Sp$.
\end{corollary}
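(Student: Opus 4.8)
The plan is to apply Lemma~\ref{lempnotin}: it suffices to produce $p$ together with a set $A\subseteq S$ such that $A\notin p$ and $x^{-1}A\in p$ for every $x\in S$; equivalently (replacing $A$ by $S\setminus A$) a set $A\in p$ with $s^{-1}A\notin p$ for every $s\in S$. Write $T=S\setminus\{t\in S:(\exists s\in S)(st=t)\}$, so that $|T|=\kappa$. I would first observe that any principal $b\in T$ already witnesses the bare statement, since $b$ is isolated and $b\notin Sb$, so no $q\in\beta S$ has $qb=b$ and hence $b\notin\beta Sb$. The real content — and what the hypotheses and the later use of this corollary demand — is a witness in $U_\kappa(S)$, and that is what I would target. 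The clean sufficient condition to isolate is the existence of $B\subseteq T$ with $|B|=\kappa$ and $B\cap SB=\emp$: for $t\in B$ and $s\in S$ we have $st\in SB$, hence $st\notin B$, so $\{B\}\cup\{\{t\in S:st\notin B\}:s\in S\}$ has the $\kappa$-uniform finite intersection property (every finite subfamily contains $B$). Any $p\in U_\kappa(S)$ extending it then satisfies $B\in p$ and $s^{-1}B\notin p$ for all $s$, whence $p\notin\beta Sp$.

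For the very weakly left cancellative case I would not use an antichain (it need not exist, e.g. for cancellative $S$) but instead run a transfinite construction in the style of Theorem~\ref{proper}. Enumerating $\pf(S)=\langle F_\alpha\rangle_{\alpha<\kappa}$, I would choose distinct points $y_{F_\alpha}\in T$ so that the resulting family $\langle y_F\rangle_{F\in\pf(S)}$ satisfies $\{y_F:F\}\cap\bigcup\{Fy_F:F\}=\emp$; then Theorem~\ref{thmpnotin}, applied with $A=\bigcup_FFy_F$, produces $p$. At stage $\alpha$ the point $y_{F_\alpha}$ must avoid (i) the fewer-than-$\kappa$ earlier $y$'s together with the finite set $\bigcup_{\delta<\alpha}F_\delta y_{F_\delta}$, and (ii) the set $\bigcup_{x\in F_\alpha}x^{-1}\{y_{F_\delta}:\delta<\alpha\}$, a union of fewer than $\kappa$ left solution sets, which has cardinality $<\kappa$ exactly because $S$ is very weakly left cancellative; membership in $T$ disposes of the diagonal collisions $xy_{F_\alpha}=y_{F_\alpha}$. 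Since $|T|=\kappa$, a legal choice always remains, the $y_{F_\alpha}$ stay distinct, and a $\kappa$-uniform cluster point of the witnesses delivers $p\in U_\kappa(S)$.

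The step I expect to be the main obstacle is the $S^*$ right ideal case. This hypothesis governs \emph{right} solution sets — it forces $S$ to be weakly right cancellative, so each map $s\mapsto sb$ is finite-to-one — but it gives no control over the left solution sets that drive the obstruction in (ii) above, so the family construction of the previous paragraph need not succeed. Here I would instead try to produce directly the antichain $B$ of the first paragraph, using finiteness of the fibres of $s\mapsto sb$ to keep the $\kappa$ chosen points of $T$ pairwise non-left-divisible, and would revert to the family argument in the residual case where the left solution sets happen to remain small. Establishing this dichotomy cleanly — showing that ``$S^*$ a right ideal'' together with $|T|=\kappa$ always yields either a large set $B\subseteq T$ with $B\cap SB=\emp$ or the smallness of left solution sets needed for the family construction — is the delicate point, and it is precisely where the full strength of the right‑ideal hypothesis (beyond mere weak right cancellativity) must be invoked.
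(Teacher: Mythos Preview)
Your first observation is correct and already proves the corollary: if $b\in T$ and $q\in\beta S$ had $qb=b$, then since $\{b\}$ is an open neighbourhood of $b$ we would need $\{s\in S:sb=b\}\in q$, but this set is empty by choice of $b$. So $b\notin\beta Sb$. This argument uses neither of the two hypotheses; only $T\neq\emp$ is required, which follows from $|T|=\kappa$.

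Compared with the paper: for the case ``$S^*$ a right ideal'' the paper does exactly this --- it picks $t\in T$ and observes $t\notin St$ and $t\notin S^*t$ --- but it invokes the right-ideal hypothesis to get $S^*t\subseteq S^*$ for the second conclusion, whereas your argument shows that step is redundant. For the very weakly left cancellative case the paper runs the family construction via Theorem~\ref{thmpnotin}, which is precisely your second paragraph. So your first paragraph is in fact a strictly simpler proof than the paper's, and your second paragraph matches the paper's second case.

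Where you go astray is in the claim that ``the real content --- and what the hypotheses and the later use of this corollary demand --- is a witness in $U_\kappa(S)$''. The statement asks only for $p\in\beta S$, and the corollary is not invoked elsewhere in the paper. Your third paragraph is therefore wrestling with a problem that is not posed: for the corollary as stated, the $S^*$-right-ideal case is \emph{not} the hard case --- it is (like the other case) already covered by your one-line observation. There is no dichotomy to establish.

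One small slip in your second paragraph: the set $\bigcup_{\delta<\alpha}F_\delta y_{F_\delta}$ is not finite once $\alpha\geq\omega$; it has cardinality at most $|\alpha|\cdot\aleph_0<\kappa$, which is what you need.
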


\begin{proof} Assume first that $S^*$ is a right ideal of $\beta S$, and pick 
$t\in S$ such that there is no $s\in S$ with $st=t$.  Then $t\notin St$ and 
$t\notin S^* t$.

Now assume that $S$ is very weakly left cancellative.
 Enumerate $\pf(S)$ as $\langle F_\alpha\rangle_{\alpha<\kappa}$.  By
Theorem \ref{thmpnotin}, it suffices to produce $\langle t_\alpha\rangle_{\alpha<\kappa}$ in
$S$ such that\break $\{t_\alpha:\alpha<\kappa\}\cap\bigcup\{F_\alpha t_\alpha:\alpha<\kappa\}=\emp$.

Let $E=\{t\in S:(\exists s\in S)(st=t)\}$. Pick $t_0\in S\setminus E$.  Let $0<\alpha<\kappa$ and assume
we have chosen $\langle t_\delta\rangle_{\delta<\alpha}$ in $S\setminus E$ such that
if $\delta>0$, then $t_\delta\notin\bigcup_{\mu<\delta}F_{\mu} t_\mu$ and for each 
$x\in F_\delta$, $xt_\delta\notin\{t_\mu:\mu<\delta\}$.

For $x\in S$ and $\mu<\alpha$, let $H_{x,\mu}=\{t\in S:xt=t_\mu\}$.  Then
each $H_{x,\mu}$ is a left solution set so $|\bigcup\{H_{x,\mu}:x\in F_\alpha\hbox{ and }
\mu<\alpha\}|<\kappa$.  Pick 
$$\textstyle t_\alpha\in S\setminus (E\cup\bigcup\{H_{x,\mu}:x\in F_\alpha\hbox{ and }
\mu<\alpha\}\cup\bigcup_{\mu<\alpha}F_\mu t_\mu)\,.$$

Suppose we have some $\mu<\kappa$ such that
$t_\mu\in \bigcup\{F_\alpha t_\alpha:\alpha<\kappa\}$ and pick
$\alpha<\kappa$ and $x\in F_\alpha$ such that
$t_\mu=xt_\alpha$.  Then $\alpha\neq\mu$ because $t_\alpha\notin E$.
If $\alpha<\mu$, we would have $t_\mu\in F_\alpha t_\alpha$.
So we must have $\mu<\alpha$.  But then $t_\alpha\in H_{x,\mu}$, a contradiction.
\end{proof}

We conclude this section by exhibiting a sufficient condition which guarantees
$K(\beta S)=\bigcap_{x\in Y}U_Y(x).$  We shall see that this
does not require equality between $U_X(x)$ and $U_Y(x_{|S})$ for
all $x\in X$.

\begin{theorem}\label{sAimplies} Assume that for all
$p\in\bigcap_{x\in Y}U_Y(x)$ and all $A\in p$  
the assumption that  $\{t\in S:t^{-1}sA\in p\}$ is syndetic for every $s\in S$,
implies that $\{t\in S:t^{-1}A\in p\}\neq \emptyset$. Then
$K(\beta S)=\bigcap_{x\in Y}U_Y(x)$.\end{theorem}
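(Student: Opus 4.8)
The plan is to establish the inclusion $\bigcap_{x\in Y}U_Y(x)\subseteq K(\beta S)$, since the reverse inclusion is already given by Theorem \ref{bigcapUx}(1) (applied with $R=S$, so that $Z=Y$). Fix $p\in\bigcap_{x\in Y}U_Y(x)$. By Theorem \ref{bigcapUx}(2), $\beta Sp$ is a minimal left ideal of $\beta S$, hence $\beta Sp\subseteq K(\beta S)$; consequently it suffices to show $p\in\beta Sp$, for then $p\in K(\beta S)$. The first step is to reinterpret this membership: since $\rho_p$ is continuous and $\beta S=\overline S$, one has $\beta Sp=\rho_p[\beta S]=\overline{Sp}$, so $p\in\beta Sp$ if and only if every basic neighborhood $\overline A$ of $p$ meets $Sp$. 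Because $sp\in\overline A$ is equivalent to $s^{-1}A\in p$, this says precisely that for every $A\in p$ the set $\{t\in S:t^{-1}A\in p\}$ is nonempty. Thus the goal becomes: show $\{t\in S:t^{-1}A\in p\}\neq\emp$ for each $A\in p$.

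Here the hypothesis of the theorem is designed to apply directly: for a fixed $A\in p$, it reduces the nonemptiness of $\{t:t^{-1}A\in p\}$ to showing that $\{t\in S:t^{-1}sA\in p\}$ is syndetic for every $s\in S$. The plan is to recognize each such set as a level set of a $T_p$-image. Fix $s\in S$, put $B=sA$, and let $x=\chi_B\in Y$ be the characteristic function of $B$; then $\widetilde x=\chi_{\overline B}$ on $\beta S$, and for $w:=T_p(x)$ we have $w(t)=\widetilde x(tp)=1$ exactly when $tp\in\overline B$, i.e.\ when $t^{-1}sA\in p$. So $\{t:t^{-1}sA\in p\}=\{t\in S:w(t)=1\}$, and since $p\in U_Y(x)$ the point $w$ is uniformly recurrent. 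It remains to deduce syndeticity of $\{t:w(t)=1\}$ from uniform recurrence of $w$, which is the crux of the argument.

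For the syndeticity I would first anchor the argument with the observation that $w(s)=1$: indeed $s^{-1}(sA)\supseteq A\in p$, so $s^{-1}B\in p$ and hence $w(s)=1$. Then I take the subbasic neighborhood $V=\{u\in Y:u(s)=1\}$ of $w$; by uniform recurrence (Definition \ref{defurec}) the return set $\{t'\in S:T_{t'}(w)\in V\}$ is syndetic. Since $T_{t'}(w)(s)=w(st')$, this return set equals $\{t':w(st')=1\}=s^{-1}D$, where $D=\{t:w(t)=1\}$. Finally I would use the elementary fact that $s^{-1}D$ syndetic forces $D$ syndetic: if $S=\bigcup_{u\in F}u^{-1}(s^{-1}D)$ with $F$ finite, then $u^{-1}(s^{-1}D)=(su)^{-1}D$, so $S=\bigcup_{w\in sF}w^{-1}D$ with $sF$ finite. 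Hence $D=\{t:t^{-1}sA\in p\}$ is syndetic, as needed.

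The main obstacle is conceptual rather than computational: one must see that the theorem's hypothesis is exactly calibrated to certify $p\in\overline{Sp}=\beta Sp$, and that the genuine work lies in converting uniform recurrence of $w=T_p(\chi_{sA})$ into syndeticity of its one-set $\{t:w(t)=1\}$. The two points that make this conversion go through are the identity $w(s)=1$ (which places $w$ in the coordinate-$s$ cylinder and thus lets the return-time syndeticity speak about the $1$-set) and the passage from $s^{-1}D$ syndetic to $D$ syndetic; both are short, but choosing the neighborhood $V=\{u:u(s)=1\}$ and matching coordinate $s$ is where the idea resides. No further cancellation hypotheses on $S$ are invoked, so this yields $K(\beta S)=\bigcap_{x\in Y}U_Y(x)$ under the stated implication alone.
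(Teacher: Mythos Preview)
Your argument is correct. Both proofs reduce the problem to showing $p\in\beta Sp$ (using Theorem~\ref{bigcapUx}(2)), and both then need that $\{t\in S:t^{-1}sA\in p\}$ is syndetic for every $s\in S$, but the verifications diverge. The paper argues algebraically: by \cite[Theorem~4.48]{HS} a set is syndetic iff its closure meets every minimal left ideal, and since Theorem~\ref{bigcapUx}(2) gives $sp\in Lp$ for each minimal left ideal $L$, one picks $q\in L$ with $sp=qp$ and reads off $\{t:t^{-1}sA\in p\}\in q$. You instead stay inside the dynamical system: you realize $\{t:t^{-1}sA\in p\}$ as the $1$-set of the uniformly recurrent point $w=T_p(\chi_{sA})$, anchor with $w(s)=1$, and pull syndeticity of $s^{-1}D$ back to $D$ via $u^{-1}(s^{-1}D)=(su)^{-1}D$. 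Your route is more self-contained (no appeal to \cite[Theorem~4.48]{HS}) and exploits directly that $p\in U_Y(\chi_{sA})$; the paper's route is shorter once that external characterization is granted and makes the role of $\beta Sp=Lp$ more visible.
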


\begin{proof} Assume that $p\in \bigcap_{x\in Y}U_Y(x)\setminus K(\beta S)$. By Theorem \ref{bigcapUx}(2),
$\beta Sp\subseteq K(\beta S)$ so $p\notin \beta Sp$.  Pick $A\in p$ such that $\overline A\cap \beta Sp=\emptyset$. 
Thus $\{t\in S:t^{-1}A\in p\}=\emptyset$.
We claim that for all $s\in S$, $\{t\in S:t^{-1}sA\}$ is syndetic. So let $s\in S$.
By \cite[Theorem 4.48]{HS} it suffices to let $L$ be a minimal left ideal of $\beta S$
and show that there is some $q\in L$ such that $\{t\in S:t^{-1}sA\in p\}\in q$.
By Theorem \ref{bigcapUx}(2), $sp\in Lp$ so pick $q\in L$ such that $sp=qp$.
Then $sA\in qp$ so $\{t\in S:t^{-1}sA\in p\}\in q$ as required.
\end{proof}

Note that by Theorem \ref{bigcapUx}(3), 
$K(\beta \ben,+)=\bigcap_{x\in Y}U_Y(x)$ while $1\notin \beta \ben+1$ so by 
Theorem \ref{uxisuy}, it is not the case that for all $x\in X$, $U_X(x)=U_Y(x_{|S})$.
On the other hand, given $p\in K(\beta\ben,+)$ one has $p=q+p$ for some
$p\in K(\beta\ben,+)$ so automatically for any $A\in p$,
$\{t\in \ben:-t+A\in p\}\neq \emptyset$ so the hypotheses of Theorem \ref{sAimplies} are valid.

\section{Recurrence and surjectivity of $T_p$}

So far in this paper we have been considering the notion
of uniform recurrence. We now introduce a notion which is usually weaker.

\begin{definition}\label{defrecur} Let $(X,\langle T_s\rangle_{s\in S})$ be a
dynamical system. The point $x\in X$ is {\it recurrent\/} if and only if
for each neighborhood $V$ of $x$ in $X$, $\{s\in S:T_s(x)\in V\}$ is infinite.
\end{definition}

If all syndetic subsets of a semigroup $S$ are infinite, then any
uniformly recurrent point of $X$ is recurrent.  This is not always the
case.  For example, if $S$ is a left zero semigroup and $x\in S$,
then $x$ is uniformly recurrent in the dynamical system $(\beta S, \langle \lambda_s\rangle_{s\in S})$
but is not recurrent.  (We have that $\{x\}$ is a neighborhood of $x$ and
$\big\{s\in S:\lambda_s(x)\in\{x\}\big\}=\{x\}$, which is syndetic, but finite.)

The following characterization of recurrence is very similar to the
characterization of uniform recurrence in \cite[Theorem 19.23]{HS}.
Part of the results depend on the assumption that $S^*$ is a subsemigroup of $\beta S$.
There is a characterization of $S^*$ as a subsemigroup in \cite[Theorem 4.28]{HS}.
By \cite[Corollary 4.29 and Theorem 4.31]{HS} it is sufficient that
$S$ be right cancellative or weakly left cancellative.

\begin{theorem}\label{charrec} Let $(X,\langle T_s\rangle_{s\in S})$ be a
dynamical system.  Statements (a) and (b) are equivalent and imply
statements (c) and (d), which are equivalent. If $S^*$ is a subsemigroup
of $\beta S$, then all four statements are equivalent.
\begin{itemize}
\item[(a)] There exists an idempotent $p\in S^*$ such that
$T_p(x)=x$.
\item[(b)] There exist $y\in X$ and an idempotent $p\in S^*$ such that
$T_p(y)=x$.
\item[(c)] There exists $p\in S^*$ such that
$T_p(x)=x$.
\item[(d)] $x$ is recurrent.
\end{itemize}\end{theorem}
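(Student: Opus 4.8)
The plan is to establish all the implications in a few easy steps together with one step that invokes the Ellis--Numakura idempotent theorem (fact (1) in the list of algebraic properties). First I would dispose of the equivalence of (a) and (b): that (a) yields (b) is immediate on taking $y=x$, and for the converse, if $T_p(y)=x$ with $p$ an idempotent in $S^*$, then $T_p(x)=T_p\big(T_p(y)\big)=T_{pp}(y)=T_p(y)=x$ since $pp=p$. That (a) yields (c) is trivial, since the same idempotent $p$ witnesses (c).

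Next I would treat the equivalence of (c) and (d). For the forward direction, suppose $T_p(x)=x$ for some $p\in S^*$. Given a neighbourhood $V$ of $x$, the defining property of the $p$-limit gives $\{s\in S:T_s(x)\in V\}\in p$; since $p$ is nonprincipal, every member of $p$ is infinite, and so $x$ is recurrent. For the reverse direction I would build a nonprincipal ultrafilter by hand: for each neighbourhood $V$ of $x$ put $A_V=\{s\in S:T_s(x)\in V\}$, which is infinite because $x$ is recurrent. As $A_W\subseteq A_V$ whenever $W\subseteq V$, the family $\{A_V:V\text{ is a neighbourhood of }x\}$ together with the cofinite subsets of $S$ has the finite intersection property: a finite intersection of the $A_V$ contains some infinite $A_W$, which survives the deletion of any finite set. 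Any ultrafilter $p$ extending this family contains the cofinite filter, hence is nonprincipal and lies in $S^*$, and it has $A_V\in p$ for every $V$, which says precisely that $T_p(x)=x$.

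The substantive step, which I expect to be the main obstacle, is deducing (a) from (c) under the hypothesis that $S^*$ is a subsemigroup of $\beta S$. Here I would form
$$E=\{p\in S^*:T_p(x)=x\}\,,$$
and argue that $E$ is a compact right topological semigroup. It is nonempty by (c). It is closed, hence compact: $S^*$ is closed in the compact space $\beta S$ because the points of $S$ are isolated, and $\{p\in\beta S:T_p(x)=x\}$ is the preimage of the closed set $\{x\}$ under the continuous map $p\mapsto T_p(x)$. It is closed under multiplication: if $p,q\in E$ then $T_{pq}(x)=T_p\big(T_q(x)\big)=T_p(x)=x$, and since $S^*$ is assumed to be a subsemigroup we also have $pq\in S^*$, whence $pq\in E$. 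This last point is the only place the hypothesis on $S^*$ enters. Finally $E$ is right topological, inheriting the continuity of each $\rho_p$ from $\beta S$. By fact (1), the compact right topological semigroup $E$ contains an idempotent $q$; this $q$ satisfies $q\in S^*$, $qq=q$, and $T_q(x)=x$, which is exactly statement (a). Together with the equivalences above, this closes the cycle and yields the equivalence of all four statements when $S^*$ is a subsemigroup.
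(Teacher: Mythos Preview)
Your proof is correct and follows essentially the same approach as the paper: the same implications are established in the same order, and the key step (c)$\Rightarrow$(a) is handled identically by forming $E=\{p\in S^*:T_p(x)=x\}$ and applying the idempotent theorem. The one minor difference is that you verify closedness of $E$ by invoking the continuity of $p\mapsto T_p(x)$ (established in the introduction), whereas the paper checks it by hand via a case split on $q\in S$ versus $q\in S^*$; your version is slightly more streamlined but not a different idea.
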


\begin{proof} Trivially (a) implies (b) and (a) implies (c). To see that
(b) implies (a), pick $y\in X$ and an idempotent $p\in S^*$ such that
$T_p(y)=x$. Then $x=T_p(y)=T_{pp}(y)=T_p\big(T_p(y)\big)=T_p(x)$.

To see that (c) implies (d), pick $p\in S^*$ such that
$T_p(x)=x$. Let $V$ be a neighborhood of $x$. Then
$\{s\in S:T_s(x)\in V\}\in p$ so $\{s\in S:T_s(x)\in V\}$ is infinite.

To see that (d) implies (c), assume that $x$ is recurrent and for each neighborhood $V$ of $x$,
let $D_V=\{s\in S:T_s(x)\in V\}$.  Then any finite subfamily of
$\{D_V:V$ is a neighborhood of $x\}$ has infinite intersection so
pick by \cite[Corollary 3.14]{HS} some $p\in S^*$ such that
$\{D_V:V$ is a neighborhood of $x\}\subseteq p$. Then $T_p(x)=x$.

Now assume that $S^*$ is a semigroup. To see that (c) implies (a), pick $p\in S^*$ such that
$T_p(x)=x$ and let $E=\{q\in S^*:T_q(x)=x\}$.
Since $S^*$ is a subsemigroup of $\beta S$, we have that
$E$ is a subsemigroup of $\beta S$.  We claim that
$E$ is closed. To see this, let $q\in \beta S\setminus E$.
If $q\in S$, then $\{q\}$ is a neighborhood of $q$ missing $E$, 
so assume that $q\in S^*$. Pick an open neighborhood $U$ of $T_q(x)$
such that $x\notin \cl U$ and let $A=\{s\in S:T_s(x)\in U\}$.
Then $\overline A$ is a neighborhood of $q$ which misses $E$.
Since $E$ is a compact right topological semigroup, there 
is an idempotent in $E$.
\end{proof}

Recall that in any dynamical system, $(X,\langle T_s\rangle_{s\in S})$,
$K(\beta S)\subseteq\bigcap_{x\in X}U_X(x)$ and we have obtained
sufficient conditions for equality.

\begin{theorem}\label{qpinK} Let $(X,\langle T_s\rangle_{s\in S})$ be a dynamical system,
let $p\in \beta S$, and assume that $T_p:X\to X$ is surjective and
$K(\beta S)=\bigcap_{x\in X}U_X(x)$. Then for any $q\in\beta S$,
$qp\in K(\beta S)$ if and only if $q\in K(\beta S)$.
\end{theorem}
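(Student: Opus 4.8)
The plan is to handle the two implications separately, with the forward direction being immediate and the reverse direction exploiting the surjectivity hypothesis together with the identity $T_q\circ T_p=T_{qp}$ noted after Definition~\ref{defTp}.

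For the direction $q\in K(\beta S)\Rightarrow qp\in K(\beta S)$, I would simply observe that $K(\beta S)$ is a two sided ideal of $\beta S$ and hence a right ideal, so $qp=q\cdot p\in K(\beta S)$ follows at once from $q\in K(\beta S)$. No further hypotheses are needed here; in particular this direction uses neither surjectivity of $T_p$ nor the assumed equality $K(\beta S)=\bigcap_{x\in X}U_X(x)$.

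For the converse, assume $qp\in K(\beta S)$. Since $K(\beta S)=\bigcap_{x\in X}U_X(x)$, it suffices to show $q\in U_X(x)$ for every $x\in X$, that is, that $T_q(x)$ is uniformly recurrent for every $x\in X$. Fixing an arbitrary $x\in X$, I would use the surjectivity of $T_p$ to choose $z\in X$ with $T_p(z)=x$. Then $T_q(x)=T_q\big(T_p(z)\big)=T_{qp}(z)$. Because $qp\in K(\beta S)$ and $K(\beta S)\subseteq U_X(z)$ by Corollary~\ref{U(x)ideal}(3), we get $qp\in U_X(z)$, so $T_{qp}(z)$ is uniformly recurrent; hence $T_q(x)$ is uniformly recurrent. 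As $x\in X$ was arbitrary, $q\in\bigcap_{x\in X}U_X(x)=K(\beta S)$, as desired.

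The argument is short and the only real leverage comes from surjectivity, which is exactly what lets every target point $x$ be realized as $T_p(z)$ so that the composition law converts membership of $qp$ in $K(\beta S)$ into uniform recurrence of $T_q(x)$. Consequently there is no serious obstacle; the point to be careful about is that the equality $K(\beta S)=\bigcap_{x\in X}U_X(x)$ is genuinely needed to reverse the containment $K(\beta S)\subseteq\bigcap_{x\in X}U_X(x)$ from Corollary~\ref{U(x)ideal}(3), since without it one would only recover $q\in\bigcap_{x\in X}U_X(x)$ rather than $q\in K(\beta S)$.
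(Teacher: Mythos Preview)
Your proof is correct and follows essentially the same approach as the paper: the trivial direction uses that $K(\beta S)$ is an ideal, and the substantive direction uses surjectivity of $T_p$ to write $x=T_p(z)$, so that $T_q(x)=T_{qp}(z)$ is uniformly recurrent because $qp\in K(\beta S)\subseteq U_X(z)$. Your version is in fact slightly more explicit in citing Corollary~\ref{U(x)ideal}(3) and in noting which hypotheses are actually used in each direction.
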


\begin{proof} Let $q\in\beta S$. The sufficiency is trivial, so
assume that $qp\in K(\beta S)$. It suffices to show 
that $q\in \bigcap_{x\in X}U(x)$, so let $x\in X$ be given.
Pick $y\in X$ such that $T_p(y)=x$. 
Then $T_q(x)=T_q\big(T_p(y)\big)=T_{qp}(y)$. Since
$qp\in U(y)$ we have $T_{qp}(y)$ is uniformly recurrent, and
so $T_q(x)\in U(x)$ as required.\end{proof}

\begin{definition}\label{defNS} Let $(X,\langle T_s\rangle_{s\in S})$ be a dynamical system.
Then $NS=NS_X=\{p\in\beta S:T_p$ is not surjective$\}$.
\end{definition} 

We have seen that $U(x)$ is always a left ideal of $\beta S$.

\begin{lemma}\label{NSright} Let $(X,\langle T_s\rangle_{s\in S})$ be a dynamical system.
If $NS\neq\emp$, then $NS$ is a right ideal of $\beta S$.
\end{lemma}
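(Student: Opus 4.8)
The plan is to exploit the single structural fact established just after Definition \ref{defTp}, namely that $T_p\circ T_q=T_{pq}$ for all $p,q\in\beta S$. Since $NS$ is assumed nonempty, the only thing left to verify for it to be a right ideal is the inclusion $NS\cdot\beta S\subseteq NS$. So I would fix $p\in NS$ and $q\in\beta S$ and aim to show $pq\in NS$, i.e. that $T_{pq}$ fails to be surjective.

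The key step is a range-containment observation. Writing the composition out, $T_{pq}=T_p\circ T_q$, so for every $x\in X$ we have $T_{pq}(x)=T_p\big(T_q(x)\big)\in T_p[X]$. Hence the range of $T_{pq}$ satisfies $T_{pq}[X]=T_p\big[T_q[X]\big]\subseteq T_p[X]$. Because $p\in NS$, the map $T_p$ is not surjective, so $T_p[X]$ is a proper subset of $X$; therefore $T_{pq}[X]\subseteq T_p[X]\subsetneq X$, which means $T_{pq}$ is not surjective and thus $pq\in NS$. This gives $NS\cdot\beta S\subseteq NS$, and together with the standing hypothesis $NS\neq\emp$ it yields that $NS$ is a right ideal of $\beta S$.

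I do not expect any genuine obstacle here: the entire argument rests on the identity $T_p\circ T_q=T_{pq}$, which is already in hand, plus the elementary fact that the image of a composite is contained in the image of the outer map. No continuity of $T_p$ or $T_q$ is needed (which is fortunate, since such continuity typically fails for $p\in\beta S\setminus S$), and no appeal to the algebraic structure of $K(\beta S)$ is required. The role of the nonemptiness assumption is purely to meet the definition of a right ideal as a nonempty set; the closure property $NS\cdot\beta S\subseteq NS$ holds regardless.
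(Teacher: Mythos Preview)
Your proof is correct and is essentially the same as the paper's: both rest on the observation that since $T_{pq}=T_p\circ T_q$, the range of $T_{pq}$ is contained in the range of $T_p$, so $p\in NS$ forces $pq\in NS$. The paper simply states this range containment in a single sentence, while you have spelled out the details.
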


\begin{proof} Given $p\in NS$ and $q\in \beta S$, the 
range of $T_{pq}$ is contained in the range of $T_p$. \end{proof}

\begin{lemma}\label{idemNS}  Let $(X,\langle T_s\rangle_{s\in S})$ be a dynamical system.
If there is some $x\in X$ such that $x$ is not recurrent, then
$\{p\in S^*:pp=p\}\subseteq NS$.
\end{lemma}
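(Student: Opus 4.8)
The plan is to recognize that the statement is essentially the contrapositive of one implication already contained in Theorem \ref{charrec}. Unwinding the definition, $p\in NS$ means exactly that $T_p$ fails to be surjective, i.e. some point of $X$ lies outside the range $T_p[X]$. My claim is that the given non-recurrent point $x$ is itself a witness to non-surjectivity for \emph{every} idempotent $p\in S^*$ simultaneously.

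First I would invoke Theorem \ref{charrec}. That theorem asserts (in its generally valid portion, with no assumption that $S^*$ be a subsemigroup) that statement (b) implies statement (d). Here (d) is precisely ``$x$ is recurrent'' and (b) is ``there exist $y\in X$ and an idempotent $p\in S^*$ such that $T_p(y)=x$''. Since we are assuming $x$ is \emph{not} recurrent, statement (d) fails, and so by the contrapositive of the implication (b)$\Rightarrow$(d), statement (b) must fail as well. Explicitly, there do not exist any $y\in X$ and any idempotent $p\in S^*$ with $T_p(y)=x$.

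Second, I would fix an arbitrary idempotent $p\in S^*$, i.e. an arbitrary member of $\{p\in S^*:pp=p\}$, and read off the consequence of the failure of (b): there is no $y\in X$ with $T_p(y)=x$. Hence $x\notin T_p[X]$, so $T_p$ is not surjective, which is exactly the assertion that $p\in NS$. As $p$ was an arbitrary idempotent of $S^*$, this gives $\{p\in S^*:pp=p\}\subseteq NS$, completing the argument.

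There is no genuine obstacle here; the only point requiring a little care is to use \emph{only} the implication (b)$\Rightarrow$(d), which holds unconditionally in Theorem \ref{charrec}. One must resist appealing to the reverse implications (c)$\Rightarrow$(a) or (d)$\Rightarrow$(b), since those rely on the extra hypothesis that $S^*$ is a subsemigroup of $\beta S$, a hypothesis we neither have nor need. The content of the lemma is thus that a single non-recurrent point obstructs surjectivity of every idempotent ultrafilter map at once.
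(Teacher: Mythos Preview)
Your proposal is correct and matches the paper's own proof essentially line for line: the paper fixes a non-recurrent $x$, takes an arbitrary idempotent $p\in S^*$, and argues by contradiction that if $T_p(y)=x$ for some $y$ then Theorem~\ref{charrec} would force $x$ to be recurrent. Your framing via the contrapositive of (b)$\Rightarrow$(d) is the same argument, and your observation that only the unconditional direction of Theorem~\ref{charrec} is needed is exactly right.
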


\begin{proof} Pick $x\in X$ such that $x$ is not recurrent and
let $p$ be an idempotent in $S^*$.  We claim that
$x$ is not in the range of $T_p$, so suppose instead we have
$y\in X$ such that $T_p(y)=x$. Then by 
Theorem \ref{charrec}, $x$ is recurrent.\end{proof}

We shall establish a strong connection between the surjectivity of $T_p$ 
and $p$ being right cancelable in $\beta S$.  The 
purely algebraic result in Theorem \ref{rpcancel} will be useful.

\begin{lemma}\label{satb} Let $S$ be a countable right cancellative and 
weakly left cancellative semigroup and let $B$ be an infinite subset of $S$.
There is an infinite subset $D$ of $B$ with the property that whenever $s$ and $t$ are distinct 
members of $S$, there is a finite  subset $F$ of $D$ such that $sa\neq tb$ whenever $a,b\in D\setminus F$.
\end{lemma}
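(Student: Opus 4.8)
The plan is to reduce the statement to a finiteness condition on ``collisions'' and then build $D$ one point at a time, defusing each pair $(s,t)$ by a fixed deadline. First I would observe that, for fixed distinct $s,t\in S$, the requested $F$ exists if and only if the set of collisions $\{(a,b)\in D\times D:sa=tb\}$ is finite. Indeed, if the collision set is finite, then taking $F$ to be the finite set of all coordinates appearing in it makes $sa\neq tb$ for all $a,b\in D\setminus F$. Conversely, if such an $F$ exists, then every collision $(a,b)$ has $a\in F$ or $b\in F$; since for each fixed $a$ the set $\{b:tb=sa\}$ is a left solution set and hence finite by weak left cancellativity, and symmetrically for each fixed $b$, only finitely many collisions can meet the finite set $F$. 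So the target reduces to: choose an infinite $D\subseteq B$ so that for every pair of distinct $s,t\in S$ there are only finitely many $(a,b)\in D\times D$ with $sa=tb$.

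Next I would record the two structural facts that I expect to be the whole content of the two cancellation hypotheses. Diagonal collisions are impossible for distinct $s,t$: if $sa=ta$, then right cancellativity forces $s=t$; hence every collision has $a\neq b$. And for fixed $u,v\in S$ the set $\{d\in S:ud=v\}$ is a left solution set, hence finite by weak left cancellativity. The former kills the diagonal, and the latter keeps finite the sets I must avoid during the construction.

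The construction is an induction of length $\omega$. Since $S$ is countable (note $B\subseteq S$ infinite forces $S$ infinite), enumerate the ordered pairs of distinct elements as $\langle(s_k,t_k)\rangle_{k<\omega}$. I would choose $d_n\in B$, for $n<\omega$, all distinct, so that at stage $n$ the element $d_n$ lies outside the finite set
\[
\{d_0,\ldots,d_{n-1}\}\ \cup\ \bigcup_{k<n,\ m<n}\bigl(\{d:s_kd=t_kd_m\}\cup\{d:t_kd=s_kd_m\}\bigr).
\]
Each set in the union is a left solution set, hence finite, so the displayed set is finite and a legal choice $d_n\in B$ exists because $B$ is infinite. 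Put $D=\{d_n:n<\omega\}$.

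Finally I would verify the collision bound. Fix distinct $s,t$, pick $k$ with $(s,t)=(s_k,t_k)$, and claim $F=\{d_0,\ldots,d_k\}$ works. Suppose $a=d_i$ and $b=d_j$ lie in $D\setminus F$, so $i,j>k$, and suppose $s d_i=t d_j$. If $i=j$, then $a=b$ and right cancellativity gives $s=t$, a contradiction; so $i\neq j$. If $i>j$, then at stage $i$ we had $k<i$ and $j<i$, so $d_i$ was chosen outside $\{d:s_kd=t_kd_j\}$, contradicting $s_k d_i=t_k d_j$. If $j>i$, then at stage $j$ we had $k<j$ and $i<j$, so $d_j$ was chosen outside $\{d:t_kd=s_kd_i\}$, again a contradiction. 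Hence no collision for $(s,t)$ survives outside $F$, giving the lemma. The one delicate point is the bookkeeping linking the index $k$ of the pair to the deadline stage: every potential collision $s_k d_i=t_k d_j$ with $\max(i,j)>k$ is explicitly forbidden at stage $\max(i,j)$, so the surviving collisions are confined to indices at most $k$ and are therefore finite in number.
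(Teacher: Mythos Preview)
Your proof is correct and follows essentially the same approach as the paper: enumerate the off-diagonal pairs $(s_k,t_k)$, inductively choose $d_n\in B$ avoiding the finitely many left solution sets $\{d:s_kd=t_kd_m\}$ and $\{d:t_kd=s_kd_m\}$ for $k,m<n$, and then for a given pair $(s,t)=(s_k,t_k)$ take $F=\{d_0,\ldots,d_k\}$ and argue via right cancellation and the stage-$\max(i,j)$ restriction. Your preliminary reduction to finiteness of the collision set is a correct but optional piece of exposition that the paper omits.
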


\begin{proof} Let $\Delta=\{(s,s):s\in S\}$ and enumerate $(S\times S)\setminus\Delta$ as
$\langle (s_n,t_n)\rangle_{n=1}^\infty$.  Pick $a_1\in B$. Assume $n\in \ben$ and we 
have chosen $\langle a_i\rangle_{i=1}^n$.  Let $W_n=\{b\in S:$ there exist $i,j\in\nhat{n}$
such that $s_ia_j=t_ib$ or $s_ib=t_ia_j\}$.  Then $W_n$ is the union of finitely many left solution sets, so
is finite.  Pick $a_{n+1}\in B\setminus(W_n\cup\{a_1,a_2,\ldots,a_n\})$.

Let $D=\{a_n:n\in\ben\}$. Let $s$ and $t$ be distinct members of $S$ and pick $n$ such that
$(s,t)=(s_n,t_n)$.  Let $F=\big\{a_i:i\in\nhat{n}\big\}$. To see that $F$ is as required,
let $a,b\in D\setminus F$ and suppose $sa=tb$. Then by right cancellation, $a\neq b$.
Pick $m>n$ and $r>n$ such that $a=a_m$ and $b=a_r$. If $m<r$, then $a_r\in W_{r-1}$.
If $r<m$, then $a_m\in W_{m-1}$.
\end{proof}

\begin{theorem}\label{rpcancel} Let $S$ be a countable cancellative semigroup. If $p\in \beta S\setminus K(\beta S)$, 
then there exists an infinite $D\subseteq S$ such that for every $r\in D^*$, $rp$ is right cancelable
in $\beta S$.\end{theorem}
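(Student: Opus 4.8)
The plan is to recast right cancelability of $rp$ as a surjectivity statement via Stone duality, and then to realize the required preimages using the combinatorial separation supplied by Lemma~\ref{satb}, with the hypothesis $p\notin K(\beta S)$ guaranteeing that $rp$ is not trapped in the minimal ideal. First I would reformulate the goal. Since $\beta S$ is right topological, $\rho_{rp}$ is continuous, so $rp$ is right cancelable exactly when $\rho_{rp}:\beta S\to\beta S$ is injective. For $C\subseteq S$ put $\phi(C)=\{s\in S:s^{-1}C\in rp\}$; one checks that $\phi$ is a Boolean homomorphism of $\mathcal{P}(S)$ and that $\rho_{rp}^{-1}[\,\overline C\,]=\overline{\phi(C)}$, so $\phi$ is the pullback of $\rho_{rp}$ on clopen algebras. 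By Stone duality, $\rho_{rp}$ is injective if and only if $\phi$ is surjective, i.e. for every $E\subseteq S$ there is some $C\subseteq S$ with $\{s\in S:s^{-1}C\in rp\}=E$. Unwinding the product, $s^{-1}C\in rp$ means $\{w\in S:(sw)^{-1}C\in p\}\in r$, so the target is expressed purely in terms of $r$, $p$, and the chosen $C$.

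Next I would explain the role of $p\notin K(\beta S)$. Because $S$ is cancellative, every element of $S$ is left cancelable in $\beta S$ by \cite[Lemma 8.1]{HS}, so Theorem~\ref{charunifrec} yields that $p\notin K(\beta S)$ is equivalent to $\beta Sp$ not being a minimal left ideal. This equivalence is exactly the room I need: if instead $p\in K(\beta S)$, then $\beta Sp\subseteq K(\beta S)$ and $rp$ would lie in a minimal left ideal $L$; since $L$ contains distinct idempotents $e\ne e'$, each acting as a left identity on $L$ (\cite[Chapter 1]{HS}), we would have $e(rp)=rp=e'(rp)$, so $rp$ could not be right cancelable. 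Thus $p\notin K(\beta S)$ is necessary, and more usefully it lets me select a set $A\in p$ witnessing the non-minimality of $\beta Sp$, against which the construction can diagonalize.

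Then I would build $D$. As $S$ is countable, right cancellative and weakly left cancellative, Lemma~\ref{satb} applies; I would feed it an infinite $B\subseteq S$ adapted to $p$ (for instance chosen so that its tails sit inside a descending family of sets of $p$) and take the resulting $D\subseteq B$. The conclusion of Lemma~\ref{satb} says that for distinct $s,t\in S$ there is a finite $F\subseteq D$ with $s(D\setminus F)\cap t(D\setminus F)=\emptyset$, so for any $r\in D^*$ the left translates $sr$ are separated by disjoint clopen sets and collisions $sa=tb$ among tail elements are confined to finite sets. This almost-disjointness is the engine that, given $E$, should let me assemble $C$ as a diagonal union $C=\bigcup_{s\in E}sA_s$ with $A_s\in rp$ chosen along an enumeration of $S$, arranging $s\in\phi(C)$ for each $s\in E$ while, for $t\notin E$, every tail witness of $rp$ meets $t^{-1}C$ in a set excluded from $rp$, forcing $t\notin\phi(C)$.

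The main obstacle is precisely this last bookkeeping: realizing an arbitrary, possibly infinite, $E$ with a single $C$ while keeping $t\notin\phi(C)$ simultaneously for all $t\notin E$. The difficulty is that $\{x\in S:tx\in C\}$ is an infinite union (over $s\in E$) of the individually finite collision sets produced by Lemma~\ref{satb}, and one must ensure this union stays outside the ultrafilter $rp$. I expect to control it by a countable diagonalization over an enumeration of $S$, shrinking the $A_s$ rapidly and using the set $A\in p$ coming from the non-minimality of $\beta Sp$ to absorb the collisions, so that the verification of surjectivity of $\phi$ goes through uniformly for every $r\in D^*$.
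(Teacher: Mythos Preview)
Your Stone-duality reformulation is correct, and Lemma~\ref{satb} is indeed one of the two ingredients. But the proposal has a genuine gap at the point you yourself flag, and the fix you sketch does not work. The problem is how you extract information from $p\notin K(\beta S)$ and how you choose $B$. You propose a single ``set $A\in p$ witnessing the non-minimality of $\beta Sp$'' and a $B$ ``adapted to $p$'' via descending members of $p$. Neither suffices: Lemma~\ref{satb} controls collisions $sa=tb$ for $a,b\in D$, but once you multiply by $p$ you must also exclude coincidences of the shape $sdp = tr'p$ with $d\in D$ and $r'\in D^*$, and nothing living purely in $p$ sees these.

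In the paper the hypothesis enters through a fixed $q\in K(\beta S)$, not through $p$. Cancellativity gives $sp\notin K(\beta S)$ for every $s\in S$, hence $sp\notin\beta Sqp$, so one picks $U_s\in sp$ with $\overline{U_s}\cap\beta Sqp=\emptyset$; continuity of $\lambda_t\circ\rho_p$ at $q$ then gives $V_{s,t}\in q$ with $t\overline{V_{s,t}}p\cap\overline{U_s}=\emptyset$. The infinite set $B$ is chosen, via \cite[Theorem~3.36]{HS}, so that $B^*\subseteq\bigcap_{s,t}\overline{V_{s,t}}$; only then is Lemma~\ref{satb} applied inside $B$ to produce $D$. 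The payoff is that for every $r\in D^*$ and all $s,t\in S$ one has $trp\notin\overline{U_s}$, a second separation property your sketch lacks entirely. With both in hand the paper proves injectivity of $\rho_{rp}$ directly: assuming $q_1rp=q_2rp$ with $q_1\neq q_2$, repeated use of \cite[Theorem~3.40]{HS} reduces to either $sd_mp=td_np$ (ruled out by Lemma~\ref{satb} plus left cancelability) or $sdp=tr''p$ with $r''\in D^*$ (ruled out by the $V_{s,t}$ property). Your surjectivity-of-$\phi$ program, missing any analogue of the $V_{s,t}$ step, cannot close the bookkeeping you identify; in particular the vague ``set $A\in p$'' cannot absorb the mixed collisions, because those are governed by neighborhoods of $q$, not of $p$.
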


\begin{proof} Choose $q\in K(\beta S)$. We first claim that for each $s\in S$, 
$sp\notin K(\beta S)$ and in particular, $sp\notin\beta Sqp$.  So suppose 
we have $sp\in K(\beta S)$. Then $sp$ is in a minimal left ideal $L$ of 
$\beta S$. Pick an idempotent $r\in L$. By \cite[Lemma 1.30]{HS}, $sp=spr$.
By \cite[Lemma 8.1]{HS} $s$ is left cancelable in $\beta S$ so $p=pr$, and thus
$p\in K(\beta S)$. This contradiction establishes the claim.  For each
$s\in S$, pick $U_s\in sp$ such that $\overline{U_s}\cap \beta Sqp=\emptyset$. 
For each $s,t\in S$, there exists $V_{s,t}\in q$
such that $\overline {U_s}\cap t\overline {V_{s,t}}p=\emptyset$ 
because $\lambda_t\circ\rho_p(q)\in \beta S\setminus\overline {U_s}$. 

By \cite[Theorem 3.36]{HS}, there exists an infinite subset $B$
of $S$ such that $B^*\subseteq \bigcap_{s,t\in S}\,\overline{V_{s,t}}$. 
Then for every $r\in B^*$ and every $s,t\in S$,
$trp\notin \overline {U_s}$.

By Lemma \ref{satb} pick an infinite subset $D$ of $B$ such that, whenever $s$ and $t$ are distinct elements
of $S$, there is a finite subset $F$ of $D$ such that $sa\neq tb$ whenever $a,b\in D\setminus F$.
Enumerate $D$ as $\langle d_n\rangle_{n=1}^\infty$ and for each distinct $s$ and $t$ in $S$,  pick
$n_{s,t}\in\ben$ such that $sd_m\neq td_n$ whenever $m,n>n_{s,t}$.

We claim that, for every $r\in D^*$, $rp$ is right cancelable in $\beta S$. 
We shall apply \cite[Theorem 3.40]{HS} three times.

Assume that $q_1rp=q_2rp$, where $q_1$ and $q_2$ are distinct elements of $\beta S$. Let $A_1$ and $A_2$
be disjoint subsets of $S$ which are members of $q_1$ and $q_2$ respectively. Since $q_1rp\in cl(A_1rp)$ and
$q_2rp\in cl(A_2rp)$, an application of \cite[Theorem 3.40]{HS} shows that 
either $A_1rp\cap cl(A_2rp)\neq \emp$ or $A_2rp\cap cl(A_1rp)\neq \emp$, and without loss of generality,
we may assume that the former holds.  Thus we have some $s\in A_1$ and $q'\in \overline{A_2}$ such that
$srp=q'rp$.  Now $srp\in\cl(sDp)$ and $q'rp\in\cl\big((S\setminus\{s\})rp\big)$, so either
$sDp\cap\cl\big((S\setminus\{s\})rp\big)\neq \emp$ or $(S\setminus\{s\})rp\cap\cl(sDp)\neq\emp$.
We thus have either
\begin{itemize}
\item[(i)] $sDp\cap \cl\big((S\setminus\{s\})rp\big)\neq \emp$, in which case
we choose $d\in D$ and $y\in\beta S$ such that $sdp=yrp$; or
\item[(ii)] $sDp\cap \cl\big((S\setminus\{s\})rp\big)= \emp$, in which case we pick
$t\in S\setminus\{s\}$ and $r'\in\overline{D}$ such that $sr'p=trp$. Since $sDp\cap \cl\big((S\setminus\{s\})rp\big)= \emp$,
we have $r'\in D^*$.
\end{itemize}

Suppose that (i) holds.  Then $U_{sd}\in sdp$ so $\{v\in S:v^{-1}U_{sd}\in rp\}\in y$, so
pick $v\in S$ such that $U_{sd}\in vrp$.  But $r\in V_{sd,v}$, so this is a contradiction.
Thus (ii) holds.

Now $sr'p\in\cl\{sd_mp:m>n_{s,t}\}$ and 
$trp\in\cl\{td_mp:m>n_{s,t}\}$ so, essentially without loss of generality,
we have $\{sd_mp:m>n_{s,t}\}\cap\cl\{td_mp:m>n_{s,t}\}\neq\emp$.  (We have distinguished
between $s$ and $t$ at this stage, but the arguments below with $s$ and $t$ interchanged
remain valid.)  Thus either
\begin{itemize}
\item[(iii)] there exist $m,n>n_{s,t}$ such that $sd_mp=td_np$; or
\item[(iv)] there exist $m>n_{s,t}$ and $r''\in D^*$ such that $sd_mp=tr''p$.
\end{itemize}

If (iii) holds, then by \cite[Lemma 6.28]{HS}, $sd_m=td_n$, contradicting the choice of $n_{s,t}$.
So (iv) holds.  But $r''\in V_{sd_m,t}$ so $tr''p\notin U_{sd_m}$, a contradiction.
\end{proof}

We now present several results about the dynamical systems considered in Section 3.

\begin{lemma}\label{AcapbetaSp} Let $S$ be a semigroup and let $p$ be 
a right cancelable element of $\beta S$. 
Then for any clopen subset
$E$ of $\beta Sp$, there is some $A\subseteq S$ such that 
$E=\overline A\cap \beta Sp$. 
\end{lemma}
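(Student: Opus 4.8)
The plan is to prove this by a standard Stone-space separation argument, using that $\beta S$ is a compact Hausdorff zero-dimensional space whose clopen sets are exactly the sets $\overline A$ with $A\subseteq S$ (a finite union $\overline{A_1}\cup\cdots\cup\overline{A_n}$ is again of this form, namely $\overline{A_1\cup\cdots\cup A_n}$). First I would observe that $\beta Sp=\rho_p[\beta S]$ is the continuous image of the compact space $\beta S$ under $\rho_p$, hence compact, and therefore closed in the Hausdorff space $\beta S$. Consequently, if $E$ is clopen in the subspace $\beta Sp$, then both $E$ and $E'=\beta Sp\setminus E$ are closed in $\beta Sp$, and so, being closed subsets of a closed subspace, they are closed — hence compact — in $\beta S$. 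Moreover they are disjoint.

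Next I would separate $E$ from $E'$ by a basic clopen set. For each $q\in E$ we have $q\notin E'$; since $E'$ is closed and the sets $\overline B$ form a basis for the open sets of $\beta S$, I can choose $A_q\subseteq S$ with $q\in\overline{A_q}$ and $\overline{A_q}\cap E'=\emp$. The family $\{\overline{A_q}:q\in E\}$ is then an open cover of the compact set $E$, so finitely many members cover it, say $E\subseteq\overline{A_{q_1}}\cup\cdots\cup\overline{A_{q_n}}$. Setting $A=A_{q_1}\cup\cdots\cup A_{q_n}$ gives $\overline A\supseteq E$ while $\overline A\cap E'=\emp$, since each $\overline{A_{q_i}}$ misses $E'$.

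Finally I would verify that this $A$ works. From $E\subseteq\overline A$ and $E\subseteq\beta Sp$ we get $E\subseteq\overline A\cap\beta Sp$. Conversely, any point of $\overline A\cap\beta Sp$ lies in $\beta Sp=E\cup E'$; it cannot lie in $E'$ because $\overline A\cap E'=\emp$, so it lies in $E$. Hence $\overline A\cap\beta Sp\subseteq E$, and equality follows.

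I expect no serious obstacle here: the argument is purely topological, and the only points requiring care are confirming that $\beta Sp$ is closed — so that closedness in the subspace passes to closedness in $\beta S$ and the compactness needed for the finite subcover is available — and the bookkeeping that finite unions of basic clopen sets are again of the form $\overline A$. I should note that right cancelability of $p$ appears not to be needed for the statement itself; it only guarantees, via injectivity of $\rho_p$, that $\rho_p\colon\beta S\to\beta Sp$ is a homeomorphism, which is presumably the feature that makes the lemma useful in what follows.
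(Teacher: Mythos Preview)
Your proof is correct and is essentially the same compactness argument as the paper's: both cover the compact open set $E$ by finitely many basic clopen sets $\overline{A_q}$ whose traces on $\beta Sp$ lie inside $E$ (your condition $\overline{A_q}\cap E'=\emp$ is exactly $\overline{A_q}\cap\beta Sp\subseteq E$), and then take $A$ to be the union of the corresponding subsets of $S$. Your observation that right cancelability of $p$ plays no role in the argument is also correct---the paper's own proof does not use it either.
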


\begin{proof} Let $E$ be a clopen subset of $\beta Sp$.
Let ${\cal D}=\{\overline D\cap\beta Sp:D\subseteq S
\hbox{ and }\overline D\cap\beta Sp\subseteq E\}$.  Since $\{\overline D\cap\beta Sp:D\subseteq S\}$
is a basis for the topology of $\beta Sp$ and $E$ is open in $\beta Sp$, we have that
$E=\bigcup{\cal D}$. Since $E$ is compact, pick finite ${\cal F}\subseteq{\cal P}(S)$
such that $E=\bigcup_{D\in{\cal F}}(\overline D\cap\beta Sp)$ and let
$A=\bigcup{\cal F}$.\end{proof}

\begin{theorem}\label{rtcancimpsur} Let $S$ be a semigroup. Let 
$(Y,\langle T_s\rangle_{s\in S})$
be the dynamical system of Lemma {\rm \ref{stotwodyn}} determined by $R=S$. Let $p\in\beta S$.
If $p$ is right cancelable in $\beta S$, then $T_p:Y\to Y$ is surjective.
\end{theorem}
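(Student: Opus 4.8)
The plan is to fix an arbitrary target point $w\in Y$ and produce a point $x\in Y$ with $T_p(x)=w$. Recall that for $x\in Y$ and $t\in S$ we have $T_p(x)(t)=\widetilde x(tp)$, where $\widetilde x$ is the continuous extension of $x$ to $\beta S$; if $x=\cchi_A$ is the characteristic function of a set $A\subseteq S$, then $\widetilde x$ is the characteristic function of $\overline A$. Thus finding $x$ amounts to finding a set $A\subseteq S$ with the property that, for every $t\in S$, $tp\in\overline A$ exactly when $w(t)=1$. Equivalently, writing $W=w^{-1}[\{1\}]$, I need a clopen subset $E$ of $\beta Sp$ whose intersection with $Sp$ is precisely $\{tp:t\in W\}$; once such an $E$ is produced, Lemma~\ref{AcapbetaSp} (which applies because $p$ is right cancelable) supplies a set $A\subseteq S$ with $E=\overline A\cap\beta Sp$, and then, since $tp\in\beta Sp$ always, $\widetilde x(tp)=1$ iff $tp\in E$ iff $t\in W$ iff $w(t)=1$, giving $T_p(x)=w$.

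To build $E$ I would exploit that right cancelability of $p$ is exactly injectivity of $\rho_p$ on $\beta S$. Since $\rho_p$ is continuous and $\beta S$ is compact Hausdorff, $\rho_p$ is then a homeomorphism of $\beta S$ onto $\beta Sp$. The set $\overline W$ is clopen in $\beta S$, so $E:=\rho_p[\,\overline W\,]=\overline W p$ is clopen in $\beta Sp$. It remains to check that $E\cap Sp=\{tp:t\in W\}$: for $t\in W$ we have $t\in\overline W$, so $tp\in E$; and for $t\in S$ with $tp\in E$, injectivity of $\rho_p$ forces the unique preimage of $tp=\rho_p(t)$ to be $t$, so $t\in\overline W$, and since $t\in S$ this gives $t\in W$. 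This is where right cancelability does the essential work, both here and in the hypothesis of Lemma~\ref{AcapbetaSp}.

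Assembling these pieces, I take $A$ from Lemma~\ref{AcapbetaSp} and set $x=\cchi_A\in Y$; the verification $T_p(x)(t)=w(t)$ for all $t\in S$ is then the routine computation sketched above. I expect no serious obstacle beyond correctly identifying the separation-by-a-clopen-set reformulation and checking that injectivity of $\rho_p$ makes the assignment $tp\mapsto w(t)$ both consistent and realizable as a clopen subset of $\beta Sp$; the compactness argument that $\rho_p$ is a homeomorphism and the invocation of Lemma~\ref{AcapbetaSp} are standard.
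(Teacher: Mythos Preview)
Your proposal is correct and follows essentially the same route as the paper's proof: both use that $\rho_p$ is a homeomorphism onto $\beta Sp$, push the clopen set $\overline W$ (the paper writes $\overline B$) forward to a clopen $E=\rho_p[\,\overline W\,]\subseteq\beta Sp$, invoke Lemma~\ref{AcapbetaSp} to realize $E$ as $\overline A\cap\beta Sp$, and then verify $T_p(\cchi_A)=w$ via the equivalence $tp\in\overline A\Leftrightarrow s^{-1}A\in p\Leftrightarrow t\in W$. The only difference is notational and in the phrasing of the final verification.
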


\begin{proof} Note that since $\rho_p:\beta S\to\beta Sp$ is injective and
takes closed sets to closed sets, it is a homeomorphism. 

To see that $T_p$ is surjective, let $z\in Y$, let
$B=\{s\in S:z(s)=1\}$, and let $E=\rho_p[\,\overline B\,]$.
Then $E$ is clopen in $\beta Sp$ so  by Lemma \ref{AcapbetaSp} pick  $A\subseteq S$ such that
$E=\overline A\cap \beta Sp$. Let $x$ be the characteristic function of
$A$ in $Y$.  
We claim that $T_p(x)=z$.  For this, it suffices that for each
$s\in S$, $\{t\in S:T_t(x)(s)=z(s)\}\in p$. So let 
$s\in S$. Note that $\{t\in S:T_t(x)(s)=1\}=\{t\in S:x(st)=1\}=s^{-1}A$.
Also $s^{-1}A\in p$ if and only if $s\in \rho_p^{-1}[\,\overline A\cap \beta Sp]$
so $s\in B$ if and only if $s^{-1}A\in p$.

If $z(s)=1$, then $s\in B$ so $s^{-1}A\in p$ so
$\{t\in S:T_t(x)(s)=z(s)\}\in p$. If $z(s)=0$, then $s\notin B$ so $s^{-1}A\notin p$ so
$\{t\in S:T_t(x)(s)=z(s)\}\in p$. 
\end{proof}

Notice that the hypotheses of the following corollary hold if $S$ has any
right cancelable element.

\begin{corollary}\label{rtcanciffpsurY} Let $S$ be a semigroup. Let 
$(Y,\langle T_s\rangle_{s\in S})$
be the dynamical system of Lemma {\rm \ref{stotwodyn}} determined by $R=S$. Let $p\in\beta S$.
Assume that for whenever $q$ and $r$ are distinct elements of $\beta S$, there
exists $s\in S$ such that $sq\neq sr$.
Then $T_p:Y\to Y$ is surjective if and only if
$p$ is right cancelable in $\beta S$.\end{corollary}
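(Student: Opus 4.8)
The plan is to treat the two implications separately. One of them, that right cancelability of $p$ implies surjectivity of $T_p$, is exactly Theorem \ref{rtcancimpsur}, so the only thing left to prove is the converse, and I would do this in contrapositive form: if $p$ is not right cancelable in $\beta S$, then $T_p\colon Y\to Y$ is not surjective.

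The engine of the argument is a single factorization identity. For every $x\in Y$ I would show that the continuous extension of $T_p(x)$ to $\beta S$ satisfies $\widetilde{T_p(x)}=\widetilde x\circ\rho_p$. Both sides are continuous functions from $\beta S$ to $\{0,1\}$, and for $t\in S$ they agree, since $\widetilde{T_p(x)}(t)=T_p(x)(t)=\widetilde x(tp)=\widetilde x\big(\rho_p(t)\big)$, using the identity $T_p(x)(t)=\widetilde x(tp)$ together with $\rho_p(t)=tp$; as $S$ is dense in $\beta S$ and $\rho_p$ is continuous, the two continuous functions coincide on all of $\beta S$. The consequence I actually want is that every $w$ in the range of $T_p$ is constant along the fibers of $\rho_p$: if $q,r\in\beta S$ satisfy $qp=rp$, that is $\rho_p(q)=\rho_p(r)$, and $w=T_p(x)$, then $\widetilde w(q)=\widetilde x\big(\rho_p(q)\big)=\widetilde x\big(\rho_p(r)\big)=\widetilde w(r)$.

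Now suppose $p$ is not right cancelable, and pick distinct $q,r\in\beta S$ with $qp=rp$. Since $q\neq r$, I can choose $A\subseteq S$ with $A\in q$ and $A\notin r$, and set $z=\cchi_A\in Y$. Then $\widetilde z=\cchi_{\overline A}$, so $\widetilde z(q)=1$ while $\widetilde z(r)=0$. By the fiber-constancy established above, $z$ cannot lie in the range of $T_p$, so $T_p$ is not surjective. This proves the contrapositive and, together with Theorem \ref{rtcancimpsur}, the stated equivalence.

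The only step that takes any thought is the factorization identity $\widetilde{T_p(x)}=\widetilde x\circ\rho_p$; once it is in hand, both the fiber-constancy of range elements and the construction of the separating $z=\cchi_A$ are immediate, the latter being just the fact that distinct ultrafilters are separated by some $\overline A$. I would remark that this route invokes neither Lemma \ref{AcapbetaSp} (which drove the proof of Theorem \ref{rtcancimpsur}) nor the hypothesis that the translations $\lambda_s$ separate points of $\beta S$: the contrapositive goes through for an arbitrary semigroup. Thus the separating hypothesis, which holds for instance whenever $S$ has a left identity or a right cancelable element, is convenient context but is not used in this proof of the equivalence.
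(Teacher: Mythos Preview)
Your proof is correct, and it takes a genuinely different route from the paper's.

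The paper argues the direction ``surjective $\Rightarrow$ right cancelable'' by contradiction inside $Y$: assuming $T_p$ is surjective and $qp=rp$ with $q\neq r$, it first shows $T_q=T_r$ as self-maps of $Y$ (since every $x$ is $T_p(z)$ for some $z$, one has $T_q(x)=T_{qp}(z)=T_{rp}(z)=T_r(x)$), and then invokes the separation hypothesis to pick $s\in S$ with $sq\neq sr$, takes $A\in sq\setminus sr$, and checks $T_q(\cchi_A)(s)=1\neq 0=T_r(\cchi_A)(s)$. The hypothesis is essential there because the argument evaluates only at points of $S$.

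Your argument instead passes to continuous extensions: from $\widetilde{T_p(x)}=\widetilde x\circ\rho_p$ you get that every element of the range of $T_p$ has an extension constant on each $\rho_p$-fiber, while $\cchi_A$ with $A\in q\setminus r$ violates this at the fiber $\{q,r\}$. This is cleaner and, as you observe, does not use the separation hypothesis at all; so you have in fact established the equivalence for an arbitrary semigroup, strengthening the corollary. The paper's approach has the minor virtue of staying within $Y$ and evaluating only at points of $S$, but at the cost of the extra hypothesis.
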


\begin{proof} The necessity is Theorem \ref{rtcancimpsur}.

So assume that $T_p$ is surjective and suppose that we have distinct
$q$ and $r$ in $\beta S$ such that $qp=rp$.  We claim that 
$T_q=T_r$.  To see this, let $x\in Y$ be given. Pick $z\in Y$ such that
$T_p(z)=x$. Then $T_q(x)=T_q\big(T_p(z)\big)=T_{qp}(z)=T_{rp}(z)=
T_r\big(T_p(z)\big)=T_r(x)$.  

Pick $s\in S$ such that $sq\neq sr$,
pick $A\in sq\setminus sr$, and let $x$ be the characteristic function
of $A$ in $Y$.  Then $A\subseteq\{t\in S:T_t(x)(s)=1\}$ so $T_q(x)(s)=1$
and $S\setminus A\subseteq\{t\in S:T_t(x)(s)=0\}$ so $T_r(x)(s)=1$.
\end{proof}

\begin{theorem}\label{rtcanciffpsurX} Let $S$ be a semigroup and
let $Q=S\cup\{e\}$ where $e$ is an identity adjoined to $S$. Let
$(X,\langle T_s\rangle_{s\in S})$
be the dynamical system of Lemma {\rm \ref{stotwodyn}} determined by $R=Q$ and
let $p\in\beta S$. Then $T_p:X\to X$ is surjective if and only if
$p$ is right cancelable in $\beta Q$.
\end{theorem}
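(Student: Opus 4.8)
The plan is to deduce the statement from Corollary~\ref{rtcanciffpsurY} applied to the semigroup $Q$ itself. Although the system in the statement is indexed by $S$ (with $R=Q$), the map $T_p$ for $p\in\beta S$ does not depend on this choice of index set. Indeed, the dynamical system of Lemma~\ref{stotwodyn} in which both the subsemigroup and $R$ are taken to be $Q$ has the same phase space $X=\xtotwo{Q}$, and for $p\in\beta S\subseteq\beta Q$, $x\in X$, and $t\in Q$ one has $T_p(x)(t)=\widetilde x(tp)$ in either system. (Here I would note that, because $S\in p$, the $p$-limit over $Q$ computing $T_p$ agrees with the $p$-limit over $S$.) So the two copies of $T_p$ coincide, and it suffices to prove the equivalence for the $Q$-indexed system.

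First I would verify that Corollary~\ref{rtcanciffpsurY} applies to $Q$. Its hypothesis requires that whenever $q$ and $r$ are distinct elements of $\beta Q$ there is some $t\in Q$ with $tq\neq tr$. Since $e$ is a two-sided identity of $Q$, the map $\lambda_e$ is the identity of $\beta Q$, so taking $t=e$ gives $eq=q\neq r=er$; equivalently, $e$ is a (trivially) right cancelable element of $Q$, so the hypothesis holds by the remark preceding Corollary~\ref{rtcanciffpsurY}. Applying that corollary with $Q$ in the role of the semigroup gives, for every $p\in\beta Q$ and in particular for every $p\in\beta S$, that $T_p:X\to X$ is surjective if and only if $p$ is right cancelable in $\beta Q$, which is exactly the assertion.

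I would also record why the two directions hold, since this is what Corollary~\ref{rtcanciffpsurY} packages for $Q$. The forward implication is Theorem~\ref{rtcancimpsur} for $Q$: if $p$ is right cancelable in $\beta Q$, then $\rho_p$ is a homeomorphism of $\beta Q$ onto $\beta Qp$, and a prescribed $z\in X$ is lifted through $T_p$ by applying Lemma~\ref{AcapbetaSp} over $Q$ to the clopen set $\rho_p[\,\overline B\,]$, where $B=\{t\in Q:z(t)=1\}$. For the converse, if $p$ is not right cancelable in $\beta Q$, pick distinct $q,r\in\beta Q$ with $qp=rp$; were $T_p$ surjective, the identity $T_q\circ T_p=T_{qp}=T_{rp}=T_r\circ T_p$ would force $T_q=T_r$ on $X$. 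This fails because evaluating at the coordinate $e$ gives $T_q(x)(e)=\widetilde x(eq)=\widetilde x(q)$, so for $A\in q\setminus r$ and $x=\cchi_A$ we get $T_q(x)(e)=1\neq 0=T_r(x)(e)$.

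The only genuinely delicate issue here is bookkeeping rather than mathematics. I must make sure that passing from the $S$-indexed system to the $Q$-indexed system leaves $T_p$ unchanged (it does, as noted above), and that right cancelability is tested in $\beta Q$ rather than in $\beta S$. Once this is pinned down the theorem is an immediate specialization of Corollary~\ref{rtcanciffpsurY}, with the adjoined identity $e$ doing all the work: it guarantees the separation hypothesis of that corollary, and concretely it lets the $e$-coordinate of $T_q(x)$ read off the ultrafilter $q$, which is precisely what distinguishes $T_q$ from $T_r$.
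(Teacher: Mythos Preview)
Your proof is correct and takes a genuinely different route from the paper's. The paper argues both directions directly: for sufficiency it works inside $\beta S$, observing that right cancelability in $\beta Q$ forces $p\notin\beta Sp$, applies Lemma~\ref{AcapbetaSp} to get $A\subseteq S$ controlling the $S$-coordinates, and then adjusts $A$ by a set $P\in p$ disjoint from $\beta Sp$ to force the correct value at the $e$-coordinate; for necessity it splits into the cases $e\in\{q,r\}$ and $q,r\in\beta S$ and derives a contradiction at the $e$-coordinate in each. You instead observe that the $S$-indexed and $Q$-indexed systems on $X=\xtotwo{Q}$ give the same map $T_p$ for $p\in\beta S$ (since $S\in p$), and then invoke Corollary~\ref{rtcanciffpsurY} with $Q$ in place of $S$, the separation hypothesis being trivially satisfied by $t=e$. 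Your reduction is cleaner and avoids the case split and the ad hoc adjustment of $A$; the paper's argument, by contrast, is self-contained and makes explicit the mechanism by which the adjoined identity coordinate records the ultrafilter, which your final paragraph also isolates. The only point worth stating a bit more carefully is the identification of $T_p$ in the two systems: you should note that $\beta S$ sits in $\beta Q$ as the clopen set $\overline S$, so that $p\in\beta S$ is literally an ultrafilter on $Q$ containing $S$, and hence the $p$-limit over $Q$ agrees with the $p$-limit over $S$; you do say this, but it is the crux of the reduction.
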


\begin{proof} Sufficiency. Note that $\rho_p:\beta S\to\beta Sp$ is a homeomorphism.
Note also that $p\notin \beta Sp$. (If we had $p=qp$ for some $q\in\beta S$, then
we would have $ep=qp$.) Let $x\in X$ and let $B=\{s\in S:x(s)=1\}$. By Lemma
\ref{AcapbetaSp}, pick $A\subseteq S$ such that $\rho_p[\,\overline B\,]=
\overline A\cap \beta Sp$. Pick $P\in p$ such that $\overline P\cap \beta Sp=\emp$.
If $x(e)=1$, let $D=A\setminus P$. If $x(e)=0$, let $D=A\cup B$.  Let
$z$ be the characteristic function of $D$ in $X$. 

We claim that $T_p(z)=x$. As in the proof of Theorem \ref{rtcancimpsur}, we
see that for $s\in S$, $T_p(z)(s)=x(s)$.
Regardless of the value of $x(e)$, we have that 
$P\subseteq\{s\in S:T_s(z)(e)=x(e)\}$, so $T_p(z)(e)=x(e)$. 

Necessity. Suppose that $T_p$ is surjective and we have $q\neq r$ in $\beta Q$ such that
$qp=rp$. Assume first
that $e\in \{q,r\}$, so without loss of generality, $q=e$. Let
$x$ be the characteristic function of $S$ in $X$ and pick
$z\in X$ such that $T_p(z)=x$. Then
$0=x(e)=T_p(z)(e)=T_{rp}(z)(e)=T_r\big(T_p(z)\big)(e)
=T_r(x)(e)=1$, a contradiction.

So we can assume that $q$ and $r$ are in $\beta S$.  Pick
$A\in q\setminus r$ and let $A$ be the characteristic function
of $A$ in $X$. Pick $z\in X$ such that $T_p(z)=x$. Then 
$0=T_r(x)(e)=T_{rp}(z)(e)=T_{qp}(z)(e)=T_q\big(T_p(z)\big)(e)
=T_q(x)(e)=1$, a contradiction. 
\end{proof}

\begin{theorem}\label{Singroup}
Let $S$ be a countable semigroup which can be embedded in a group and assume that
$S$ can be enumerated as $\langle s_t\rangle_{t=0}^\infty$ so that if $u,v\in S$, $i,j\in\omega$ with
$i<j$, and $s_iu=s_jv$, then $s_0s_i^{-1}s_j\in S$. 
Let $(Y,\langle T_s\rangle_{s\in S})$
be the dynamical system of Lemma {\rm \ref{stotwodyn}} determined by $R=S$ and let $p\in\beta S$. 
The $T_p$ is surjective if and only if there exists $x\in Y$ such that $T_p(x)$ is the characteristic
function of $\{s_0\}$ in $Y$.\end{theorem}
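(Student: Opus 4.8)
The plan is to reduce the statement to the right cancelability of $p$ in $\beta S$ and then prove that reduced statement using the special enumeration hypothesis. Since $S$ embeds in a group it is left cancellative, so every $s\in S$ is left cancelable in $\beta S$ by \cite[Lemma 8.1]{HS}; hence whenever $q\neq r$ in $\beta S$ and $s\in S$ we have $sq\neq sr$, and the hypothesis of Corollary \ref{rtcanciffpsurY} holds. Thus $T_p$ is surjective if and only if $p$ is right cancelable in $\beta S$. The ``only if'' half of the theorem is then immediate: if $T_p$ is surjective, the characteristic function of $\{s_0\}$, being a point of $Y$, lies in its range. For the substantial ``if'' half, I would first translate the hypothesis using $T_p(x)(t)=\widetilde x(tp)$: the statement that $T_p(x)$ is the characteristic function of $\{s_0\}$ says exactly that, with $A=x^{-1}[\{1\}]$, $\{t\in S:t^{-1}A\in p\}=\{s_0\}$; equivalently, writing $P=s_0^{-1}A\in p$, that $s_i^{-1}A\notin p$ for every $i\geq 1$. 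By Corollary \ref{rtcanciffpsurY} it then suffices to show that this forces $p$ to be right cancelable.

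The key step is the separation claim that for all $a<b$ in $\omega$, $s_b^{-1}(s_aP)\notin p$. To prove it, note that if $y\in s_b^{-1}(s_aP)$ then $s_by=s_aw$ for some $w\in P$; this is a collision with $a<b$, so the enumeration hypothesis gives $c:=s_0s_a^{-1}s_b\in S$, and $c\neq s_0$ because $s_a\neq s_b$. Left multiplying $s_by=s_aw$ by $s_0s_a^{-1}$ yields $cy=s_0w\in A$, so $y\in c^{-1}A$. Thus $s_b^{-1}(s_aP)\subseteq c^{-1}A=s_m^{-1}A$ for the index $m\geq 1$ with $c=s_m$, and since $s_m^{-1}A\notin p$ the claim follows because $p$ is a filter.

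With the claim in hand I would prove right cancelability by contradiction. Suppose $qp=rp$ with $q\neq r$ and pick $C\in q$ with $D:=S\setminus C\in r$. Then $qp\in\cl(Cp)$ and $rp\in\cl(Dp)$, so by \cite[Theorem 3.40]{HS} (as used in the proof of Theorem \ref{rpcancel}) one of the sets, say $Cp$, meets the closure of the other: there is $s_i\in C$ with $s_ip\in\cl(Dp)$, and $i\notin J:=\{j:s_j\in D\}$. Writing $A_k=s_kP$, left cancellation gives $s_i^{-1}A_i=P\in p$, so $s_ip\in\overline{A_i}$, while the claim gives $s_j^{-1}A_i\notin p$ for every $j>i$; hence the open neighborhood $\overline{A_i}$ of $s_ip$ is disjoint from $\{s_jp:j\in J,\ j>i\}$, so $s_ip\notin\cl\{s_jp:j\in J,\ j>i\}$. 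Since $\{j\in J:j<i\}$ is finite, $\cl(Dp)$ equals $\cl\{s_jp:j\in J,\ j>i\}$ together with the finite closed set $\{s_jp:j\in J,\ j<i\}$, so in fact $s_ip=s_{j_0}p$ for some $j_0\in J$ with $j_0<i$. But then the claim (with $a=j_0<b=i$) gives $s_i^{-1}A_{j_0}\notin p$, so $s_ip\notin\overline{A_{j_0}}$, whereas $s_{j_0}p\in\overline{A_{j_0}}$ and $s_ip=s_{j_0}p$, a contradiction. The symmetric case in which $Dp$ meets $\cl(Cp)$ is identical, so $p$ is right cancelable and therefore $T_p$ is surjective.

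The main obstacle is this last argument, and within it the honest difficulty is handling indices $j<i$: the separating set $\overline{A_i}$ only controls larger indices, so one must exploit the fact that there are only finitely many smaller indices to collapse the remaining part of $\cl(Dp)$ to a finite set of points, reducing the problem to a single equality $s_ip=s_{j_0}p$ that the claim again refutes. The enumeration hypothesis enters precisely to pull every collision back onto the distinguished coordinate $s_0$, which is the only coordinate that the set $A$ is assumed to control; this is exactly why the theorem is stated for the characteristic function of $\{s_0\}$ rather than of an arbitrary singleton.
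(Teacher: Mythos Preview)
Your proof is correct and takes a genuinely different route from the paper's. The paper argues constructively: it builds, for each $m$, a set
$B_m\in p$ with the property that whenever $u\in B_m$, $v\in B_k$, $i\leq m$, $j\leq k$ and $s_iu=s_jv$, one must have $i=j$; this makes the assignment $w(s_iu)=y(s_i)$ well defined and produces an explicit preimage $w$ of an arbitrary $y\in Y$. You instead reduce the problem to right cancelability of $p$ via Corollary~\ref{rtcanciffpsurY} (valid because left cancelability in $\beta S$ follows from the group embedding), and then prove right cancelability by your separation claim $s_b^{-1}(s_aP)\notin p$ together with the $F$-space property of $\beta S$ as encoded in \cite[Theorem 3.40]{HS}. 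Both arguments exploit the enumeration hypothesis at the same point---pulling a collision $s_au=s_bv$ back to the distinguished coordinate $s_0$---but yours packages this as a single membership statement about $p$ rather than as a combinatorial well-definedness check. What the paper's approach buys is self-containment: it needs no structural facts about closures in $\beta S$ and yields the preimage explicitly. What your approach buys is conceptual economy: it shows directly that the hypothesis forces $p$ to be right cancelable, tying Theorem~\ref{Singroup} into the equivalence of Corollary~\ref{rtcanciffpsurY} rather than reproving surjectivity from scratch.
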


\begin{proof} The necessity is trivial. 
Assume that we have $x\in Y$ such that $T_p(x)$ is the characteristic
function of $\{s_0\}$ in $Y$.  For $m\in\ben$, let
$D_m=\{s_0s_i^{-1}s_j:i,j\in\ohat{m}\,,\, i<j\hbox{, and }
s_0s_i^{-1}s_j\in S\}$ and note that $s_0\notin D_m$. For each
$m\in\ben$, let 
$$\textstyle B_m=\{s\in S:T_s(x)\in\pi_{s_0}^{-1}[\{1\}]
\cap\bigcap_{i=1}^m\pi_{s_i}^{-1}[\{0\}]\cap\bigcap_{r\in D_m}\pi_r^{-1}[\{0\}]\}\,,$$
and note that $B_m\in p$.  We claim that if $m,k\in\ben$, $u\in B_m$, 
$v\in B_k$, $i\in\ohat{m}$,  $j\in\ohat{k}$, and $s_iu=s_jv$, then
$i=j$.  Suppose instead we have such $m,k,u,v,i,j$ with $i\neq j$ and
assume without loss of generality that $i<j$.  Then $u=s_i^{-1}s_jv$.
By assumption $s_0s_i^{-1}s_j\in S$ so $s_0s_i^{-1}s_j\in D_k$.
Since $u\in B_m$, $1=T_u(x)(s_0)=x(s_0u)$.  Since $v\in B_k$ and
$s_0s_i^{-1}s_j\in D_k$, $0=T_v(x)(s_0s_i^{-1}s_j)=x(s_0s_i^{-1}s_jv)$,
a contradiction.

Now to see that $T_p$ is surjective, let $y\in Y$ be given.  Define $w\in Y$ as 
follows. If $m\in\ben$, $u\in B_m$, and $i\in\ohat{m}$, then 
$w(s_iu)=y(s_i)$. For $s\in S$ which is not of the form 
$s_iu$ for some $m\in\ben$, $u\in B_m$, and $i\in\ohat{m}$, define
$w(s)$ at will.  To see that $T_p(w)=y$, let $U$ be a neighborhood of 
$y$. Pick $m\in\ben$ such that $\bigcap_{i=0}^m\pi_i^{-1}[\{y(s_i)\}]\subseteq U$.
Then $B_m\subseteq U$.
\end{proof}

The following is an immediate corollary of Theorem \ref{Singroup}.

\begin{corollary}\label{ingroup} Let $S$ be a countable group with identity $e$, let $(Y,\langle T_s\rangle_{s\in S})$
be the dynamical system of Lemma {\rm \ref{stotwodyn}} determined by $R=S$, and let $p\in\beta S$. 
The following statements are equivalent.
\begin{itemize}
\item[(a)] $T_p$ is surjective.
\item[(b)] For each $s\in S$, there exists $x\in Y$ such that $T_p(x)$ is the characteristic
function of $\{s\}$.
\item[(c)] There exists $x\in Y$ such that $T_p(x)$ is the characteristic
function of $\{e\}$.
\end{itemize}\end{corollary}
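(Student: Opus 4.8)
The plan is to derive all three equivalences directly from Theorem \ref{Singroup}, exploiting the fact that a group automatically satisfies the enumeration hypothesis of that theorem and that we are free to choose which group element plays the role of $s_0$.

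First I would observe that if $S$ is a group, then for \emph{any} enumeration $\langle s_t\rangle_{t=0}^\infty$ of $S$ the hypothesis of Theorem \ref{Singroup} holds vacuously: whenever $u,v\in S$, $i<j$, and $s_iu=s_jv$, the element $s_0s_i^{-1}s_j$ is a product of members of $S$ and hence lies in $S$, since $S$ is closed under products and inverses. Consequently Theorem \ref{Singroup} applies no matter which element of $S$ is listed first, and it tells us that $T_p$ is surjective if and only if there is some $x\in Y$ with $T_p(x)$ equal to the characteristic function of $\{s_0\}$ in $Y$.

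With this in hand the equivalences fall out by choosing $s_0$ appropriately. For (a)$\Leftrightarrow$(c) I would fix an enumeration with $s_0=e$; Theorem \ref{Singroup} then reads exactly as the equivalence of (a) and (c). For (a)$\Rightarrow$(b), given that $T_p$ is surjective, I would, for each $s\in S$, choose an enumeration of $S$ with $s_0=s$ and apply Theorem \ref{Singroup}: surjectivity of $T_p$ forces the existence of some $x\in Y$ whose image $T_p(x)$ is the characteristic function of $\{s\}$, which is precisely (b). Finally (b)$\Rightarrow$(c) is immediate by specializing to $s=e$.

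Since every step is a direct invocation of Theorem \ref{Singroup} under a suitable relabeling of the enumeration, there is no real obstacle; the only point requiring attention is the verification that the enumeration hypothesis is automatic for groups, which is exactly what licenses moving $s_0$ freely among the elements of $S$.
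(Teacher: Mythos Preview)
Your proposal is correct and matches the paper's approach: the paper simply states that the corollary is an immediate consequence of Theorem \ref{Singroup}, and your argument spells out exactly how, by noting that in a group the enumeration hypothesis is automatic for any choice of $s_0$. One small remark: your proof of (a)$\Rightarrow$(b) via Theorem \ref{Singroup} is more than is needed, since (b) follows trivially from surjectivity of $T_p$ alone (each $\chi_{\{s\}}$ lies in $Y$ and hence in the range of $T_p$); but your version is certainly not wrong.
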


Notice that the hypotheses of the following theorem hold if $S$ is
very weakly left cancellative and right cancellative. If $\kappa$ is regular,
the assumption that for any subset $D$ of $S$ with fewer than 
$\kappa$ members,  $|\{e\in S:(\exists s\in D)(\exists t\in D\setminus\{s\})(se=te)\}|<\kappa$
can be replaced by the assumption that for all distinct $s$ and $t$ in $S$,
$|\{e\in S:se=te\}|<\kappa$.

\begin{theorem}\label{vwlckappa}
Let $S$ be a semigroup with $|S|=\kappa\geq\omega$ 
which is very weakly left cancellative and
has the property that for any subset $D$ of $S$ with fewer than 
$\kappa$ members,  $|\{e\in S:(\exists s\in D)(\exists t\in D\setminus\{s\})(se=te)\}|<\kappa$.
Let $(Y,\langle T_s\rangle_{s\in S})$
be the dynamical system of Lemma {\rm \ref{stotwodyn}} determined by $R=S$.
There is a dense open subset $W$ of $U_\kappa(S)$ such that 
for every $p\in W$, $p$ is right cancelable in $\beta S$ and
$T_p:Y\to Y$ is surjective.\end{theorem}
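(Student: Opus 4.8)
The plan is to reduce everything to producing a dense open set of right cancelable ultrafilters. By Theorem~\ref{rtcancimpsur}, if $p$ is right cancelable in $\beta S$ then $T_p:Y\to Y$ is surjective, so it suffices to exhibit a dense open $W\subseteq U_\kappa(S)$ each of whose members is right cancelable in $\beta S$. I would build $W$ from ``spread out'' subsets of $S$: call a set $D\subseteq S$ with $|D|=\kappa$ \emph{spread} if for every pair of distinct $x,y\in S$ one has $|\{d\in D:(\exists a\in D)(xd=ya)\}|<\kappa$. I will show that every $\kappa$-uniform ultrafilter concentrated on a spread set is right cancelable, and that every $\kappa$-sized subset of $S$ contains a spread set. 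Then $W=\bigcup\{\overline D\cap U_\kappa(S):D\text{ is spread}\}$ is open (each $\overline D$ is clopen in $\beta S$, so $\overline D\cap U_\kappa(S)$ is open in $U_\kappa(S)$), and its members are right cancelable.

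For the first claim, fix a spread set $D$ and $p\in\overline D\cap U_\kappa(S)$, so $D\in p$. I would first record two consequences of spreadness. Given $s\in S$, the set $sD$ lies in $sp$ since $s^{-1}(sD)\supseteq D\in p$; and for each $t\neq s$ the set $\{e\in D:te\in sD\}$ is contained in $\{d\in D:(\exists a\in D)(td=sa)\}$, which has size $<\kappa$ and hence is not in $p$, so $tp\notin\overline{sD}$. Thus $\overline{sD}$ is a neighborhood of $sp$ meeting $\{tp:t\in S\}$ only in $sp$; this gives both that the family $\langle tp\rangle_{t\in S}$ is injective and that $sp\notin\cl\big((S\setminus\{s\})p\big)$ for every $s$. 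Now suppose $p$ were not right cancelable, say $qp=rp$ with $q\neq r$, and choose disjoint $A_1\in q$, $A_2\in r$. Since $\rho_p$ is continuous, $qp\in\cl(A_1p)$ and $rp\in\cl(A_2p)$ coincide; as $\langle tp\rangle_{t\in S}$ is injective, \cite[Theorem 3.40]{HS} yields (without loss of generality) some $s\in A_1$ with $sp\in\cl(A_2p)$. But $A_2\subseteq S\setminus\{s\}$, so $sp\in\cl\big((S\setminus\{s\})p\big)$, contradicting the second consequence. Hence $p$ is right cancelable.

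The heart of the argument is the construction of a spread set inside a given $A\subseteq S$ with $|A|=\kappa$. I would enumerate $(S\times S)\setminus\{(x,x):x\in S\}$ as $\langle(x_\xi,y_\xi)\rangle_{\xi<\kappa}$ and choose $d_\alpha\in A$ by recursion. At stage $\alpha$, writing $D^{(\alpha)}=\{x_\xi,y_\xi:\xi<\alpha\}$, I avoid the three sets $V_\alpha=\{e\in S:(\exists s,t\in D^{(\alpha)},\,s\neq t)(se=te)\}$, $M_\alpha=\bigcup_{\xi,\gamma<\alpha}\big(\{c:x_\xi c=y_\xi d_\gamma\}\cup\{c:y_\xi c=x_\xi d_\gamma\}\big)$, and $\{d_\gamma:\gamma<\alpha\}$. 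Here the hypothesis that each small common-solution set is small makes $|V_\alpha|<\kappa$ (apply it to $D^{(\alpha)}$, which has $<\kappa$ elements), while very weak left cancellativity makes $|M_\alpha|<\kappa$ (it is a union of $<\kappa$ left solution sets); since $|A|=\kappa$ the choice of $d_\alpha$ is possible. Setting $D=\{d_\alpha:\alpha<\kappa\}$, the verification of spreadness is the delicate step: given distinct $x,y$, pick $\xi$ with $(x,y)=(x_\xi,y_\xi)$ and check that any equality $xd_\mu=yd_\nu$ forces $\max\{\mu,\nu\}\leq\xi$ --- a diagonal collision $\mu=\nu>\xi$ is blocked by $V_{\max\{\mu,\nu\}}$ and an off-diagonal one by $M_{\max\{\mu,\nu\}}$ --- so the collision set is contained in $\{d_\mu:\mu\leq\xi\}$ and has size $<\kappa$.

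Finally, density follows: any nonempty basic open subset of $U_\kappa(S)$ has the form $\overline A\cap U_\kappa(S)$ with $|A|\ge\kappa$; shrinking $A$ to size $\kappa$ and passing to a spread $D\subseteq A$, the set $\overline D\cap U_\kappa(S)$ is a nonempty (by \cite[Corollary 3.14]{HS}, a $\kappa$-uniform ultrafilter on $D$ exists) open subset of $W$ contained in $\overline A\cap U_\kappa(S)$. I expect the main obstacle to be the recursive construction: correctly apportioning the diagonal collisions to the common-solution hypothesis and the off-diagonal collisions to very weak left cancellativity, and then proving the confinement bound $\max\{\mu,\nu\}\le\xi$. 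A secondary point requiring care is checking that \cite[Theorem 3.40]{HS} applies, which is exactly why the injectivity of $\langle tp\rangle_{t\in S}$ was extracted first.
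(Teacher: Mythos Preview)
Your proof is correct and follows essentially the same strategy as the paper: reduce to right cancelability via Theorem~\ref{rtcancimpsur}, then inductively build inside each $\kappa$-sized set a $\kappa$-sized subset $D$ with a collision-avoidance property strong enough to force every $p\in\overline D\cap U_\kappa(S)$ to be right cancelable. The inductive constructions are nearly identical---both apportion the ``diagonal'' collisions $se=te$ to the second hypothesis and the ``off-diagonal'' ones $sc=td_\gamma$ to very weak left cancellativity, and both verify the resulting bound by a case split on which index is largest.

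The one genuine difference is in how right cancelability is extracted from the constructed set. The paper enumerates $S$ itself as $\langle s_\gamma\rangle_{\gamma<\kappa}$ and arranges the condition: if $\gamma<\delta$, $\mu<\beta\le\delta$, and $\mu\neq\gamma$, then $s_\gamma t_\delta\neq s_\mu t_\beta$. From this it reads off, for disjoint $C\in q$ and $D'\in r$, \emph{explicitly disjoint} sets $\{s_\gamma t_\alpha:\gamma<\alpha,\ s_\gamma\in C\}\in qp$ and $\{s_\mu t_\beta:\mu<\beta,\ s_\mu\in D'\}\in rp$, giving a one-line contradiction with no appeal to any separation theorem. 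Your ``spread'' condition instead yields that the family $\langle sp\rangle_{s\in S}$ is strongly discrete (each $sp$ isolated from the rest by the clopen neighborhood $\overline{sD}$), and you then invoke \cite[Theorem~3.40]{HS} to pass from $qp=rp\in\cl(A_1p)\cap\cl(A_2p)$ to some $sp\in\cl(A_2p)$. The paper's route is a bit more self-contained, while yours isolates a reusable intermediate notion (spread sets give strongly discrete orbits) and makes the role of strong discreteness explicit; either packaging works.
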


\begin{proof} We show that for any $C\in [S]^\kappa$, there exists
$B\in [C]^\kappa$ such that for every $p\in \overline B\cap U_\kappa(S)$, $p$ is right cancelable in $\beta S$ and
$T_p:Y\to Y$ is surjective.

Enumerate $S$ as $\langle s_\gamma\rangle_{\gamma<\kappa}$.
Choose $t_0\in C$.  Let $0<\alpha<\kappa$ and assume that
we have chosen $\langle t_\delta\rangle_{\delta<\alpha}$ in $C$ satisfying
the following inductive hypotheses:
\begin{itemize}
\item[(1)] If $\gamma<\delta$, then $t_\gamma\neq t_\delta$.
\item[(2)] If $\gamma<\delta$, $\mu<\beta\leq \delta$, and $\mu\neq\gamma$,
then $s_\gamma t_\delta\neq s_\mu t_\beta$.
\end{itemize}

The hypotheses are satisfied for $\delta=0$.
Let $E=\{e\in S:(\exists \mu<\beta\leq \alpha)(s_\mu e=s_\beta e)\}$.
For $\mu<\beta<\alpha$ and $\gamma<\alpha$ let
$A_{\gamma,\mu,\beta}=\{t\in S:s_\gamma t=s_\mu t_\beta\}$.
Then each $A_{\gamma,\mu,\beta}$ is a left solution set.
Pick $$\textstyle t_\alpha\in C\setminus(\{t_\gamma:\gamma<\alpha\}\cup 
E\cup\{\bigcup_{\gamma<\alpha}\bigcup_{\beta<\alpha}\bigcup_{\mu<\beta}A_{\gamma,\mu,\beta})\,.$$

Hypothesis (1) is trivially satisfied and if $\mu<\beta<\alpha$ and
$\gamma<\alpha$, then $t_\alpha\notin A_{\gamma,\mu,\beta}$ so
$s_\gamma t_\alpha\neq s_\mu t_\beta$. If
$\mu<\beta=\alpha$ and $\gamma<\alpha$, then $t_\alpha\notin E$ so
$s_\gamma t_\alpha\neq s_\mu t_\beta$.

Let $B=\{t_\alpha:\alpha<\kappa\}$ and let $p\in\overline B\cap U_\kappa(S)$.
To see that $p$ is right cancelable in $\beta S$, let $q\neq r\in \beta S$ and suppose
that $qp=rp$.  Pick subsets $C$ and $D$ of $S$ such that $C\cap D=\emp$ and 
$C\in q$ and $D\in r$. Then $H=\{s_\gamma t_\alpha:\gamma<\alpha\hbox{ and }s_\gamma\in C\}
\in qp$.  (To see this, let $s_\gamma\in C$. Then $\{t_\alpha:\gamma<\alpha<\kappa\}\subseteq s_{\gamma}^{-1}H$.)
Similarly, $\{s_\mu t_\beta:\mu<\beta\hbox{ and }s_\mu\in D\}
\in rp$.  Since these sets are disjoint by hypothesis (2), we have a contradiction.

The fact that $T_p$ is surjective follows from Theorem \ref{rtcancimpsur}.
\end{proof}

\begin{lemma}\label{chianotrec} Let $S$ be a cancellative semigroup, let 
$a\in S$, and let $(Y,\langle T_s\rangle_{s\in S})$
be the dynamical system of Lemma {\rm \ref{stotwodyn}} determined by $R=S$.
If $x$ is the characteristic function of $\{a\}$ in $Y$, then
$x$ is not a recurrent point.
\end{lemma}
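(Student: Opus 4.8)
The plan is to refute recurrence directly from Definition \ref{defrecur} by exhibiting a \emph{single} neighborhood $V$ of $x$ whose return-time set $\{s\in S:T_s(x)\in V\}$ is finite; since recurrence requires this set to be infinite for \emph{every} neighborhood of $x$, producing one such $V$ suffices. The natural candidate is the subbasic open set $V=\pi_a^{-1}[\{1\}]=\{y\in Y:y(a)=1\}$, which is a neighborhood of $x$ precisely because $x(a)=1$.

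First I would record the coordinatewise description of the maps $T_s$. By Lemma \ref{stotwodyn}, $T_s(y)=y\circ\rho_s$, so for every $t\in S$ we have $T_s(y)(t)=y(ts)$; in particular $T_s(x)(a)=x(as)$. Hence $T_s(x)\in V$ if and only if $x(as)=1$, which, as $x$ is the characteristic function of $\{a\}$, happens exactly when $as=a$.

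It then remains to bound $\{s\in S:as=a\}$. Here I would invoke left cancellation: if $as=a=as'$, then $s=s'$, so this set contains at most one element and is therefore finite. Consequently $\{s\in S:T_s(x)\in V\}$ is finite, and $x$ fails the recurrence condition. There is essentially no obstacle in this argument; the only points needing care are checking that $V$ is genuinely a neighborhood of $x$ (immediate from $x(a)=1$) and noting that only left cancellation, rather than the full cancellativity hypothesis, is actually used.
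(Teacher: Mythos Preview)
Your proof is correct and follows essentially the same route as the paper: both take the subbasic neighborhood $V=\pi_a^{-1}[\{1\}]$ and reduce the return set to $\{s\in S:as=a\}$, then show this set has at most one element. Your uniqueness argument (from $as=a=as'$ cancel $a$ on the left) is actually more economical than the paper's, which argues that any such $s$ must be a two-sided identity using both left and right cancellation; as you observe, left cancellation alone suffices.
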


\begin{proof} We claim that $|\{s\in S:T_s(x)(a)=1\}|\leq 1$.
Indeed, if $x(as)=1$, then $as=a$ so by left cancellation, $s$ 
is a left identity for $S$ and then by right cancellation, $s$ 
is a two sided identity for $S$.\end{proof}

We have seen that $U(x)$ is always a left ideal of $\beta S$ and that $NS$ is a right
ideal of $\beta S$ provided it is nonempty.

\begin{theorem}\label{NSnotleft} Let $S$ be a countable cancellative semigroup.
Let $(Y,\langle T_s\rangle_{s\in S})$
be the dynamical system of Lemma {\rm \ref{stotwodyn}} determined by $R=S$.
Then $NS_Y$ is not a left ideal of $\beta S$.
\end{theorem}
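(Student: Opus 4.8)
The plan is to exhibit a single point $p\in NS_Y$ together with an element $r\in\beta S$ for which $rp\notin NS_Y$; this immediately shows that $NS_Y$ is not closed under left multiplication, hence is not a left ideal. The mechanism is that left multiplication will be used to convert a non-surjective map into a surjective one: Theorem \ref{rpcancel} manufactures right cancelable elements of the precise form $rp$, and Theorem \ref{rtcancimpsur} converts right cancelability of $rp$ into surjectivity of $T_{rp}$.

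Carrying this out, I would first secure a starting point $p$. Since $S$ is cancellative it is in particular right cancellative, so $S^*$ is a subsemigroup of $\beta S$, and being a compact right topological semigroup it contains idempotents. By Lemma \ref{chianotrec} the characteristic function of a singleton is a non-recurrent point of $Y$, so Lemma \ref{idemNS} gives that every idempotent of $S^*$ lies in $NS_Y$; in particular $NS_Y\neq\emp$. What I actually need is an idempotent $p\in S^*$ lying \emph{outside} $K(\beta S)$. Granting such a $p$, the argument closes quickly: because $p\notin K(\beta S)$ and $S$ is countable and cancellative, Theorem \ref{rpcancel} supplies an infinite set $D\subseteq S$ such that $rp$ is right cancelable in $\beta S$ for every $r\in D^*$. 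Choosing any $r\in D^*$ (which is nonempty since $D$ is infinite), Theorem \ref{rtcancimpsur} shows $T_{rp}\colon Y\to Y$ is surjective, so $rp\notin NS_Y$. Thus $p\in NS_Y$ while $rp\notin NS_Y$, and $NS_Y$ is not a left ideal.

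The one step requiring care, and the main obstacle, is the existence of an idempotent of $S^*$ outside $K(\beta S)$, i.e.\ a non-minimal idempotent. It is essential that the point be of this special type: not merely any point of $\beta S\setminus K(\beta S)$ will do, since such a point may well be right cancelable and hence, by Theorem \ref{rtcancimpsur}, surjective, so that it fails to lie in $NS_Y$ and Theorem \ref{rpcancel} cannot be fed a point of $NS_Y$ in the first place. An idempotent resolves this tension, being non-right-cancelable (so, via Lemma \ref{idemNS}, in $NS_Y$) while potentially lying outside $K(\beta S)$. That $S^*$ contains non-minimal idempotents for every infinite cancellative $S$ is a standard fact \cite{HS}; since the idempotents of $K(\beta S)$ are exactly the minimal ones, any non-minimal idempotent lies in $S^*\setminus K(\beta S)$, supplying the required $p$ and completing the construction.
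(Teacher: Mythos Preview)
Your proposal is correct and follows essentially the same route as the paper's proof: pick an idempotent $p\in S^*\setminus K(\beta S)$ (the paper cites \cite[Corollary~6.33]{HS} for this), use Lemma~\ref{chianotrec} together with Lemma~\ref{idemNS} to place $p$ in $NS_Y$, then invoke Theorem~\ref{rpcancel} to obtain $r$ with $rp$ right cancelable and Theorem~\ref{rtcancimpsur} to conclude $rp\notin NS_Y$. Your added commentary on why an idempotent is the right kind of seed point is accurate and helpful, but the underlying argument is the same.
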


\begin{proof} By \cite[Corollary 6.33]{HS} pick an idempotent $p\in \beta S\setminus K(\beta S)$.
By Theorem \ref{rpcancel} pick $r\in\beta S$ such that $rp$ is right cancelable in $\beta S$.
By Lemma \ref{chianotrec} and Theorem \ref{idemNS}, $p\in NS$ and by 
Theorem \ref{rtcancimpsur}, $rp\notin NS$.
\end{proof}

If $S$ is commutative, then by \cite[Exercise 4.4.9]{HS} and Theorem \ref{NSright},
if $NS\neq\emp$, then $\cl NS$ is a two sided ideal of $\beta S$.  The following 
theorem shows that this may fail if $S$ is not commutative.

\begin{theorem}\label{freesg} Let $S$ be the free semigroup on the alphabet $\{a,b\}$ (where $a\neq b$). 
Let $(Y,\langle T_s\rangle_{s\in S})$
be the dynamical system of Lemma {\rm \ref{stotwodyn}} determined by $R=S$.
Then $NS\neq\emp$ and $\cl NS$ is not a left ideal of $\beta S$.
\end{theorem}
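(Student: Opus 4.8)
The plan is to treat the two assertions separately, the first being routine and the second being the heart of the matter. For $NS\neq\emp$: the free semigroup $S$ is cancellative, so $S^*$ is a subsemigroup of $\beta S$ and hence contains an idempotent $p$. Fixing any $a\in S$, Lemma~\ref{chianotrec} says the characteristic function of $\{a\}$ in $Y$ is not a recurrent point, so Lemma~\ref{idemNS} gives $p\in NS$.

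For the second assertion I would first reformulate $NS$. Since $S$ is left cancellative, every element of $S$ is left cancelable in $\beta S$ (\cite[Lemma~8.1]{HS}); in particular $aq\neq ar$ whenever $q\neq r$, so the hypothesis of Corollary~\ref{rtcanciffpsurY} holds and $NS=\{p\in\beta S:p\text{ is not right cancelable in }\beta S\}$. By Lemma~\ref{NSright} and continuity of each $\rho_q$, $\cl NS$ is a closed right ideal of $\beta S$; so to prove it is not a left ideal it suffices to exhibit $p\in NS$ and $q\in\beta S$ with $qp\notin\cl NS$. Now $S$ is very weakly left cancellative and, being cancellative, the auxiliary cardinality hypothesis of Theorem~\ref{vwlckappa} holds vacuously (no $e$ satisfies $se=te$ for $s\neq t$); thus there is a dense open $W\subseteq S^*$ consisting of right cancelable elements. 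As $W$ is open and $W\cap NS=\emp$, we get $W\cap\cl NS=\emp$, so it is enough to produce $p\in NS$ and $q$ with $qp\in W$.

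The remaining step, and the main obstacle, is to show that left multiplication can drive a point of $NS$ into the \emph{interior} $W$ of the set of right cancelable elements. I would pick an idempotent $p\in\beta S\setminus K(\beta S)$ (\cite[Corollary~6.33]{HS}); since a free semigroup has no idempotents, $p\in S^*$, whence $p\in NS$ by the first paragraph and $p\notin K(\beta S)$. I would then build a right cancelable $q$ with $qp\in W$. Observe first that $qp$ right cancelable forces $q$ right cancelable, since $\rho_{qp}=\rho_p\circ\rho_q$ and a non-injective $\rho_q$ makes $\rho_{qp}$ non-injective; so $q$ must be chosen right cancelable and with $\rho_p$ injective on $\beta Sq$, which is exactly the configuration produced by the construction of Theorem~\ref{rpcancel}.

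The genuinely new demand is that $qp$ land not merely among the right cancelable elements but inside the certifying dense open set $W$, that is, that the ultrafilter $qp$ have a member $B$ satisfying the separation conditions that appear in the proof of Theorem~\ref{vwlckappa}. I expect to obtain this by running a single induction that simultaneously enforces the suffix-separation needed for right cancelability of $qp$ (as in Theorem~\ref{rpcancel}) and the prefix-indexed separation that certifies a right cancelable neighborhood (as in Theorem~\ref{vwlckappa}). Here the presence of \emph{two} free generators is essential: it provides enough room to keep the prefix contributions coming from $q$ disjoint from the collision encoded in the idempotent $p$, so that $qp$ is robustly right cancelable even though $p$ is not; this is precisely what is unavailable in the commutative setting, where $\cl NS$ is forced to be a two sided ideal. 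The delicate part, which I expect to require the most care, is arranging the bookkeeping of this joint induction so that both families of separation conditions survive at every stage and the resulting set $B$ genuinely lies in $qp$.
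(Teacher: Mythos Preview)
Your treatment of $NS\neq\emp$ is fine and matches the paper. Your reduction for the second part is also sound: identifying $NS$ with the non--right-cancelable elements via Corollary~\ref{rtcanciffpsurY}, and observing that it suffices to push some $p\in NS$ into an open set of right cancelable elements by left multiplication. The problem is that you stop at exactly the point where the content lies. The ``joint induction'' merging the constructions of Theorems~\ref{rpcancel} and~\ref{vwlckappa} is not carried out, and as you yourself note, the genuine difficulty is ensuring that the resulting set lands in $qp$ rather than merely in $q$ or in $p$. Nothing in those two theorems, separately or together, hands you control over which sets belong to a \emph{product} $qp$ for a prescribed idempotent $p$; that bookkeeping is not routine, and your sketch gives no mechanism for it.

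The paper bypasses this entirely by exploiting the concrete combinatorics of the free semigroup instead of the abstract machinery. It fixes the idempotent $p$ to satisfy $\{a^n:n\in\ben\}\in p$, so that automatically $B=\{ba^n:n\in\ben\}\in bp$, and then shows directly that $\overline B\cap NS=\emp$. The point is that $b$ acts as a delimiter: for any $q\in\overline B$ one checks via Theorem~\ref{Singroup} that $T_q$ is surjective by exhibiting a single preimage, namely $x=\cchi_{\{aba^n:n\in\ben\}}$, which satisfies $T_q(x)=\cchi_{\{a\}}$ because $yba^n=aba^m$ forces $y=a$. Thus $\overline B$ is itself the open neighborhood witnessing $bp\notin\cl NS$, and no inductive construction is needed. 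Your route through Theorem~\ref{vwlckappa} discards precisely the structural feature (the second generator as a marker) that makes the free semigroup example work, and then tries to recover it by a harder abstract argument.
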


\begin{proof} Let $p$ be an idempotent in $\beta S$ with $\{a^n:n\in\ben\}\in p$.
By Lemma \ref{chianotrec} and Theorem \ref{idemNS}, $p\in NS$.  We will show that
$bp\notin\cl NS$.  Let $B=\{ba^n:n\in\ben\}$. Then $B\in bp$. We shall show that
$\overline B\cap NS=\emp$.  So let $q\in \overline B$.
Let $s_0=a$ and let $\langle s_n\rangle_{n=1}^\infty$ enumerate $S\setminus\{a\}$ so that
if the length of $s_i$ is less than the length of $s_j$, then $i<j$.
By Theorem \ref{Singroup}, to see that $T_q$ is surjective, it suffices to show that
there is some $x\in Y$ such that $T_q(x)$ is the characteristic function of $\{a\}$.

Let $x$ be the characteristic function of $\{aba^n:n\in\ben\}$ in $Y$.
Let $U$ be a neighborhood of $\cchi_{\{a\}}$ and pick $F\in \pf(S\setminus\{a\})$
such that $\pi_a^{-1}[\{1\}]\cap\bigcap_{y\in F}\pi_{y}^{-1}[\{0\}]\subseteq U$.
It suffices to show that $B\subseteq\{w\in S:T_w(x)\in \pi_a^{-1}[\{1\}]\cap\bigcap_{y\in F}\pi_{y}^{-1}[\{0\}]\}$.
So let $ba^n\in B$. Then $T_{ba^n}(x)(a)=x(aba^n)=1$ and for 
$y\in F$, $T_{ba^n}(x)(y)=x(yba^n)=0$.
\end{proof}

We remark that Theorem \ref{freesg} remains
valid if $S$ is the free semigroup on a countably infinite alphabet.

\bibliographystyle{plain}

\end{document}